  \newtheorem{thm}{Theorem}[section]
  \newtheorem{lem}[thm]{Lemma}
  \newtheorem{prop}[thm]{Proposition}
   \newtheorem{prob}[thm]{Problem}
  \newtheorem{cor}[thm]{Corollary}
  \theoremstyle{definition}
  \newtheorem{defn}{Definition}[section]
  \newtheorem{exmp}{Example}[section]
  \theoremstyle{remark}
  \newtheorem*{rem}{Remark}
\newcommand{\abs}[1]{\left|#1\right|}
\title{A Reshetnyak type theorem for quasiregular values on Carnot groups of $H$-type}
\author{Deguang Zhong \\Institute of Applied Mathematics, Shenzhen Polytechnic University,\\
Shenzhen, Guangdong, 518055, P. R. China\\}
\date{}
\chardef\bslchar=`\\ 
\providecommand{\qedsymbol}{\leavevmode
  \hbox to.77778em{%
  \hfil\vrule
  \vbox to.675em{\hrule width.6em\vfil\hrule}%
  \vrule\hfil}}
\gdef\?#1>{{\normalfont$\langle$\textit{#1}$\rangle$}}
\gdef\0{\relax}
\def\<#1>{{\normalfont$\langle$\textit{#1}$\rangle$}}
\def\latex/{{\protect\LaTeX}}
\begin{document}
\maketitle
\begin{abstract}
In this paper, we establish a Reshetnyak type theorem for quasiregular values on the setting of Carnot group of $H$-type.
\end{abstract}
\begingroup
\small
\tableofcontents
\endgroup


\newpage 

\section{Introduction}
 A  deep theorem of Reshetnyak states that each nonconstant quasiregular map in a domain of $\mathbb{R}^{n}$ is discrete, open and sense-preserving. Recently, Kangasniemi and  Onninen \cite[Theorem 1.2]{KO222} extended this result to the case of quasiregular value on Euclidean space. Our work of this paper showed that there is still valid an analogical Reshetnyak type theorem for quasiregular value on the setting of Carnot group of $H$-type.  Actually, we did a bit more. We first introduce a kinds of generalization of quasiregular maps, which is called the generalized finite distortion maps, on  Carnot group.  Then, we established the continuity for a subclass of generalized finite distortion maps on Carnot group of $H$-type. This allows us to use the tool of topological degree to study the Sobolev maps which have quasiregular values.  
  For the continuity of quasiregular values or generalized finite distortion mappings on Euclidean space have been established by  Kangasniemi and  Onninen \cite{KO221},  and 
 Dole\v{z}alov\'{a},  Kangasniemi and  Onninen \cite{DKO24}. The  result we obtained in Theorem \ref{6666312} is non-Euclidean. Actually, we further showed it is locally H\"{o}lder continuous and the explicit H\"{o}lder exponent was obtained, see Theorem \ref{thm20}. It is possibly of independent interest in PDEs and geometry analysis in Carnot group of $H$-type. Our first result of this paper is read as follows.
 \begin{thm}\label{6666312}
 Suppose that  $1/p+1/q<1$ with $1<p,q\leq\infty$ and $\mathbb{G}$ is a Carnot group of $H$-type. Let $\Omega\subset \mathbb{G}$ be a domain and $f:\Omega\rightarrow \mathbb{G}$ be a mapping of class $W_{loc}^{1,Q}(\Omega)$ satisfying  the following inequality
\begin{equation}\label{----1}
\vert D_{h}f(x)\vert^{Q}\leq K(x)J_{f}(x)+\Sigma(x)
\end{equation}
for almost every $x\in\Omega.$ If $K\in L_{loc}^{p}(\Omega),$ $\Sigma/K\in L_{loc}^{q}(\Omega)$ and $K$ satisfies $(\mathcal{K},Q-1)$-spherical condition on $\Omega,$
 then $f$ has a continuous representative. 
\end{thm}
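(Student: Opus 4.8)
The plan is to prove continuity through a quantitative oscillation estimate on the Carnot--Carath\'eodory spheres $S(x_0,\rho)=\partial B(x_0,\rho)$, combining the distortion inequality \eqref{----1} with an oscillation inequality on balls and the $(\mathcal K,Q-1)$-spherical condition. The essential difficulty to keep in mind throughout is that $f\in W^{1,Q}_{loc}$ sits at the \emph{borderline} exponent, homogeneous dimension $Q$, so the naive Morrey--Sobolev embedding in $\mathbb G$ does not yield continuity. The whole gain must therefore come from converting the horizontal gradient energy into the a.e.-controllable quantities $J_f$ and $\Sigma/K$ via \eqref{----1}, and from the interplay of exponents forced by $1/p+1/q<1$.

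First I would set up coarea/polar integration adapted to the $H$-type structure, so that for a.e.\ radius $\rho\in(0,R)$ the restriction $f|_{S(x_0,\rho)}$ lies in $W^{1,Q}(S(x_0,\rho))$ with tangential horizontal gradient dominated by $\abs{D_hf}$, and $\int_0^R\!\big(\int_{S(x_0,\rho)}\abs{D_hf}^Q\big)\,d\rho<\infty$. The key algebraic step is to split $\abs{D_hf}^{Q-1}=\big(\abs{D_hf}^{Q}/K\big)^{(Q-1)/Q}K^{(Q-1)/Q}$ and apply H\"older with exponents $Q/(Q-1)$ and $Q$; invoking $\abs{D_hf}^{Q}/K\le J_f+\Sigma/K$ from \eqref{----1} this gives, on each admissible sphere,
\[
\int_{S(x_0,\rho)}\abs{D_hf}^{Q-1}\,d\mathcal H^{Q-1}\le\Big(\int_{S(x_0,\rho)}\big(J_f+\Sigma/K\big)\,d\mathcal H^{Q-1}\Big)^{\frac{Q-1}{Q}}\Big(\int_{S(x_0,\rho)}K^{\,Q-1}\,d\mathcal H^{Q-1}\Big)^{\frac1Q}.
\]
Here the first factor is the part that can be made small and the second is the one governed by the spherical condition.

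Next I would feed this into an oscillation estimate of Mostow--Morrey type, in which the oscillation of $f$ over a ball is controlled by a tangential-energy integral carrying a gain that diverges as $\rho\to0$ (playing the role of the logarithmic factor in the classical monotone case). Using a good-radius (pigeonhole) selection inside each dyadic annulus $B(x_0,\rho_j)\setminus B(x_0,\rho_{j+1})$ with $\rho_j=2^{-j}R$, I would pass from the spherical integrals to solid integrals over the annuli, so that the factor $\int(J_f+\Sigma/K)$ is controlled by $\int_{B(x_0,\rho_j)}\!\big(J_f+\Sigma/K\big)$, which tends to $0$ as $j\to\infty$ by absolute continuity of the integral (using $J_f\in L^1_{loc}$, since $\abs{J_f}\le c\,\abs{D_hf}^{Q}$, and $\Sigma/K\in L^q_{loc}$). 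The factor $\int K^{\,Q-1}$ is kept under control by the $L^p_{loc}$ bound together with the $(\mathcal K,Q-1)$-spherical condition, which is exactly what guarantees that the divergent gain survives and that the telescoping sum bounding $\operatorname{osc}_{B(x_0,r)}f$ converges and tends to $0$ as $r\to0$. This produces a locally uniform modulus of continuity, hence a continuous representative; quantifying it gives the H\"older refinement stated separately.

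The main obstacle is precisely the borderline exponent: at $s=Q$ the Sobolev embedding fails, so everything hinges on matching the H\"older exponents to the integrability $1/p+1/q<1$ so that the dyadic series converges \emph{despite} the divergence of $\int_0^R d\rho/\rho$; equivalently, the $(\mathcal K,Q-1)$-spherical condition must beat the logarithmic loss from radial integration. A secondary but nontrivial technical point is carrying out the coarea formula, the tangential horizontal gradient, and the oscillation/monotonicity estimate in the genuinely sub-Riemannian $H$-type geometry rather than on Euclidean spheres.
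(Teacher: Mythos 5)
Your proposal contains the paper's key algebraic ingredient but assembles it in the wrong direction, and the two tools it ultimately leans on are exactly the ones that are unavailable in this setting. The paper's proof (Lemma \ref{lem0} together with Proposition \ref{09888888111}) runs the sphere--ball interplay the opposite way: it invokes Vodopyanov's isoperimetric inequality (Lemma \ref{iso}),
\[
\abs{\int_{B(x,r)}J_{f}}\leq I\left(\int_{S(x,r)}\vert D^{\sharp}f\vert\right)^{Q/(Q-1)},
\]
so that the \emph{solid} integral of $J_{f}$ is bounded by sphere energy; then precisely your H\"older splitting $\vert D_{h}f\vert^{Q-1}=(\vert D_{h}f\vert^{Q}/K)^{(Q-1)/Q}K^{(Q-1)/Q}$, combined with the $(\mathcal{K},Q-1)$-spherical condition and the coarea formula $\varphi'(r)=\int_{S(x,r)}\vert D_{h}f\vert^{Q}/K$, converts the distortion inequality into the differential inequality $\varphi(r)\leq I\mathcal{K}\,r\,\varphi'(r)+A\,r^{Q(1-1/p-1/q)}$ for $\varphi(r)=\int_{B(x,r)}\vert D_{h}f\vert^{Q}/K$. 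Integrating this Gronwall-type inequality yields a \emph{quantitative} Morrey decay $\varphi(r)\lesssim r^{\alpha}$ (with a logarithmic factor in the borderline case), and continuity follows from the Poincar\'e inequality on Carnot groups via a chaining argument applied to the coordinate functions $f_{j}$, using $\vert\nabla_{h}f_{j}\vert\leq C\vert D_{h}f\vert$. Your plan inverts this: you bound sphere integrals by solid integrals of $J_{f}+\Sigma/K$ through a pigeonhole choice of radii, and then appeal to absolute continuity of the integral for smallness. This destroys the self-improving ODE structure, and absolute continuity gives only qualitative smallness with no rate in $j$, so your dyadic telescoping series need not converge and no modulus of continuity follows. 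You yourself note that the spherical condition ``must beat the logarithmic loss,'' but you supply no mechanism for this; the mechanism in the paper is the isoperimetric inequality, which never appears in your argument.

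The second, independent gap is the ``oscillation estimate of Mostow--Morrey type'' on CC-spheres. This presupposes a Sobolev embedding theorem on spheres, and the paper states explicitly that no directly applicable such embedding is known on Carnot groups of $H$-type --- the available substitute (\ref{shpere2}) controls oscillation only through maximal functions of $\vert D_{h}f\vert$ on spheres and is not usable here; circumventing this is the declared novelty of the paper. Moreover, even granting such an embedding, your key displayed inequality produces $\int_{S}\vert D_{h}f\vert^{Q-1}$, which is subcritical on a codimension-one sphere and cannot control oscillation even in $\mathbb{R}^{n}$ (the Euclidean estimate needs the full power $\int_{S}\vert Df\vert^{n}$); and passing from sphere oscillation to ball oscillation would additionally require a monotonicity-type principle, which is unavailable because the hypotheses allow $J_{f}$ to be negative on sets of positive measure.
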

   According to Theorem \ref{6666312}, we can deduce that the mapping mentioned in Theorem \ref{00990099199} below is continuous; see Corollary \ref{cor20}.   This allows us to use the tool of topological degree
  to establish the following Reshetnyak type theorem for quasiregular values on Carnot group of $H$-type. 
  \begin{thm}\label{00990099199}
Let $K\geq1$ be a given constant, $\Omega\subset\mathbb{G}$ be a domain of Carnot group $\mathbb{G}$ of $H$-type  and $y_{0}\in\mathbb{G}.$ Suppose that $f:\Omega\rightarrow \mathbb{G}$ is an nonconstant mapping of class $W_{loc}^{1,Q}(\Omega)$  satisfying  the following inequality
\begin{equation}\label{---1}
\vert D_{h}f(x)\vert^{Q}\leq KJ_{f}(x)+\Sigma(x)(\rho\circ l_{y_{0}^{-1}}\circ f(x))^{Q}
\end{equation}
for almost every $x\in\Omega.$ If $\Sigma\in L_{loc}^{p}(\Omega)$ for some $p>1,$  then $f^{-1}\{y_{0}\}$ is discrete, the local index $i(x,f)$ is positive for every $x\in f^{-1}\{y_{0}\}$, and for every neighborhood $V$ of every  $x\in f^{-1}\{y_{0}\},$  there has $y_{0}\in {\rm int}f(V).$
\end{thm}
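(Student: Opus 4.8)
The plan is to promote the continuity provided by Theorem~\ref{6666312} to the three topological conclusions by means of the topological degree, the crucial analytic input being that inequality \eqref{---1} makes $f$ sense-preserving in a gauge-neighbourhood of the value $y_0$. The $H$-type hypothesis is exactly what supplies this input: on a group of $H$-type the homogeneous gauge $\rho$ is, up to a constant, a power of Kaplan's fundamental solution of the sub-Laplacian, so that $\log\rho$ is $Q$-harmonic off the origin.

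First I would fix the continuous representative of $f$ granted by Corollary~\ref{cor20}; it is even locally H\"older continuous by Theorem~\ref{thm20}. The universal bound $|J_f|\le C|D_hf|^Q$ together with $f\in W^{1,Q}_{loc}(\Omega)$ gives $J_f\in L^1_{loc}(\Omega)$, so for $U\Subset\Omega$ with $y_0\notin f(\partial U)$ the degree $\deg(f,U,y_0)$ is defined, equals a constant integer $N$ on the gauge ball $B_\rho(y_0,t)=\{y:\rho(y_0^{-1}y)<t\}$ whenever $t<\operatorname{dist}_\rho(y_0,f(\partial U))$, and satisfies
\begin{equation*}
\int_{\mathbb{G}}\varphi(y)\,\deg(f,U,y)\,dy=\int_U\varphi(f(x))\,J_f(x)\,dx,\qquad\varphi\in C_0\bigl(\mathbb{G}\setminus f(\partial U)\bigr),
\end{equation*}
so that $N\,|B_\rho(y_0,t)|=\int_{\{g<t\}\cap U}J_f$, where $g=\rho\circ l_{y_0^{-1}}\circ f$ and $f^{-1}\{y_0\}=\{g=0\}$. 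Everything reduces to showing that $N>0$ for small $U$ about a preimage point and that $f^{-1}\{y_0\}$ is locally finite.

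The main construction is the auxiliary potential $w=\log(1/g)=-\log\rho(y_0^{-1}\cdot f)$. Pulling the $Q$-superharmonic function $-\log\rho$ back through $y_0^{-1}\cdot f$ and using \eqref{---1} to absorb the deviation term $\Sigma\,g^Q$ --- which is precisely the failure of exact quasiregularity and is negligible where $g$ is small --- I would verify that $w$ is a weak supersolution, $\operatorname{div}\mathcal{A}(x,D_hw)\le0$, of a $Q$-Laplace-type operator on $\{0<g<\varepsilon\}$, with $w\to+\infty$ exactly on $f^{-1}\{y_0\}$. Since an $\mathcal{A}$-superharmonic function is continuous, hence finite, wherever its Riesz measure vanishes, the singularity of $w$ carries strictly positive Riesz mass near each $x_0\in f^{-1}\{y_0\}$; identifying this mass with the local degree of $f$ shows that $f$ is sense-preserving near $y_0$ and that $N>0$.

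The main obstacle is discreteness of $f^{-1}\{y_0\}$. As the $+\infty$-set of the supersolution $w$, it has zero $Q$-capacity, and since nondegenerate continua carry positive $Q$-capacity this already makes $f^{-1}\{y_0\}$ totally disconnected, i.e.\ $f$ is light. Upgrading lightness to discreteness is the delicate step: I would combine the finiteness $N<\infty$ of the degree with a multiplicity count --- for a.e.\ $y\in B_\rho(y_0,t)$ the fibre $f^{-1}\{y\}\cap U$ has at most $N$ points, since $N>0$ forces every local index to be at least one --- and then transfer the bound to the exceptional value $y_0$ through the minimum principle for $w$, which forbids its $+\infty$-set from accumulating. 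Once $f^{-1}\{y_0\}$ is discrete, choosing $U$ to isolate a single preimage gives $i(x_0,f)=N>0$, and the nonvanishing of the degree on $B_\rho(y_0,t)$ makes $f(x)=y$ solvable throughout that ball, whence $y_0\in{\rm int}f(V)$ for every neighbourhood $V$ of $x_0$. The genuinely technical work is all concentrated in the construction of $w$: the Sobolev chain rule for $\rho\circ l_{y_0^{-1}}\circ f$ on $\mathbb{G}$ and the measurability of $D_hw$, the explicit $H$-type computation that $-\log\rho$ is $\mathcal{A}$-harmonic and controls $D_hg$, and the uniform estimate, as $g\to0$, that the mere $L^p_{loc}$ integrability of $\Sigma$ with $p>1$ suffices to preserve the supersolution inequality.
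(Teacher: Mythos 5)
Your proposal founders at its central analytic step: the claim that $w=-\log\rho\circ l_{y_{0}^{-1}}\circ f$ is a weak supersolution, $\operatorname{div}\mathcal{A}(x,D_{h}w)\le 0$, of a $Q$-Laplace type operator on $\{0<g<\varepsilon\}$. The standard way to obtain this (pulling back the $\mathcal{A}$-harmonic function $-\log\rho$ through $f$, as in Reshetnyak's original argument and in Heinonen--Holopainen for Carnot groups) requires the morphism property of quasiregular maps, and that in turn requires $J_{f}\ge 0$ almost everywhere: the pullback operator is built from $D_{h}f$ and $J_{f}$, and its monotonicity/ellipticity comes precisely from the distortion inequality \emph{without} the inhomogeneous term. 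Under inequality (\ref{---1}) the Jacobian $J_{f}$ may be negative on sets of positive measure, including inside $\{0<g<\varepsilon\}$ --- the term $\Sigma(x)g^{Q}(x)$ has exactly the same homogeneity $g^{Q}$ as $\vert D_{h}f\vert^{Q}$ there, so it is not a "negligible deviation" that can be absorbed as $g\to 0$; where it dominates, no sign information on $J_{f}$ survives. Since sense-preservation is the conclusion you are trying to prove, assuming enough structure to build $\mathcal{A}$ is circular. This is precisely the obstruction that forces both Kangasniemi--Onninen in the Euclidean case and the present paper to abandon nonlinear potential theory: the paper instead uses the test-function identity of Basalaev--Vodopyanov (Lemma \ref{l00}, which holds with no sign assumption on $J_{f}$), the resulting gradient estimates for $\log^{+}\log^{+}\frac{1}{\rho}\circ f$ (Theorem \ref{th11000}), and then proves positivity of the degree (Lemma \ref{lemma000829992}) by a three-step argument whose engine is Gehring-type higher integrability plus the Morrey-type estimate of Proposition \ref{09888888111}, showing $\log\rho\circ l_{y_{0}^{-1}}\circ f\in L^{\infty}_{loc}$ would follow from $\deg(f,U)=0$, a contradiction at a preimage point.

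A second, independent gap is your discreteness step. Even granting that $f^{-1}\{y_{0}\}$ is the $+\infty$-set of a supersolution and hence of zero $Q$-capacity, such polar sets can certainly accumulate --- they can be countable dense sets --- so no minimum principle "forbids its $+\infty$-set from accumulating." Total disconnectedness (lightness) is genuinely weaker than discreteness, and the passage between them is where the real work lies. The paper does this with a degree-monotonicity contradiction (Theorem \ref{theorem9009}): if preimages of $y_{0}$ accumulated, one could produce nested components $U_{0}\supset U_{1}\supset U_{2}\supset\cdots$ with ${\rm deg}(f,U_{0})>{\rm deg}(f,U_{1})>{\rm deg}(f,U_{2})>\cdots$, each degree a positive integer by Lemma \ref{lemma000829992}, while ${\rm deg}(f,U_{0})$ is finite by the Jacobian formula of Lemma \ref{lomlm} --- impossible. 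Your multiplicity-count idea for a.e.\ regular value $y$ points in a similar direction, but as stated it transfers no bound to the exceptional fibre $f^{-1}\{y_{0}\}$ itself; that transfer is exactly what the nested-degree argument accomplishes. The parts of your proposal that do align with the paper --- continuity via Theorem \ref{6666312}/Corollary \ref{cor20}, the degree formula, and zero capacity giving total disconnectedness (cf.\ Lemma \ref{101209}) --- are sound in outline, but the two steps above are not repairable within the potential-theoretic framework you chose.
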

It is worth noting that  it still allows for $\Omega$ to have regions where $J_{f}$  is negative for Sobolev maps satisfying the inequality (\ref{---1}). This is one of reasons to cause the main difficulty to establish the discreteness, openness and sense-preserving type results for quasiregular values in Theorem \ref{00990099199}. The analogous results in the setting of Euclidean space were firstly established by Kangasniemi and  Onninen \cite[Theorem 1.2]{KO222}. The works in this paper on establishing most of those properties were inspired by \cite{KO222}.  The main difference is that: in \cite{KO222},  a Logarithmic isoperimetric inequality was established to derive the positivity of the degree; see \cite[Lemma 4.1]{KO222}. The establishment of this Logarithmic isoperimetric inequality heavily relies  on Sobolev embedding theorem on spheres on Euclidean space. To the best of our knowledge, however, there is no directly applicable of Sobolev embedding theorem on spheres in the context of Carnot group of $H$-type. Thus, the novelty of this paper is that we use the higher-integrability for quasiregular values on Carnot group of $H$-type to overcome this difficulty.
 
  $\mathbf{The\;development \;on\;quasiregular\;maps}.$ From the perspective of mappings, the concept of generalized finite distortion mapping stems from the holomorphic mapping $f$, which satisfies the equation $\vert Df(x)\vert^{2}=J_{f}(x)$ on its domain in $\mathbb{C}$ and has many beautiful properties in complex analysis.  However, the classical Liouville theorem tells us that the analogous holomorphic mappings $f$ in Euclidean space, namely satisfy $\vert Df(x)\vert^{n}=J_{f}(x)$, are extremely rare. They are just the restriction of a M\"{o}bius transformation on their domain. One kinds of more unrestricted mappings, which called quasiregular maps, in Euclidean space was introduced by Yu. G. Reshetnyak in 1967 \cite{Res67}, with significant early contributions by Martio, Rickman and V\"{a}is\"{a}l\"{a} 
  \cite{MRV69,MRV70,MRV71}. Its definition is given by the following.
  \begin{defn}{\rm\cite{Res67}}\label{019001}
  For a given constant $K\geq1$ and $n\geq2.$ Suppose that $\Omega\subset\mathbb{R}^{n}$  is an open connected set. Then, a mapping $f:\Omega\rightarrow\mathbb{R}^{n}$ is called quasiregular or a mapping of bounded distortion if it belongs to $W_{loc}^{1,n}(\Omega, \mathbb{R}^{n})$ and satisfies the following inequality 
  $$\vert Df(x)\vert^{n}\leq KJ_{f}(x)$$
  for almost every $x\in\Omega.$
  \end{defn}
We recommend the  interested readers to see  the work of  Ahlfors \cite{Ahl35} and  Gr\"{o}tzsch \cite{ Grö28}, where the theory of quasiregular maps originates from the planar setting. More properties on quasiregular mappings can also be found in the excellent literatures \cite{Res89, Ric93, Vuo88}.  The quasiregular mappings on other non-Euclidean space were also studied, c.f. \cite{FLP16, HH97, GW16, GGWX24, OR09}. One of its generalization, which called quasiregular curves were recently studied by Pankka \cite{Pan20} on the setting of Riemannian manifolds and by Ikonen \cite{Iko23} where the range space is a singular spaces. If we require that the constant $K$ in the Definition \ref{019001} is replaced by a measure function $K:\Omega\rightarrow[1,\infty),$  then it leads to the definition of finite distortion mappings; c.f. \cite{HK14, IM01}.

 Let $K\geq1$ be a given constant.  Suppose that $\Omega\subset\mathbb{R}^{n},n\geq2,$  is a domain  and $\Sigma:\Omega\rightarrow[0,\infty)$ is a  measure function. Then, a mapping $f:\Omega\rightarrow\mathbb{R}^{n}$ belongs to the class of Sobolev space $W_{loc}^{1,n}(\Omega, \mathbb{R}^{n})$ and satisfies the following inequality 
 \begin{equation}\label{666y1}
\vert Df(x)\vert^{n}\leq KJ_{f}(x)+\Sigma(x)\vert f(x)\vert^{n}
\end{equation}
  for almost every $x\in\Omega,$ was investigated by Astala, Iwaniec and Martin \cite[Section 8.5]{ATM09}. The inequality  (\ref{666y1}) was called the heterogeneous distortion inequality in \cite{KO221}.

The H\"{o}lder regularity of planar maps satisfying  (\ref{666y1}), where the term $\Sigma\vert f\vert^{2}$  is replaced by a constant $K_{1}\geq0,$ 
 had already been studied by Nirenberg \cite{Nir53}, Finn and Serrin \cite{FS58}, and Hartman \cite{Har58}.  Simon \cite{Sim77} had also studied the analogous maps between surfaces in Euclidean space. Actually, the 2D solutions for mappings satisfying inequality (\ref{666y1}) has also attract by many interests. See, for example, the excellent works of Astala and P\"{a}iv\"{a}rinta \cite{AP06} on solving the Calder\'{o}n problem.
 
 Recently, the continuity and Liouville theorem  for mappings satisfying inequality (\ref{666y1}) in space were established by Kangasniemi and  Onninen \cite{KO221}. Later, they established the Reshetnyak’s theorem \cite{KO222}, Rickman’s Picard Theorem \cite{KO242},  linear distortion and rescaling properties \cite{KO241} for quasiregular values which defined by below.
 \begin{defn}\cite[Definition 1.1]{KO222} 
   Let $K\geq1$ be a given constant. Suppose that $\Omega\subset\mathbb{R}^{n}$  is a domain, $y_{0}\in\mathbb{R}^{n}$ and $\Sigma\in L_{loc}^{p}(\Omega)$ for some $p>1.$  Then, a mapping $f:\Omega\rightarrow\mathbb{R}^{n}$ has a $(K,\Sigma)$-quasiregular value at $y_{0}$ if it belongs to $W_{loc}^{1,n}(\Omega, \mathbb{R}^{n})$ and satisfies the following inequality
  \begin{equation}
  \vert Df(x)\vert^{n}\leq KJ_{f}(x)+\Sigma(x)\vert f(x)-y_{0}\vert^{n}
  \end{equation}
  for almost every $x\in\Omega.$
  \end{defn}

 More recently, Dole\v{z}alov\'{a},  Kangasniemi and Onninen \cite{DKO24} introduced and established the continuity for the Sobolev maps $f\in W_{loc}^{1,n}(\Omega,\mathbb{R}^{n})$ in Euclidean space which satisfying the following inequality
\begin{equation}\label{000032}
\vert Df(x)\vert^{n}\leq K(x) J_{f}(x)+\Sigma(x)
\end{equation}
for almost every $x\in\Omega.$ Here $K:\Omega\rightarrow[1,\infty)$ and $\Sigma:\Omega\rightarrow[0,\infty)$ are the measurable functions. The author \cite{Zho25} generalized this kinds of mappings satisfying inequality (\ref{000032}) to generalized finite distortion curves from Euclidean domain to Riemannian manifolds. Based on the developments for quasiregular maps and its generalizations discussed above, in this article we introduce the concept for analogous mappings satisfying inequality (\ref{000032}) on Carnot group; see Definition \ref{df1}. 

$\mathbf{The\;development \;on\;Reshetnyak’s\;theorem}.$ 
In nonlinear elasticity theory, analyzing the topological properties for distortion finite mappings $f$ in $\Omega\subset \mathbb{R}^{n}$ with distortion function $K$ has vital significance. A deep theorem of Yu. G. Reshetnyak \cite{Res89} states that each quasiregular map in a domain of $\mathbb{R}^{n}$ is discrete, open and sense-preserving. J. M. Ball \cite{Bal77,Bal81} showed that the quasiregularity is too strong for some problems of this area. Hence, it is natural to consider the case for $K\in L_{loc}^{p}$ for some $p>0.$ In \cite{Bal81}, J. M. Ball shown that there exists a finite distortion mapping with distortion function $K\in L_{loc}^{p},$ where 
$p<n-1,$  is not discrete. In 1993, Iwaniec and \v{S}ver\'{a}k \cite{IS93} proved that the Sobolev map $f\in W_{loc}^{1,2}(\Omega,\mathbb{C})$ with integrable distortion function is discrete and open. They also conjectured that  a Sobolev map $f\in W_{loc}^{1,n}(\Omega,\mathbb{R}^{n})$ is discrete and open if its distortion function $K\in L_{loc}^{n-1}(\Omega).$ Heinonen and Koskela \cite{HK93} showed that a quasi-light 
map $f\in W_{loc}^{1,n}(\Omega,\mathbb{R}^{n})$ is discrete and open if its distortion function $K\in L_{loc}^{p}(\Omega),$ where $p>n-1.$ 
Villamor and Manfredi \cite{VM98}  showed that above quasi-lightness can be removed.  Bj\"{o}rn \cite{Bjö} extended the result of Villamor and Manfredi \cite{VM98} to finite distortion map with its distortion function belongs to certain Orlicz space. Hencl and Mal\'{y} \cite{HM02} showed that a quasi-light mapping $f\in W_{loc}^{1,n}(\Omega,\mathbb{R}^{n})$ with its distortion function belongs to $L_{loc}^{n-1}(\Omega)$  is discrete and open. 
 Unexpectedly, as showed by Hencl and Rajala  \cite{HR13}, the conjecture of Iwaniec and \v{S}ver\'{a}k \cite{IS93} concerning discreteness is false even for Lipschitz mapping  with finite distortion belongs to $L_{loc}^{n-1}(\Omega).$  Recently, Kangasniemi and Onninen  \cite{KO222} established a single-value version of Reshetnyak's theorem. Namely, they showed that a non-constant continuous map $f\in W_{loc}^{1,n}(\Omega,\mathbb{R}^{n})$ satisfying the inequality
$$\vert Df(x)\vert^{n}\leq KJ_{f}(x)+\Sigma(x)\vert f(x)-y_{0}\vert^{n},$$
where $K\geq1$ is a constant, $y_{0}\in \mathbb{R}^{n}$ and $\Sigma\in L_{loc}^{1+\varepsilon}(\Omega),$ then $f^{-1}\{y_{0}\}$ is discrete, the local index $i(x,f)$ is positive in $f^{-1}\{y_{0}\}$, and every neighborhood of a point of $f^{-1}\{y_{0}\}$ is mapped to a neighborhood of $y_{0}.$ 

To extend the theorem of Reshetnyak \cite{Res89}  to the case of non-Euclidean spaces have also aroused  widespread interest.  Heinonen and  Holopainen \cite[Theorem 6.17]{HH97} showed that
for a Carnot group $\mathbb{G}$ of $H$-type, the quasiregular map $f:U\rightarrow\mathbb{G}$ is either constant or a sense-preserving, discrete, and open map.   Zapadinskaya \cite{Zap08} established the discreteness and openness for finite distortion mappings on Carnot groups of $H$-type  with distortion functions belong to certain Orlicz spaces.   Recently, Basalaev and  Vodopyanov \cite{BV23} proved that the finite distortion $f:\Omega\rightarrow \mathbb{G},$ where $ \mathbb{G}$ is a  Carnot group of $H$-type, is continuous, open, and discrete if its  the distortion function 
$K\in L_{loc}^{p}(\Omega), p>Q-1.$ In \cite{Kir16}, Kirsil\"{a}  established  Reshetnyak’s theorem to the case for mapping $f:X\rightarrow\mathbb{R}^{n},$ where $X$ is a generalized $n$-manifold satisfying assumptions such as Ahlfors $n$-regularity and Poincar\'{e} inequality. More recently, Meier and  Rajala \cite[Theorem 1.2]{MR25} extended Iwaniec-\v{S}ver\'{a}k theorem \cite{IS93} to the map $f: X\rightarrow\mathbb{R}^{2},$ where $X$ is a metric surface.

In the rest of this paper are arranged is as follows: In Sect. \ref{sec2222}, we will recall some notations and well known results on Carnot groups which  needed in this paper.  In Sect. \ref{sec255555},
we will establish the continuity for some subclass of generalized finite distortion maps on Carnot group $\mathbb{G}$ of $H$-type.  Actually, we showed that those mappings are locally H\"{o}lder continuous on Carnot group $\mathbb{G}$ of $H$-type and the explicit H\"{o}lder exponent were given. Some problems on  sharp and asymptotically sharp H\"{o}lder exponent were also discussed at the end of section. The Section \ref{sec3333} is devote to proving the discreteness for quasiregular values. In most case, we provide the auxiliary results on some subclass of generalized finite distortion maps in this section.
 In Sect. \ref{sec6}, the positivity of degree for quasiregular values $f$  was established. In Sect. \ref{sec7},  the openness type result for quasiregular values $f$  was obtained.

\section{Preliminary}\label{sec2222}
In this section, we will recall some facts on Carnot group. It should be pointed out that  $C(k_{1},k_{2},\ldots,k_{m})>0$ in this paper will be a constant that only depends on $k_{1},k_{2},\ldots,k_{m}$ but may vary from line to line.

 In this paper, a Carnot group $\mathbb{G}$  \cite{FS82, Pan89} is a connected, simply connected nilpotent Lie group with graded Lie algebra 
$\mathfrak{g}=V_{1}\oplus\cdots\oplus V_{m},$ where the vector subspaces $V_{i}$ satisfying $[V_{1},V_{i}]=V_{i+1},$ $i=1,\cdots,m-1,$ and 
$[V_{1},V_{m}]=0$ and ${\rm dim}\;V_{1}\geq2.$  Then, the homogeneous dimension of $\mathbb{G}$ is  defined by 
$$Q=\sum_{j=1}^{m}j{\rm dim}V_{j}.$$ 
 If $m=2,$ then $\mathbb{G}$ is called a two-step Carnot group.

Suppose that  $\mathbb{G}$ is a Carnot group with graded Lie algebra $\mathfrak{g}=V_{1}\oplus\cdots\oplus V_{m}.$ Then its subspace $V_{1}$ is a horizontal space of $\mathfrak{g},$ and the elements of $V_{1}$ are called the horizontal vector fields. Assume that ${\rm dim}V_{1}=n$ and fix a basis $X_{1},\cdots,X_{n}$ of $V_{1}$ and a left-invariant Riemannian metric $g$ on $\mathbb{G}$ such that $g(X_{i},X_{j})=\delta_{ij}.$ By extending $X_{1},\cdots,X_{n}$ to an orthonormal basic $X_{1},\cdots,X_{n}, T_{1},\cdots,T_{N-n}$ of $\mathfrak{g},$ where $N={\rm dim}\mathbb{G}$
is the topological dimension of $\mathbb{G}.$ Then, it is well known that the exponential map $\exp:\mathfrak{g}\rightarrow\mathbb{G}$ is a global diffeomorphism \cite{FS82}. Hence, an element $g\in \mathbb{G}$ will be identified with a point $(x_{1},\cdots,x_{N})\in \mathbb{R}^{N}$ if $$g=\exp\left(\sum_{i=1}^{n}x_{i}X_{i}+\sum_{i=n+1}^{N}x_{i}T_{i-n}\right).$$
In additional, it gives the bi-invanriant Haar measure on $\mathbb{G}$ by the push-forward of the Lebesgue measure of $\mathbb{R}^{N}$ by using of this exponential map. In this paper, we donate the  bi-invanriant Haar measure on $\mathbb{G}$ by $dx.$ Then, the symbal $m(E)$ stands for this bi-invanriant Haar measure of a set $E.$ 

An absolutely continuous path $\gamma:[0,1]\rightarrow\mathbb{G}$ is said to be horizontal if there exist real-value measurable functions $b_{1},b_{2},\cdots,b_{n}$ such that 
$$\dot{\gamma}(t)=\sum_{j=1}^{n}b_{j}(t)X_{j}(\gamma(t))$$
for almost every $t\in[0,1].$ Its length is defined by 
$$l(\gamma)=\int_{0}^{1}\left(\sum_{j=1}^{n}b_{j}^{2}(t)\right)^{\frac{1}{2}}dt.$$
Thus, the Carnot-Carath\'{e}odory distance $d_{c}(x,y)$ between two different points $x,y\in \mathbb{G}$ is defined by 
$$d_{c}(x,y)=\inf_{\gamma}l(\gamma),$$
where the infimum takes over all horizontal paths $\gamma$ with $\gamma(0)=x$ and  $\gamma(1)=y.$
It provides a left-invariant  Carnot-Carath\'{e}odory metric on $\mathbb{G};$ see \cite{Cho39, Str86}. With this distance, we donate $B(x,r)=\{y\in \mathbb{G}:d_{c}(x,y)<r\}$ the set of ball with radius $r$ which centered at $x.$ And the symbol $S(x,r)$ stands for its boundary. Namely,   $S(x,r)=\{y\in \mathbb{G}:d_{c}(x,y)=r\}.$ Define the dilation $\delta_{\lambda}$ on $\mathbb{G}$ by 
$$\delta_{\lambda}(x)=(\lambda x_{1},\lambda^{2}x_{2},\cdots,\lambda^{m}x_{m}).$$ Here $x=(x_{1},\cdots,x_{j},\cdots,x_{m}), x_{j}\in V_{j},j=1,2,\cdots,m.$
 They are the group automorphisms of  $\mathbb{G}$ for all $\lambda>0.$ In this paper, we normalize the condition on the bi-invanriant Haar measure in $\mathbb{G}$ by letting that the balls with radius 1 satisfying 
\begin{equation}\label{nc}
m(B(x,1))=\int_{B(x,1)}dx=1.
\end{equation}
Since the Jacobian determinant of $\delta_{\lambda}$ equals $r^{Q},$  we get by combing the normalized condition (\ref{nc}) that $m(B(x,r))=r^{Q}.$

 A homogeneous norm on $\mathbb{G}$ is defined by a continuous mapping $\rho:\mathbb{G}\rightarrow[0,\infty),$ which is smooth on $\mathbb{G}\setminus\{0\}$ and satisfies the following conditions:

($a$) $\rho(x)=0$ if and only if $x=0;$

($b$)  $\rho(x^{-1})=\rho(x)$ and $\rho(\delta_{\lambda}x)=\lambda\rho(x).$

It is well known that there exists $c_{1}\geq1,$ such that 
\begin{equation}\label{00oo}
\rho(xy)\leq c_{1}(\rho(x)+\rho(y))
\end{equation} for all $x,y\in\mathbb{G},$ and all homogeneous norm on $\mathbb{G}$ are equivalent; c.f. \cite{FS82}. That is to say there exist constants 
$0<C_{1}\leq C_{2}<\infty,$ such that for any two homogeneous norm $\rho_{1},\rho_{2}$ on $\mathbb{G},$ we have that $C_{1}\rho_{2}(x)\leq\rho_{1}(x)\leq C_{2}\rho_{2}(x).$ We donate the distance
 induced by a norm $\rho$ between two point $x,y\in\mathbb{G}$  as $$d_{\rho}(x,y)=\rho(y^{-1}x).$$

$\mathbf{Absolutiely\;continuous\;on\;lines}.$  Let $\mathbb{G}_{1}$ and $\mathbb{G}_{2}$ be two Carnot groups. Then a mapping $\phi:\Omega\subset\mathbb{G}_{1}\rightarrow\mathbb{G}_{2}$ is said to be absolutely continuous on lines if for every domain $U\subset\Omega$ with $\overline{U}\subset\Omega$ and for the fibering $\Gamma_{j},$ which generated by $X_{j},j=1,\cdots,n,$ $\phi$ is absolutely continuous on $\gamma\cap U$ for 
$d\gamma$-almost every curve $\gamma\subset\Gamma_{j}.$ It is well known that such mapping $\phi$ has the derivetives $X_{j}\phi$ along $X_{j},j=1,\cdots,n,$ for almost everywhere on $\Omega;$ \cite[Proposition 4.1]{Pan89}. 

$\mathbf{Sobolev\;space\;on\;Carnot\;group}.$  
We use the definition of Sobolev space on Carnot group from \cite{Vod23}. Suppose that $\mathbb{G}$ is a Carnot group with graded Lie algebra $\mathfrak{g}=V_{1}\oplus\cdots\oplus V_{m}$ and ${\rm dim}V_{1}=n.$
Let $\Omega$ be a domain in $\mathbb{G}.$ For $p\geq1,$ the symbol $L^{p}(\Omega)$ stands for the class consisting of all the $p$-integrable mappings 
$f:\Omega\rightarrow\mathbb{R}.$ The norm of a mapping $f\in L^{p}(\Omega)$ is defined by 
$$\| f\|_{L^{p}(\Omega)}=\left(\int_{\Omega}\vert f(x)\vert^{p}dx\right)^{\frac{1}{p}}.$$
We say $f\in L_{loc}^{p}(\Omega)$ if for each compact set $K\subset\Omega,$ we have $f\in L^{p}(K).$

 A real-valued function $f$ defined on a domain $\Omega\subset\mathbb{G}$  is said to lie in the Sobolev space $W^{1,p}(\Omega)$ with $p\geq1$ if $f$ is absolutely continuous on lines on $\Omega$ and its norm of $f\in W^{1,p}(\Omega)$ is defined by 
 $$\| f\|_{W^{1,p}(\Omega)}=\left(\int_{\Omega}\vert f(x)\vert^{p}dx\right)^{\frac{1}{p}}+\left(\int_{\Omega}\vert \nabla_{h}f(x)\vert^{p}dx\right)^{\frac{1}{p}}$$
is finite, where the symbol  $\nabla_{h}f=(X_{1}f,\cdots,X_{n}f)$ stands for the subgradient of $f.$ A mapping $f:\Omega\rightarrow \mathbb{R}$ is said to be lied in the local Sobolev space $W_{loc}^{1,p}(\Omega)$ with $p\geq1$ if  for each compact set $K\subset\Omega,$ we have $f\in W^{1,p}(K).$

Now, a mapping $\varphi$ defined between a domain $\Omega\subset \mathbb{G}_{1}$ and a Carnot group $\mathbb{G}_{2}$ is said to be lied in $W_{loc}^{1,p}(\Omega,\mathbb{G}_{2})$ if $\rho_{2}\circ\varphi\in L_{loc}^{p}(\Omega)$ and $\varphi$ is absolutely continuous on lines in $\Omega$ and $X_{j}\varphi\in L_{loc}^{p}(\Omega)$ for $j=1,\cdots,n.$ Here, $\rho_{2}$ is the homogeneous norm on the Carnot group $\mathbb{G}_{2}.$

$\mathbf{Generalized\;finite\;distortion\;maps\;on\;Carnot\;group}.$  
Let $\Omega\subset\mathbb{G}$ be a domain and $f\in W_{loc}^{1,Q}(\Omega,\mathbb{G}).$ Then the symbol $D_{h}f$  defined by $D_{h}f=(X_{i}f_{j})$ with $i,j=1,\cdots,n$  is called the formal horizontal differential of $f.$ It was proved by Pansu in \cite[Proposition 4.1]{Pan89} that the linear operator induced
by  formal horizontal differential $D_{h}f$ is a mapping of the horizontal space $V_{1}$ into the horizontal space $V_{1}.$ In addition, by a result of 
Vodoptyanov \cite{Vod96}, the formal horizontal differential $D_{h}f:V_{1}\rightarrow V_{1}$ generates the homomorphism $Df:\mathfrak{g}\rightarrow \mathfrak{g}.$ The induced homomorphism is called the formal differential of $f.$ Its determinant is called the (formal) Jacobian and denoted by $J_{f}.$ Also, it is well known that the norms for those two differentials satisfying the following inequality
$$\vert D_{h}f(x)\vert\leq\vert Df(x)\vert\leq C\vert D_{h}f(x)\vert,$$
 where $C$ is a constant depends only on the structure of the groups; c.f. \cite{BV23}. Now, we can process to introduce the definition of generalized finite distortion maps on Carnot group.
\begin{defn}\label{df1}
Suppose that $\mathbb{G}$ is a Carnot group and $ \Omega\subset\mathbb{G}$ is a domain.  Then, $f\in W^{1,Q}_{loc}(\Omega,\mathbb{G})$
is said to be a generalized finite distortion map if it satisfies the following inequality
\begin{equation}
\vert D_{h}f(x)\vert^{Q}\leq K(x)J_{f}(x)+\Sigma(x)
\end{equation}
for almost every $x\in\Omega.$ Here $K:\Omega\rightarrow[1,\infty)$ and $\Sigma:\Omega\rightarrow[0,\infty)$ are the measurable functions.
\end{defn}

 The following concept of quasiregular value is analogical to the one in the setting of Euclidean space given  by \cite[Definition 1.1]{KO222}.
\begin{defn}\label{df2}
Let  $K\geq1$ be a given constant. Suppose that $ \Omega\subset\mathbb{G}$ is a domain and $y_{0}\in\mathbb{G}.$  Then, $f\in W^{1,Q}_{loc}(\Omega,\mathbb{G})$
is said to be having a $(K, \Sigma)$-quasiregular value at $y_{0}$ if it satisfies the following inequality
\begin{equation}\label{df2111111}
\vert D_{h}f(x)\vert^{Q}\leq KJ_{f}(x)+\Sigma(x)(\rho\circ l_{y_{0}^{-1}}\circ f(x))^{Q}
\end{equation}
for almost every $x\in\Omega.$ Here $\Sigma:\Omega\rightarrow[0,\infty)$ is a measurable function.
\end{defn}

\begin{rem} 
It is worth noting that in Definitions \ref{df1} and \ref{df2}, it still allows for $\Omega$ to have regions where $J_{f}$  is negative.
\end{rem}

\begin{defn}
For a domain $\Omega\subset\mathbb{G},$ we say a function $g:\Omega\rightarrow [1,\infty)$ is $\mathcal{K}_{1}$-bounded $p$-mean integrable on every ball $B:=B(x,r)\subset\Omega$ if
\begin{equation}
\mathcal{K}_{1}:=\sup_{0<r<{\rm dist}(x,\partial\Omega)}\left(\frac{\int_{B}g^{p}(x)dx}{m(B)}\right)^{\frac{1}{p}}<\infty.
\end{equation}
In this situation, we simply say that $g$ satisfies $(\mathcal{K}_{1},p)$-condition on $\Omega.$
\end{defn}

\begin{defn}
For a domain $\Omega\subset\mathbb{G},$ we say a function $g:\Omega\rightarrow [1,\infty)$ is $\mathcal{K}_{2}$-bounded $p$-mean integrable on every sphere $\partial B:=\partial B(x,r)\subset\Omega$ if 
\begin{equation}
\mathcal{K}_{2}:=\sup_{0<r<{\rm dist}(x,\partial\Omega)}\left(\frac{\int_{\partial B}g^{p}(x)d\sigma}{m(\partial B)}\right)^{\frac{1}{p}}<\infty.
\end{equation}
Here we simply say that $g$ satisfies $(\mathcal{K}_{2},p)$-spherical condition on $\Omega.$
\end{defn}

$\mathbf{Carnot\;group\;of\;}$$H$$\mathbf{-type\;and\;its\;norm}.$ The following concept on Carnot group of $H$-type was introduced by Kaplan \cite{Kap80}.
\begin{defn}\cite{Kap80}
We say a Carnot group $\mathbb{G}$ is of $H$-type if its Lie algebra $\mathfrak{g}=V_{1}\oplus V_{2}$ is two step, and if, in addition, there is an inner produce $\langle,\rangle$ in $\mathfrak{g}$ such that for the linear mapping $J:V_{2}\rightarrow{\rm End}\,V_{1}$
satisfying $\langle J_{Z}(U),V\rangle=\langle Z,[U,V]\rangle$
and $J_{Z}^{2}=-\vert Z\vert^{2} \mathbf{Id}$ for all $U,V\in V_{1}$ and $z\in V_{2}.$
\end{defn}

 Since any two  homogeneous norms on  Carnot group are equivalent, it allows us to only consider the following homogeneous norm on  Carnot group $\mathbb{G}$ of $H$-type defined by following ways: For each $x\in \mathbb{G},$ we let $a(x)\in V_{1},b(x)\in V_{2},$ such that $x=\exp\left(a(x)+b(x)\right).$ Then, we define 
\begin{equation}\label{norm1}
\rho(x)=\left(\vert a(x)\vert^{4}+16\vert b(x)\vert^{2}\right)^{1/4}.
\end{equation}
It was proved by Heinonen and Holopainen \cite{HH97} that the norm given by (\ref{norm1}) is smooth and satisfies the following equation 
\begin{equation}
{\rm div}_{h}(\vert\nabla_{h}\gamma\vert)^{Q-1}\nabla_{h}\gamma)=0
\end{equation}
 outside the identity element of $\mathbb{G}.$ Here, $\gamma=\log\rho.$ 

The following example of Heisenberg group is a  primary example of a Carnot group of $H$-type. It can  help us to better understand the properties of Carnot groups of $H$-type.
\begin{exmp}\cite[Example 1.1]{BV23}
Let $\mathbb{H}^{n}=(\mathbb{R}^{2n+1},\ast)$ be a Heisenberg group with homogeneous dimension $Q=2n+2.$ The group law of  $\mathbb{H}^{n}$ is  given by
$$(x,y,t)\ast(x',y',t')=\left(x+x',y+y',t+t'+\frac{x\cdot y'-x'y}{2}\right),$$
where $x,x',y,y'\in \mathbb{R}^{n},t,t'\in\mathbb{R}.$ It is Lie algebra $\mathfrak{g}=V_{1}\oplus V_{2}$ is spanned by the vector fields
$$X_{i}=\frac{\partial}{\partial x_{i}}-\frac{y_{i}}{2}\frac{\partial}{\partial t},Y_{i}=\frac{\partial}{\partial y_{i}}-\frac{x_{i}}{2}\frac{\partial}{\partial t}, i=1,\cdots,n,T=\frac{\partial}{\partial t},$$
where $V_{1}={\rm span}\{X_{i},Y_{i}:i=1,\cdots,n\}$ and $V_{2}={\rm span}\{T\}.$ The only nontrivial Lie brackets are that $[X_{i},Y_{i}]=T,$ where $i=1,\cdots,n.$ Suppose that $\langle \cdot,\cdot\rangle$
is a inner product such that the vector fields are orthonormal. Then, a linear mapping $J_{T}:V_{1}\rightarrow V_{1}$ can be defined by $J_{T}(X_{i})=Y_{i}$ and $J_{T}(Y_{i})=-X_{i}.$ Its homogeneous norm is defined 
\begin{equation}\label{099009}
\rho(p)=\left(\vert x\vert^{4}+t^{2}\right)^{1/4},
\end{equation}
where $p=(x,t)\in \mathbb{H}^{n}.$ A straightforward calculation shows that the function $\gamma(p):=\log\rho(p)$  satisfies the equation 
${\rm div}_{h}(\vert\nabla_{h}\gamma\vert)^{2n+1}\nabla_{h}\gamma)=0,$
where $\rho(p)$ is given by (\ref{099009}).
\end{exmp}

$\mathbf{Degree\;theory}.$  Here we recall some concepts and known results on degree of a map on Carnot group. For detailed information see  \cite{Fed69, FG95, OR09}. For a domain $\Omega$ with closure $\overline{\Omega}\subset\mathbb{G}$ and $f:\Omega\rightarrow\mathbb{G}$ is continuous, we say the  triple $(f,\Omega,y)$ is admissible if $f$ is proper and $y\notin f(\partial\Omega).$ Here,  $f:\Omega\rightarrow\mathbb{G}$ is said to be proper if $f^{-1}\{y\}$ is compact for every $y\in\mathbb{G}.$

Suppose that the   triple $(f,\Omega,y)$ is admissible. If $f$ is a smooth map and $y$ is a regular point of $f.$ Then, we define 
$${\rm deg}(f,\Omega,y):=\sum_{x\in f^{-1}\{y\}\cap\Omega}{\rm sgn}(J_{f}(x)).$$
If $f$ is not smooth or $y$ is not a regular point of $f.$ Then, the definition of  ${\rm deg}(f,\Omega,y)$ of the admissible   triple $(f,\Omega,y)$  can still be defined by the approximation of smooth functions. Namely, it is  defined by 
$${\rm deg}(f,\Omega,y):={\rm deg}(f_{1},\Omega,y_{1}).$$
Here, the  triple $(f_{1},\Omega,y_{1})$ is admissible, for which $f_{1}:\Omega\rightarrow\mathbb{G}$ is smooth, $y_{1}$ is a regular point of $f_{1},$ and satisfying the following inequality
$$d(y,y_{1})+\sup _{x\in \overline{\Omega}}d(f(y),f(y_{1}))\ll {\rm dist}(y,f(\partial\Omega)).$$

The following results on degree of a map can be found in \cite{Fed69, FG95, OR09}.
\begin{lem}\label{lem2.1}
Suppose that $f:\overline{\Omega}\rightarrow\mathbb{G}$ is proper, where $\Omega\subset\mathbb{G}$ is a domain.
\begin{itemize}
\item  If two points $y,y_{1}$ are in the same path component of $\mathbb{G}\setminus f(\partial\Omega),$ then ${\rm deg}(f,\Omega,y)={\rm deg}(f,\Omega,y_{1}).$ In particular,  if $y\in V\subset\mathbb{G}\setminus f(\partial\Omega),$ and $V$ is connected. Then ${\rm deg}(f,\Omega,y)$  is constant-valued on $V,$ which is independent of $y.$  In this case,  we call the constant of ${\rm deg}(f,\Omega,y)$ the degree of $f$
    with respect to $V$ and  denote ${\rm deg}(f,\Omega):={\rm deg}(f,\Omega,y).$ 
\item If $V\subset\Omega$ is an open set and satisfies that $f^{-1}\{y\}\subset V.$ Then, $${\rm deg}(f,V,y)={\rm deg}(f,\Omega,y).$$ 
\item  If $\Omega=\bigcup_{i\in \mathbb{N}}\Omega_{i},$ where $\Omega_{i}$ are open sets for all $i\in \mathbb{N}$ satisfying $\Omega_{i}\cap \Omega_{j}=\emptyset$ for $i\neq j.$ Then,
$${\rm deg}(f,\Omega,y)=\sum_{i\in \mathbb{N}}{\rm deg}(f,\Omega_{i},y).$$
\end{itemize}
\end{lem}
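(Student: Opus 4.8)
The plan is to reduce all three assertions to the defining formula ${\rm deg}(f,\Omega,y)=\sum_{x\in f^{-1}\{y\}\cap\Omega}{\rm sgn}(J_{f}(x))$, valid for smooth $f$ and regular values $y$, after first installing the one genuinely substantial ingredient: that this signed count is invariant under small admissible perturbations of the pair $(f,y)$, so that the degree of an arbitrary admissible triple is independent of the approximating smooth data used in the definition. Since the exponential map identifies $\mathbb{G}$ smoothly with $\mathbb{R}^{N}$ and the sub-Riemannian structure is irrelevant to a purely topological invariant, I would import the classical differential-topology proof verbatim: for a fixed smooth $f$ and two regular values $y,y'$ in the same component of $\mathbb{G}\setminus f(\partial\Omega)$, join them by a smooth path $\gamma$ avoiding $f(\partial\Omega)$, perturb $\gamma$ to be transverse to $f$, and observe that $f^{-1}(\gamma)$ is then a compact $1$-manifold with boundary contained in $f^{-1}\{y\}\cup f^{-1}\{y'\}$; counting boundary points with the orientation-induced sign shows the signed counts at $y$ and at $y'$ agree. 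The one point requiring a remark in the Carnot setting is that the sign of the formal Jacobian $J_{f}$ must agree with the orientation sign of $f$ viewed as a map $\mathbb{R}^{N}\to\mathbb{R}^{N}$ under the exponential identification, so that the two notions of degree coincide; this compatibility is known for the class of maps at hand and is implicit in the cited references, and it is the only place where the $H$-type (two-step graded) structure intervenes.

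With invariance in hand, the first bullet follows quickly. I would first show that the degree is a locally constant function of $y$ on $\mathbb{G}\setminus f(\partial\Omega)$: near any $y$ one may, by Sard's theorem, choose a regular value $y'$ of a smooth approximant $f_{1}$ with $d(y,y')$ small compared to ${\rm dist}(y,f(\partial\Omega))$, and the admissibility inequality built into the definition then shows ${\rm deg}(f,\Omega,\cdot)$ is constant on a neighbourhood of $y$. A locally constant function is constant on each path component, which is exactly the assertion that ${\rm deg}(f,\Omega,y)={\rm deg}(f,\Omega,y_{1})$ whenever $y,y_{1}$ lie in the same path component of $\mathbb{G}\setminus f(\partial\Omega)$; the ``in particular'' statement is the specialization to a connected open $V$, which is automatically path-connected.

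The second and third bullets reduce to the smooth, regular-value formula. For excision, if $f^{-1}\{y\}\subset V\subset\Omega$ then every point of $f^{-1}\{y\}\cap\Omega$ already lies in $V$, so the two signed sums $\sum_{x\in f^{-1}\{y\}\cap\Omega}{\rm sgn}\,J_{f}(x)$ and $\sum_{x\in f^{-1}\{y\}\cap V}{\rm sgn}\,J_{f}(x)$ coincide; I would only need to check that one common smooth approximant may be used on both $\Omega$ and $V$, which holds because $f^{-1}\{y\}$ stays off $\partial V$ after a sufficiently small perturbation. For additivity, properness of $f$ makes $f^{-1}\{y\}$ compact; since the $\Omega_{i}$ are pairwise disjoint open sets with union $\Omega$, each $\Omega_{i}$ is relatively closed in $\Omega$, whence $\partial\Omega_{i}\subset\partial\Omega$ and so $f(\partial\Omega_{i})\subset f(\partial\Omega)$, making every triple $(f,\Omega_{i},y)$ admissible; compactness forces $f^{-1}\{y\}$ to meet only finitely many $\Omega_{i}$. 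The signed sum over $f^{-1}\{y\}\cap\Omega$ then splits as the finite sum of the signed sums over the $f^{-1}\{y\}\cap\Omega_{i}$, which is the claimed countable additivity, all but finitely many terms vanishing.

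I expect the main obstacle to be the first step---the perturbation/homotopy invariance underlying well-definedness---rather than the three bullets, which are formal once the signed-count formula is stable. In the present setting this invariance is not genuinely new: because $\mathbb{G}$ is diffeomorphic to $\mathbb{R}^{N}$ via $\exp$, the construction is the Brouwer degree transported along a diffeomorphism, and \cite{Fed69, FG95, OR09} already supply it. Accordingly I would keep the proof of invariance brief, emphasizing only the sign-compatibility of the formal Jacobian with the topological orientation, and then deduce the three listed properties as above.
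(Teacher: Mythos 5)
This lemma is not proved in the paper at all: it is stated as known and deferred to \cite{Fed69, FG95, OR09}, which develop classical Brouwer--Federer degree theory, so there is no in-paper argument to compare against beyond that citation. Your proposal reconstructs exactly the standard arguments those references contain (well-definedness via the regular-value/compact $1$-manifold count, local constancy of ${\rm deg}(f,\Omega,\cdot)$ on $\mathbb{G}\setminus f(\partial\Omega)$, excision, and additivity using $\partial\Omega_{i}\subset\partial\Omega$ for clopen-in-$\Omega$ pieces), transported to $\mathbb{G}$ through the global diffeomorphism $\exp$; this is correct in outline and is precisely the route the paper implicitly relies on, with the sign-compatibility of the formal Jacobian $J_{f}$ with the topological orientation being the only Carnot-specific point, which you, like the paper, justifiably defer to the cited sources.
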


$\mathbf{Local\;index}.$ Let $\Omega$ be a connected domain, $y_{0}\in\mathbb{G}$ and   $f:\Omega\rightarrow\mathbb{G}$ is continuous. Suppose that $x_{0}\in f^{-1}\{y_{0}\}.$ If $V_{1}$ and $V_{2}$ are two neighborhoods of $x_{0}$ such that $\overline{V_{1}}\cap f^{-1}\{y_{0}\}=\overline{V_{2}}\cap f^{-1}\{y_{0}\}=\{x_{0}\},$ then $y_{0}\notin f(V_{i}), i=1,2.$ Hence, it is meaningful to define  ${\rm deg}(f,V_{i},y_{0}),i=1,2.$ In fact, it well known that ${\rm deg}(f,V_{1},y_{0})={\rm deg}(f,V_{2},y_{0})={\rm deg}(f,V_{1}\cup V_{2},y_{0}).$ Therefore, if there exist a neighborhood $V$ of $x_{0},$ such that 
$\overline{V}\cap f^{-1}\{y_{0}\}=\{x_{0}\},$ then the same value of ${\rm deg}(f,V,y_{0})$ is regardless of the choice of $V.$ We call this value the local index of $f$ at $x_{0}$ and  record it as $i(x_{0},f).$

 \section{H\"{o}lder continuity for some subclass of generalized finite distortion maps}\label{sec255555}
In this section, we will establish  the continuity for a subclass of generalized finite distortion maps defined by Definition \ref{df1} in the case that $\mathbb{G}$ is a Carnot group of $H$-type. Actually, the mapping mentioned in Theorem  \ref{6666312} is locally H\"{o}lder continuous. The main result of this section is to give the explicit H\"{o}lder exponent for this kinds of mappings.  At the rest of this paper,  the constants $c_{1}$ and $I$ are the ones given by  (\ref{00oo}) and (\ref{iso00000}), respectively. The main result of this section is read as follows.
\begin{thm}\label{thm20}
Suppose that  $1/p+1/q<1$ with $1<p,q\leq\infty$ and $\mathbb{G}$ is a Carnot group of $H$-type. Let $\Omega\subset\mathbb{G}$ be a bounded domain and $f:\Omega\rightarrow \mathbb{G}$ be a mapping of class $W^{1,Q}(\Omega)$ satisfying  the following inequality
$$\vert D_{h}f(x)\vert^{Q}\leq K(x)J_{f}(x)+\Sigma(x)$$
for almost every $x\in\Omega.$ If $K\in L^{p}(\Omega),$ $\Sigma/K\in L^{q}(\Omega)$ and $K$ satisfies $(\mathcal{K},Q-1)$-spherical condition on $\Omega$, then $f$ has a continuous representative.
In particular,  for every fixed $x\in\Omega$ and for $r$ which satisfies $r<c_{1} R_{0}\ll{\rm dist}(x,\partial\Omega).$

(i)\,If $Q(1-1/p-1/q)\neq1/I\mathcal{K},$  then we have
 \begin{equation}
 \begin{aligned}
d_{c}(f(z),f(y))
\leq &C_{1}(d_{c}(z,y))^{\min\Big\{1-\frac{1}{p}-\frac{1}{q},\frac{1}{QI\mathcal{K}}\Big\}}\\
&\,\,\,\,\,\,\,\,\,\,\,\,\,\,\,\,\,\,\,\,+C_{2}(d_{c}(z,y))^{\frac{1}{2}\min\Big\{1-\frac{1}{p}-\frac{1}{q},\frac{1}{QI\mathcal{K}}\Big\}}\\
\end{aligned}
\end{equation}
for every $y,z\in B(x,r/4).$  Here $C_{1}$ and $C_{2}$ are two positive constants depend only on $Q, p, q,\mathcal{K}, R_{0},$ $
\Vert D_{h}f\Vert_{L^{Q}(\Omega)}, \|K\|_{L^{p}(\Omega)}$ and $\|\Sigma/K\|_{L^{q}(\Omega)}.$  

(ii)\,If $Q(1-1/p-1/q)=1/I\mathcal{K},$ then we have 
  \begin{equation}
\begin{aligned}
d_{c}(f(y),f(z))\leq C_{3}&d_{c}(y,z)^{\frac{1}{QI\mathcal{K}}}\left(\log\frac{c_{1}R_{0}e^{1+I\mathcal{K}}}{d_{c}(y,z)}\right)^{\frac{1}{Q}}\\
&\,\,\,\,\,\,\,\,\,\,\,\,\,\,\,\,\,\,\,\,+C_{4}d_{c}(y,z)^{\frac{1}{2QI\mathcal{K}}}\left(\log\frac{c_{1}R_{0}e^{1+I\mathcal{K}}}{d_{c}(y,z)}\right)^{\frac{1}{2Q}}
\end{aligned}
\end{equation}
for every $y,z\in B(x,r/4).$   Here $C_{3}$ and $C_{4}$ are the positive constants which depend only on $Q, p, q,\mathcal{K}, R_{0},$ $
\Vert D_{h}f\Vert_{L^{Q}(\Omega)}, \|K\|_{L^{p}(\Omega)}$ and $\|\Sigma/K\|_{L^{q}(\Omega)}.$  
\end{thm}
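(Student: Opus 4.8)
The plan is to reduce the statement to a single growth estimate for the images of concentric spheres and then to solve an ordinary differential inequality. Fix $x\in\Omega$ and, for $0<r<c_{1}R_{0}$, introduce the growth function
$$M(r)=\sup_{y\in S(x,r)}\rho\bigl(f(x)^{-1}f(y)\bigr),$$
which records how far the image of the sphere $S(x,r)$ travels from $f(x)$ in the homogeneous norm. Since all homogeneous norms are equivalent and $\rho$ is comparable to $d_{c}$, controlling the growth of $M$ in $r$ controls $d_{c}(f(z),f(y))$ for $y,z\in B(x,r/4)$ after one application of the quasi-triangle inequality \eqref{00oo}; this is why the constant $c_{1}$ enters the final bounds. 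Thus the whole problem becomes: bound $M(r)$, equivalently $M(r)^{Q}$, by a power of $r$.

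To produce a differential inequality for $M$, I would work on a.e.\ sphere $S(x,r)$, where the absolute continuity on lines of $f\in W^{1,Q}$ guarantees that the restriction of $f$ to the sphere is absolutely continuous with $Q$-integrable tangential gradient. Bounding the oscillation of $f$ on $S(x,r)$ by the $(Q-1)$-energy $\int_{S(x,r)}|D_{h}f|^{Q-1}\,d\sigma$ through a Sobolev--Poincar\'e estimate on the $(Q-1)$-dimensional sphere, I would then insert the distortion inequality $|D_{h}f|^{Q}\le K J_{f}+\Sigma$. Writing $|D_{h}f|^{Q-1}=(|D_{h}f|^{Q})^{(Q-1)/Q}$ and using subadditivity of $t\mapsto t^{(Q-1)/Q}$ splits the spherical energy into a quasiregular part $\int_{S(x,r)}(KJ_{f})^{(Q-1)/Q}$ and an inhomogeneous part $\int_{S(x,r)}\Sigma^{(Q-1)/Q}$. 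For the first part, H\"older's inequality with exponents $Q$ and $Q/(Q-1)$ isolates $(\int_{S(x,r)}K^{Q-1}\,d\sigma)^{1/Q}$, which the $(\mathcal{K},Q-1)$-spherical condition bounds by $C\mathcal{K}^{Q-1}r^{Q-1}$ up to a power, times $(\int_{S(x,r)}J_{f})^{(Q-1)/Q}$; combined with the isoperimetric inequality of constant $I$ on $\mathbb{G}$, this converts the spherical Jacobian mass into the radial derivative of $M(r)^{Q}$ and produces the rate $1/(QI\mathcal{K})$. For the inhomogeneous part I would factor $\Sigma=(\Sigma/K)\cdot K$ and apply H\"older with exponents $q$, $p$ and $(1-1/p-1/q)^{-1}$, producing the scale $r^{Q(1-1/p-1/q)}$; this is where $1/p+1/q<1$ is used and where the exponent $1-1/p-1/q$ originates.

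Assembling these estimates yields a differential inequality of the form $\tfrac{d}{dr}M(r)^{Q}\gtrsim (\text{const})\,r^{-1}M(r)^{Q}$ corrected by an inhomogeneous term of order $r^{Q(1-1/p-1/q)-1}$, which I would integrate by Gronwall/ODE comparison. The two effective rates are $1/(QI\mathcal{K})$ and $1-1/p-1/q$, and their minimum governs the leading power, which is exactly the exponent $\min\{1-1/p-1/q,\,1/(QI\mathcal{K})\}$ in (i). The borderline case $Q(1-1/p-1/q)=1/(I\mathcal{K})$ is precisely the resonance where the two rates coincide, so integrating $dr/r$ against a matching power produces the logarithmic factor of (ii). Finally, the bound is first obtained for $M(r)^{Q}$ as a sum of a main term and a lower-order cross term; taking $Q$-th roots and using subadditivity of $t\mapsto t^{1/Q}$ splits it into two summands, the second carrying half the exponent (and, in case (ii), half the logarithmic power), which accounts for the paired terms $r^{\alpha}$ and $r^{\alpha/2}$ in both displayed estimates.

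The main obstacle is the possible negativity of $J_{f}$. Because $\Omega$ may contain regions where $J_{f}<0$, the passage from $\int_{S(x,r)}J_{f}$ to a genuine volume/growth quantity is not a plain area formula, and the quasiregular part of the spherical energy must be handled so that the sign does not destroy the monotonicity required for the ODE comparison. Controlling this, together with the borderline nature of the $(Q-1)$-Sobolev step on the sphere (the source of the logarithm in case (ii)), is the technical heart of the argument, and it is the higher integrability available for such maps on $H$-type groups that makes this control possible.
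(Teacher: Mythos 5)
Your proposal gets the ODE skeleton right---the two competing rates $1/(QI\mathcal{K})$ and $1-1/p-1/q$, the isoperimetric constant $I$, the spherical $(\mathcal{K},Q-1)$-condition, the triple H\"older factorization $\Sigma=K\cdot(\Sigma/K)\cdot 1$, and the resonance producing the logarithm---but the object you run the ODE on is the wrong one, and this creates a genuine gap. Your differential inequality for $M(r)=\sup_{y\in S(x,r)}\rho(f(x)^{-1}f(y))$ hinges on bounding the oscillation of $f$ on the metric sphere $S(x,r)$ by the spherical energy $\int_{S(x,r)}|D_hf|^{Q-1}\,d\sigma$ via a ``Sobolev--Poincar\'e estimate on the $(Q-1)$-dimensional sphere.'' That estimate is precisely the tool the paper identifies as \emph{unavailable} on Carnot groups of $H$-type: the known substitute (Vodopyanov's \cite[Corollary 1]{Vod96}, inequality (\ref{shpere2})) controls the spherical oscillation only through maximal functions of $|D_hf|$ over ambient balls, not through a purely spherical integral, and the paper's whole design is built to avoid this missing embedding. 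Moreover, closing your ODE requires converting the Jacobian mass $\int_{B(x,r)}J_f$ into a \emph{lower} bound of the form $cM(r)^{Q}$ (the image covering a ball of radius comparable to the oscillation); that step needs degree theory or monotonicity, which presupposes the continuity you are trying to prove---a circularity the oscillation approach cannot escape here.

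The paper instead runs the ODE on the energy $\varphi(r)=\int_{B(x,r)}|D_hf|^{Q}/K$: the coarea formula gives $\varphi'(r)=\int_{S(x,r)}|D_hf|^{Q}/K$, so the isoperimetric inequality (Lemma \ref{iso}) plus the spherical condition yields $\varphi(r)\leq I\mathcal{K}r\varphi'(r)+Ar^{Q(1-1/p-1/q)}$ with no spherical embedding needed (Lemma \ref{lem0}). Continuity is then extracted from this Morrey-type decay by a Campanato chaining argument on \emph{balls} (Poincar\'e inequality, Lemma \ref{-11}, and telescoping over shrinking balls between Lebesgue points, Proposition \ref{09888888111}), applied to each coordinate function $f_j$ via the estimate $|\nabla_hf_j|\leq C(\Omega)|D_hf|$. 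Finally, the paired terms with exponents $\alpha/Q$ and $\alpha/(2Q)$ do not come from subadditivity of $t\mapsto t^{1/Q}$ splitting a cross term, as you suggest: they come from the grading of the $H$-type norm $\rho=\left(|a|^{4}+16|b|^{2}\right)^{1/4}$, in which the second-layer coordinates enter squared under a fourth root and hence carry half the H\"older exponent of the horizontal ones. Your closing remark about higher integrability is also misplaced for this theorem---the paper invokes higher integrability (Gehring's lemma) only later, in the proof of positivity of the degree, not in the continuity argument.
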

In particular, according to Theorem \ref{thm20},  we obtain the following result in the case that  $K\geq1$ is a constant.
\begin{cor}\label{corroo}
For a given constant $K\geq1.$ Suppose that  $\mathbb{G}$ is a Carnot group of $H$-type, $\Omega\subset \mathbb{G}$ is a domain and $f:\Omega\rightarrow \mathbb{G}$ is a mapping of class $W^{1,Q}(\Omega)$ satisfying  the following inequality
$$\vert D_{h}f(x)\vert^{Q}\leq KJ_{f}(x)+\Sigma(x)$$
for almost every $x\in\Omega.$ If  $\Sigma\in L^{1+\varepsilon}(\Omega)$ for some $\varepsilon>0$, then $f$ has a continuous representative. In particular,  for every fixed $x\in\Omega$ and for $r$ which satisfying $r< c_{1}R_{0}\ll{\rm dist}(x,\partial\Omega).$

(i)\,If $Q\varepsilon/(1+\varepsilon)\neq1/IK,$  then we have
   \begin{equation}
 \begin{aligned}
d_{c}(f(z),f(y))
\leq &C_{1}(d_{c}(z,y))^{\min\Big\{\frac{\varepsilon}{1+\varepsilon},\frac{1}{QIK}\Big\}}+C_{2}(d_{c}(z,y))^{\frac{1}{2}\min\Big\{\frac{\varepsilon}{1+\varepsilon},\frac{1}{QIK}\Big\}}\\
\end{aligned}
\end{equation}
for every $y,z\in B(x,r/4).$  Here $C_{1}$ and $C_{2}$ are two positive constants depend only on $Q, p, q,K, R_{0},$ $
\Vert D_{h}f\Vert_{L^{Q}(\Omega)}$ and $ \|\Sigma\|_{L^{1+\varepsilon}(\Omega)}.$ 

(ii)\,If $Q\varepsilon/(1+\varepsilon)=1/IK,$ then we have 
  \begin{equation}
\begin{aligned}
d_{c}(f(y),f(z))\leq C_{3}&d_{c}(y,z)^{\frac{1}{QIK}}\left(\log\frac{c_{1}R_{0}e^{1+IK}}{d_{c}(y,z)}\right)^{\frac{1}{Q}}\\
&\,\,\,\,\,\,\,\,\,\,\,\,\,\,\,\,\,\,\,\,+C_{4}d_{c}(y,z)^{\frac{1}{2QIK}}\left(\log\frac{c_{1}R_{0}e^{1+IK}}{d_{c}(y,z)}\right)^{\frac{1}{2Q}}
\end{aligned}
\end{equation}
for every $y,z\in B(x,r/4).$   Here $C_{3}$ and $C_{4}$ are the positive constants which depend only on $Q, p, q,K, R_{0},$ $
\Vert D_{h}f\Vert_{L^{Q}(\Omega)}$ and $ \|\Sigma\|_{L^{1+\varepsilon}(\Omega)}.$  
\end{cor}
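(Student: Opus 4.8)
The plan is to obtain Corollary \ref{corroo} as a direct specialization of Theorem \ref{thm20} corresponding to the exponent choice $p=\infty$ and $q=1+\varepsilon$. Since the conclusions are local in nature --- continuity of a representative together with the Hölder estimates on balls $B(x,r/4)$ with $r\ll\mathrm{dist}(x,\partial\Omega)$ --- I would first restrict $f$ to an arbitrary ball $B$ with $\overline{B}\subset\Omega$, which places us in the bounded-domain hypothesis of Theorem \ref{thm20}.

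Next I would verify that the hypotheses of Theorem \ref{thm20} hold for this choice of $p$ and $q$. Because $K\geq 1$ is a constant, $K\in L^{\infty}(\Omega)=L^{p}(\Omega)$, and the membership $\Sigma/K\in L^{q}(\Omega)=L^{1+\varepsilon}(\Omega)$ is equivalent to the assumed $\Sigma\in L^{1+\varepsilon}(\Omega)$, with $\|\Sigma/K\|_{L^{1+\varepsilon}}=K^{-1}\|\Sigma\|_{L^{1+\varepsilon}}$; moreover the admissibility condition $1/p+1/q<1$ reduces to $1/(1+\varepsilon)<1$, which is valid for every $\varepsilon>0$. The remaining hypothesis is the $(\mathcal{K},Q-1)$-spherical condition, and here the only point needing a word of justification is the identification of the constant: for a constant function $K$ the spherical mean of $K^{Q-1}$ over any sphere equals $K^{Q-1}$, so the condition holds with $\mathcal{K}=K$.

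Finally I would substitute these values into the conclusion of Theorem \ref{thm20}. The exponent $1-1/p-1/q$ becomes $\varepsilon/(1+\varepsilon)$ while $1/(QI\mathcal{K})$ becomes $1/(QIK)$, so the dichotomy $Q(1-1/p-1/q)\neq 1/(I\mathcal{K})$ versus equality turns precisely into $Q\varepsilon/(1+\varepsilon)\neq 1/(IK)$ versus $Q\varepsilon/(1+\varepsilon)=1/(IK)$, and estimates (i) and (ii) of the theorem specialize verbatim to estimates (i) and (ii) of the corollary. The stated dependence of $C_{1},\dots,C_{4}$ is inherited after rewriting $\mathcal{K}=K$, $\|K\|_{L^{\infty}}=K$ and $\|\Sigma/K\|_{L^{1+\varepsilon}}=K^{-1}\|\Sigma\|_{L^{1+\varepsilon}}$. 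There is no genuine obstacle in this argument: the corollary is a clean reading-off of the theorem, and the only substantive checks are confirming that $p=\infty$ is permitted (Theorem \ref{thm20} allows $p\leq\infty$) and that the spherical condition yields $\mathcal{K}=K$.
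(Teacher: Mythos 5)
Your proposal is correct and matches the paper's own treatment: the paper gives no separate proof of Corollary \ref{corroo}, presenting it as an immediate specialization of Theorem \ref{thm20} to constant $K$, which is exactly your argument with $p=\infty$, $q=1+\varepsilon$, and $\mathcal{K}=K$ from the spherical condition. Your additional care in restricting to a ball $\overline{B}\subset\Omega$ to meet the bounded-domain hypothesis (the corollary only assumes $\Omega$ is a domain) is a correct and slightly more scrupulous reading than the paper's.
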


Next, we will make some preparations to prove Theorem \ref{thm20}. In particular, we need the following isopermetric inequality for Sobolev maps on two-step of Carnot groups, which was established in \cite[Theorem 3.1]{Vod07}.
\begin{lem}{\rm\cite[Theorem 3.1]{Vod07}}\label{iso}
Suppose that $\mathbb{G}$ is a two-step Carnot group. Let $\Omega\subset \mathbb{G}$ be a bounded domain and $f:\Omega\rightarrow \mathbb{G}$ be a mapping of class $W^{1,Q}(\Omega).$ Then, the following inequality 
\begin{equation}\label{iso00000}
\abs{\int_{B(x,r)}J_{f}}\leq I\left(\int_{S(x,r)}\vert D^{\sharp}f \vert\right)^{Q/(Q-1)}
\end{equation}
holds for every $x\in\Omega$ and almost all $r\in(0,{\rm dist}(x,\partial\Omega)).$ Here, $I$ is a constant which independent of the choice of $f.$
\end{lem}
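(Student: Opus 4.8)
The plan is to identify the left-hand side of (\ref{iso00000}) with a signed volume through degree theory and then to invoke the isoperimetric inequality intrinsic to $\mathbb{G}$. First I would fix $x\in\Omega$ and restrict attention to a good radius: since $f\in W^{1,Q}(\Omega)$, a Fubini-type slicing over the family of spheres $S(x,\rho)$ shows that for almost every $r\in(0,{\rm dist}(x,\partial\Omega))$ the restriction $f|_{S(x,r)}$ is absolutely continuous on lines along the sphere, its tangential differential $D^{\sharp}f$ is defined $\mathcal{H}^{Q-1}$-almost everywhere on $S(x,r)$, and $\int_{S(x,r)}\vert D^{\sharp}f\vert<\infty$. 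For such $r$ the triple $(f,B(x,r),y)$ is admissible for every $y\notin f(S(x,r))$, so by Lemma \ref{lem2.1} the degree ${\rm deg}(f,B(x,r),y)$ is constant on each connected component of $\mathbb{G}\setminus f(S(x,r))$ and vanishes on the unbounded one.

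The key identity is the change-of-variables (area) formula for Sobolev maps on Carnot groups. Since $\vert J_{f}\vert\leq C\vert D_{h}f\vert^{Q}\in L^{1}(B(x,r))$ by the norm comparison recalled in Section \ref{sec2222}, one has
\begin{equation}\label{eq:deg-id}
\int_{B(x,r)}J_{f}\,dy=\int_{\mathbb{G}}{\rm deg}(f,B(x,r),y)\,dy,
\end{equation}
where the right-hand integrand is supported on the union $U$ of the bounded components of $\mathbb{G}\setminus f(S(x,r))$. Thus $\vert\int_{B(x,r)}J_{f}\vert$ is controlled by the $Q$-measure of the region $U$ enclosed by the image sphere, weighted by the integer multiplicity of the degree.

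It then remains to control $m(U)$ by the boundary data, and this is where the intrinsic geometry of $\mathbb{G}$ enters. Because $\mathbb{G}$ has homogeneous dimension $Q$, there is a constant $C(\mathbb{G})$ with $m(E)^{(Q-1)/Q}\leq C(\mathbb{G})\,P(E;\mathbb{G})$ for every set $E$ of finite perimeter. Applying this to $U$ and estimating its perimeter by the $(Q-1)$-dimensional measure of $f(S(x,r))$ gives $P(U;\mathbb{G})\leq\mathcal{H}^{Q-1}(f(S(x,r)))\leq\int_{S(x,r)}\vert D^{\sharp}f\vert$, the last inequality being the area inequality for the tangential map $f|_{S(x,r)}$ with $\vert D^{\sharp}f\vert$ measuring its $(Q-1)$-area distortion. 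Combining these bounds yields $m(U)\leq I\big(\int_{S(x,r)}\vert D^{\sharp}f\vert\big)^{Q/(Q-1)}$, which is exactly (\ref{iso00000}) after absorbing all constants into $I$.

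The main obstacle is this final perimeter step. In the sub-Riemannian setting the Carnot--Carath\'{e}odory sphere $S(x,r)$ carries characteristic points and is only Lipschitz, so defining $D^{\sharp}f$ and proving the bound $P(U;\mathbb{G})\leq\int_{S(x,r)}\vert D^{\sharp}f\vert$ cannot be carried out by the elementary cofactor computation available in $\mathbb{R}^{n}$; it requires the geometric measure theory of Carnot groups, in particular Vodopyanov's differentiability and area-formula theory for Sobolev maps together with the rectifiability structure of sets of finite perimeter. Establishing the degree identity (\ref{eq:deg-id}) for merely $W^{1,Q}$ maps, rather than smooth ones, likewise rests on approximation and almost-everywhere differentiability for Sobolev maps between Carnot groups. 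This is precisely the analysis carried out in \cite{Vod07}, from which the stated inequality follows.
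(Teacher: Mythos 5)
First, a point of comparison: the paper itself contains no proof of Lemma \ref{iso} at all --- it is imported verbatim as \cite[Theorem 3.1]{Vod07} --- so your sketch can only be judged as a standalone argument, and as such it has a genuine gap at its central step. Your degree identity gives, writing $U_i$ for the bounded components of $\mathbb{G}\setminus f(S(x,r))$ and $d_i\in\mathbb{Z}$ for the constant value of ${\rm deg}(f,B(x,r),\cdot)$ on $U_i$,
\begin{equation*}
\abs{\int_{B(x,r)}J_f}=\abs{\sum_i d_i\,m(U_i)}\leq\sum_i\abs{d_i}\,m(U_i),
\end{equation*}
and the weights $\abs{d_i}$ can be arbitrarily large (a map winding $k$ times around $y_0$ produces $\abs{d_i}=k$). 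Your last two steps, however, bound only $m(U)=\sum_i m(U_i)$: the set-wise isoperimetric inequality applied to $U$ controls its measure, not the degree-weighted sum, so the conclusion ``$m(U)\leq I(\int_{S(x,r)}\vert D^\sharp f\vert)^{Q/(Q-1)}$, which is exactly (\ref{iso00000})'' is a non sequitur; it yields the lemma only when every $\abs{d_i}\leq1$, which need not hold. The standard repair --- and the real reason the exponent $Q/(Q-1)$ appears --- is to argue at the level of the integer-valued function $u={\rm deg}(f,B(x,r),\cdot)$ itself: show $u$ has bounded variation with $\vert Du\vert(\mathbb{G})\leq C\int_{S(x,r)}\vert D^\sharp f\vert$ (the image of the sphere, counted with multiplicity, pays for all jumps of $u$, including those between nested components of different degree), apply the Sobolev inequality for BV functions on Carnot groups (cf. \cite{GN96}), $\Vert u\Vert_{L^{Q/(Q-1)}(\mathbb{G})}\leq C\vert Du\vert(\mathbb{G})$, and then use integer-valuedness, $\vert u\vert\leq\vert u\vert^{Q/(Q-1)}$, to conclude
\begin{equation*}
\abs{\int_{B(x,r)}J_f}\leq\int_{\mathbb{G}}\vert u\vert\leq\Vert u\Vert_{L^{Q/(Q-1)}}^{Q/(Q-1)}\leq C\left(\int_{S(x,r)}\vert D^\sharp f\vert\right)^{Q/(Q-1)}.
\end{equation*}
Patching your argument component by component cannot recover the lost weights.

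A second, independent gap: Lemma \ref{iso} assumes only $f\in W^{1,Q}(\Omega)$, with no continuity hypothesis, whereas your entire first step (admissibility of the triple $(f,B(x,r),y)$, Lemma \ref{lem2.1}, constancy of the degree on components of $\mathbb{G}\setminus f(S(x,r))$) presupposes a continuous, indeed proper, map. To even define ${\rm deg}(f,B(x,r),\cdot)$ for a.e. $r$ one needs either an approximation-by-smooth-maps argument with control of the boundary traces, or continuity of the restriction $f\vert_{S(x,r)}$; the latter is precisely the kind of Sobolev-embedding-on-CC-spheres statement that this paper emphasizes is \emph{not} available on Carnot groups of $H$-type. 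So the admissibility assertion cannot simply be read off from Fubini-type slicing; making it rigorous is part of the analytic content that your outline, like the paper, ultimately defers to \cite{Vod07}.
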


By virtue of Lemma \ref{iso}, we obtain the following result.

\begin{lem}\label{lem0}
 Suppose that  $1/p+1/q<1$ with $1<p,q\leq\infty$ and  $\mathbb{G}$ is a Carnot group of $H$-type. Let $\Omega\subset \mathbb{G}$ be an bounded domain and $f:\Omega\rightarrow \mathbb{G}$ be a mapping of class $W^{1,Q}(\Omega)$ satisfying  the following inequality
$$\vert D_{h}f(x)\vert^{Q}\leq K(x)J_{f}(x)+\Sigma(x)$$
for almost every $x\in\Omega.$ If $K\in L^{p}(\Omega),$ $\Sigma/K\in L^{q}(\Omega)$ and $K$ satisfies $(\mathcal{K}, Q-1)$-spherical condition on $\Omega$,
 then there exists $A=A(\|K\|_{L^{p}(\Omega)},\|\Sigma/K\|_{L^{q}(\Omega)})>0,$ such that the followings inequality
$$\int_{B(x,r)}\frac{\mid D_{h}f\mid^{Q}}{K}\leq I\mathcal{K}r\cdot\int_{S(x,r)}\frac{\mid D_{h}f\mid^{Q}}{K}+A\cdot r^{Q(1-1/p-1/q)}$$
holds for almost every $x\in\Omega$ and for every $r\in(0,{\rm dist}(x,\partial\Omega)).$ In particular, there exists a constant $C=C(Q,  p, q,\mathcal{K},\Vert D_{h}f\Vert_{L^{Q}(\Omega)})>0,$ such that the following results hold:

(i) If $Q(1-1/p-1/q)<1/I\mathcal{K},$ then we have 
$$\int_{B(x,r)}\frac{\abs{ D_{h}f}^{Q}}{K}\leq Cr^{Q(1-1/p-1/q)}$$
for almost every $r\in[0,R],$ where $R<{\rm dist}(x,\partial\Omega).$

(ii) If $Q(1-1/p-1/q)=1/I\mathcal{K},$ then we have 
$$\int_{B(x,r)}\frac{\abs{ D_{h}f}^{Q}}{K}\leq Cr^{Q(1-1/p-1/q)}\log\left(\frac{Re^{1+I\mathcal{K}}}{r}\right)$$
for almost every $r\in[0,R],$ where $R<{\rm dist}(x,\partial\Omega).$

(iii) If $Q(1-1/p-1/q)>1/I\mathcal{K},$ then we have 
$$\int_{B(x,r)}\frac{\abs{ D_{h}f}^{Q}}{K}\leq Cr^{1/I\mathcal{K}}$$
for almost every $r\in[0,R],$ where $R<{\rm dist}(x,\partial\Omega).$

\end{lem}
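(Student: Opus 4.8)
The plan is to reduce the asserted integral inequality to a first-order differential inequality for the monotone quantity
$$\Phi(r):=\int_{B(x,r)}\frac{|D_{h}f|^{Q}}{K},$$
and then integrate it in the three regimes. Since $K\ge 1$ and $|D_{h}f|^{Q}\ge 0$, the integrand is nonnegative, so $\Phi$ is nondecreasing and, by the coarea formula, absolutely continuous with $\Phi'(r)=\int_{S(x,r)}\frac{|D_{h}f|^{Q}}{K}$ for almost every $r$; note also $\Phi(R)<\infty$ because $\frac{|D_{h}f|^{Q}}{K}\le|D_{h}f|^{Q}\in L^{1}$. First I would divide the distortion inequality by $K\ge 1$ to get $\frac{|D_{h}f|^{Q}}{K}\le J_{f}+\frac{\Sigma}{K}$ almost everywhere, and integrate over $B(x,r)$:
$$\Phi(r)\le\int_{B(x,r)}J_{f}+\int_{B(x,r)}\frac{\Sigma}{K}\le\left|\int_{B(x,r)}J_{f}\right|+\int_{B(x,r)}\Sigma,$$
using $K\ge 1$ in the last step. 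Here the absolute-value form of Lemma \ref{iso} is essential: because $J_{f}$ may change sign (the very difficulty stressed in the Remark), only $|\int_{B}J_{f}|$ is controlled, and Lemma \ref{iso} bounds it by $I\big(\int_{S(x,r)}|D^{\sharp}f|\big)^{Q/(Q-1)}$.

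The heart of the matter is the sphere estimate. Using the cofactor-type bound $|D^{\sharp}f|\le|D_{h}f|^{Q-1}$ and then Hölder on $S(x,r)$ with conjugate exponents $\frac{Q}{Q-1}$ and $Q$, applied to the split $|D_{h}f|^{Q-1}=\big(|D_{h}f|^{Q}/K\big)^{(Q-1)/Q}\cdot K^{(Q-1)/Q}$, I obtain
$$\int_{S(x,r)}|D^{\sharp}f|\le\left(\int_{S(x,r)}\frac{|D_{h}f|^{Q}}{K}\right)^{(Q-1)/Q}\left(\int_{S(x,r)}K^{Q-1}\right)^{1/Q}.$$
Raising to the power $Q/(Q-1)$ produces the good term to the \emph{first} power, namely $\Phi'(r)$, multiplied by $\big(\int_{S}K^{Q-1}\big)^{1/(Q-1)}$. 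The exponent $Q-1$ is aligned precisely so that the $(\mathcal{K},Q-1)$-spherical condition applies, giving $\big(\int_{S}K^{Q-1}\big)^{1/(Q-1)}\le\mathcal{K}\,m(S(x,r))^{1/(Q-1)}$, which together with $m(S(x,r))\sim r^{Q-1}$ contributes the factor $I\mathcal{K}\,r$ (after absorbing dimensional constants from the normalization). This matching of the Hölder exponents with the spherical exponent $Q-1$, and the homogeneity $|D^{\sharp}f|\lesssim|D_{h}f|^{Q-1}$, is exactly what makes $\Phi'$ enter linearly rather than to the power $1/(Q-1)$.

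For the remaining term I would estimate $\int_{B}\Sigma=\int_{B}\frac{\Sigma}{K}\cdot K$ by the generalized Hölder inequality with exponents $q$, $p$ and $(1-1/p-1/q)^{-1}$ (legitimate precisely because $1/p+1/q<1$), giving $\int_{B}\Sigma\le\|\Sigma/K\|_{L^{q}(\Omega)}\|K\|_{L^{p}(\Omega)}\,r^{Q(1-1/p-1/q)}$, which furnishes the constant $A$. Combining the three paragraphs yields the asserted inequality $\Phi(r)\le I\mathcal{K}\,r\,\Phi'(r)+A\,r^{Q(1-1/p-1/q)}$ for almost every $r$.

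Finally I would integrate this inequality. Writing $\alpha=Q(1-1/p-1/q)$ and $\beta=1/(I\mathcal{K})$, rewrite it as $\frac{d}{dr}\big(r^{-\beta}\Phi\big)\ge-\frac{A}{I\mathcal{K}}\,r^{\alpha-1-\beta}$ almost everywhere, valid since $r^{-\beta}\Phi$ is absolutely continuous on intervals bounded away from $0$. Integrating from $r$ to $R$ and multiplying back by $r^{\beta}$ bounds $\Phi(r)$ by a term from $\Phi(R)$ (finite) plus one governed by $\int_{r}^{R}s^{\alpha-1-\beta}\,ds$, whose value splits into exactly the three cases of the statement: for $\alpha<\beta$ the lower endpoint dominates and, since $r^{\beta}\le R^{\beta-\alpha}r^{\alpha}$ for $r\le R$, leaves $r^{\alpha}$; for $\alpha=\beta$ it produces the logarithmic factor $\log(Re^{1+I\mathcal{K}}/r)$; and for $\alpha>\beta$ the integral stays bounded and leaves $r^{\beta}=r^{1/(I\mathcal{K})}$. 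I expect the main obstacle to be the sphere estimate of the second paragraph, that is, securing the \emph{linear} dependence on $\Phi'(r)$ with coefficient exactly $I\mathcal{K}\,r$ by aligning the Hölder exponents with the spherical condition and the scaling $m(S(x,r))\sim r^{Q-1}$; once that differential inequality is in hand, the concluding integration is routine.
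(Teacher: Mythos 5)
Your proposal is correct and follows essentially the same route as the paper: dividing the distortion inequality by $K$, bounding $\int_{B}J_{f}$ via the isoperimetric inequality of Lemma \ref{iso}, splitting $\vert D_{h}f\vert^{Q-1}=(\vert D_{h}f\vert^{Q}/K)^{(Q-1)/Q}K^{(Q-1)/Q}$ on the sphere so that H\"{o}lder and the $(\mathcal{K},Q-1)$-spherical condition produce the linear term $I\mathcal{K}r\,\varphi'(r)$, handling $\Sigma$ by the triple H\"{o}lder inequality with exponents $p,q,W$, and invoking the coarea formula to obtain the differential inequality. The only difference is cosmetic: the paper concludes (i)--(iii) by citing \cite[Lemma 3.2]{KO221}, whereas you integrate the differential inequality explicitly with the integrating factor $r^{-1/(I\mathcal{K})}$, which is exactly the content of that cited lemma.
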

\begin{proof}
We fix a point $x\in\Omega.$  Then, by the definition of $f,$ we get for almost all $r\in(0,{\rm dist}(x,\partial\Omega)),$
\begin{equation}\label{e1}
\int_{B(x,r)}\frac{\abs{ D_{h}f}^{Q}}{K}\leq\int_{B(x,r) }J_{f}+\int_{B(x,r)}\frac{\Sigma}{K}.
\end{equation}
By the isopermetric inequality for Sobolev maps in Lemma \ref{iso} and the $(\mathcal{K},Q-1)-$spherical condition of $K$ and H\"{o}lder inequality, we obtain
\begin{equation}\label{e2}
\begin{aligned}
\int_{B(x,r) }J_{f}\leq& I\left(\int_{ S(x,r)}\abs{ D^{\sharp}f}\right)^{\frac{Q}{Q-1}}\\
\leq& I\left(\int_{S(x,r)}\abs{D_{h}f}^{Q-1}\right)^{\frac{Q}{Q-1}}\\
\leq& I\left(\int_{S(x,r)}K^{Q-1}\right)^{\frac{1}{Q-1}}\cdot\int_{S(x,r)}\frac{\abs{ D_{h}f}^{Q}}{K}\\
\leq& I\mathcal{K}r\cdot\int_{S(x,r)}\frac{\abs{D_{h}f}^{Q}}{K}\\
\end{aligned}
\end{equation}
for almost all $r\in(0,{\rm dist}(x,\partial\Omega)).$ Let $W>0$ such that $1/p+1/q+1/W=1.$ Then we have
\begin{equation}\label{e3}
\begin{aligned}
&\int_{B(x,r)}\frac{\Sigma}{K}\leq\int_{B(x,r)}\Sigma=\int_{B(x,r)}K\cdot\frac{\Sigma}{K}\cdot1\\
\leq& \left(\int_{B(x,r)}K^{p}\right)^{\frac{1}{p}}\cdot\left(\int_{B(x,r)}\left(\frac{\Sigma}{K}\right)^{q}\right)^{\frac{1}{q}}\cdot\left(\int_{B(x,r)}1^{W}\right)^{\frac{1}{W}}\leq A\cdot r^{\frac{Q}{W}},\\
\end{aligned}
\end{equation}
where $A=A(\|K\|_{L^{p}(\Omega)},\|\Sigma/K\|_{L^{q}(\Omega)})>0.$
If we denote $\rho_{c}(x):=d_{c}(x,0),$
$$\varphi(r)=\int_{B(x,r)}\frac{\mid D_{h}f\mid^{Q}}{K}\;\;\;{\rm and}\;\;\;\phi(r)=\int_{S(x,r)}\frac{\abs{D_{h}f}^{Q}}{K}.$$
 Then,  as $\vert D_{h}f\vert^{Q}/K\leq\vert D_{h}f\vert^{Q}\in L_{loc}^{1}(\Omega)$ and $\vert\nabla_{h}\rho_{c}(x)\vert=1$ with $x=(w,z)\in \mathbb{G}$ and $w\neq0,$ 
 we get by using of coarea formula in \cite[(2.13)]{Vod07} that $\varphi'(r)=\phi(r)$ for almost every $r\in(0,{\rm dist}(x,\partial\Omega)).$ Hence, we obtain the following inequality by combining inequalities (\ref{e1}), (\ref{e2}) and (\ref{e3}),
$$\varphi(r)\leq I\mathcal{K}r\cdot\varphi'(r)+A\cdot r^{\frac{Q}{W}}.$$
Now, the proof of (i)-(iii) follow from \cite[Lemma 3.2]{KO221}. 
\end{proof}

The following Poincar\'{e} inequality in the setting of Carnot group can be found in \cite{Bas14, DGP09, GN96, HK00, Jer86, Lu92}.
\begin{lem}\label{-11}
Suppose that $\Omega\subset\mathbb{G}$ is a bounded domain and $p\geq1$. If $f\in W^{1,p}(\Omega),$ then there exists a constant $C>0,$ such that the following inequality
\begin{equation}\label{0311888888000}
\left(\int_{B(x,r)}\vert f(x)-f_{B(x,r)}\vert^{p}dx\right)^{1/ p}\leq Cr\left(\int_{B(x,r)}\vert \nabla_{h}f(x)\vert^{p}dx\right)^{1/p}
\end{equation}
holds for all $x\in\Omega$ and $r>0$ such that $B(x,r)\subset\Omega.$
\end{lem}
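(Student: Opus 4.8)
The inequality \eqref{0311888888000} is the classical $(p,p)$-Poincar\'e inequality on Carnot groups and is established in detail in each of the cited references; here I only indicate the strategy I would follow. The plan is to pass to a pointwise Riesz-potential estimate for $f-f_{B}$ and then to exploit the $L^{p}$-boundedness of the corresponding fractional integral operator on the space of homogeneous type $(\mathbb{G}, d_{c}, dx)$, in which the normalization $m(B(x,r))=r^{Q}$ already provides the doubling property. First I would reduce to the case of smooth $f$: since $C^{\infty}(\Omega)\cap W^{1,p}(\Omega)$ is dense in $W^{1,p}(\Omega)$ and both sides of \eqref{0311888888000} depend continuously on $f$ in the $W^{1,p}$-norm, the general statement follows by approximation.

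The central step is to prove, for a fixed ball $B:=B(x_{0},r)\subset\Omega$ and every $x\in B$, the pointwise bound
\[
|f(x)-f_{B}|\leq C\int_{B}\frac{|\nabla_{h}f(y)|}{d_{c}(x,y)^{Q-1}}\,dy.
\]
To obtain it I would use the Chow--Rashevskii connectivity of $\mathbb{G}$: any two points $x,y\in B$ can be joined by a horizontal curve $\gamma$ whose length is comparable to $d_{c}(x,y)$, which is in turn controlled by $r$ through the quasi-triangle inequality \eqref{00oo}. Writing $f(x)-f(y)=-\int_{0}^{1}\langle\nabla_{h}f(\gamma(t)),\dot{\gamma}(t)\rangle\,dt$ along such a curve, averaging over $y\in B$, and then applying Fubini's theorem together with the measure estimate $m(B(x,\rho))=\rho^{Q}$ to bound the resulting geometric weights yields the displayed potential inequality.

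It then remains to establish the $L^{p}$-mapping property of the Riesz potential $\mathcal{I}g(x):=\int_{B}d_{c}(x,y)^{-(Q-1)}g(y)\,dy$. Since $d_{c}$ is symmetric and $\int_{B}d_{c}(x,y)^{-(Q-1)}\,dy\leq Cr$ uniformly in $x\in B$ (which follows from $m(B(x,\rho))=\rho^{Q}$, hence $m(S(x,\rho))\approx\rho^{Q-1}$, via a coarea computation), Schur's test gives $\|\mathcal{I}g\|_{L^{p}(B)}\leq Cr\|g\|_{L^{p}(B)}$ for every $p\in[1,\infty]$. Taking $g=|\nabla_{h}f|$ and combining with the pointwise bound produces \eqref{0311888888000}. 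The step I expect to be the main obstacle is the pointwise potential estimate: it rests essentially on the sub-Riemannian geometry, namely horizontal connectivity and the length control $l(\gamma)\approx d_{c}$, for which there is no counterpart of the elementary radial integration available in $\mathbb{R}^{n}$; once it is in place, the reduction by density and the Schur-test argument are standard facts on spaces of homogeneous type.
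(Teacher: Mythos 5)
The paper itself offers no proof of this lemma: it is quoted as a known Poincar\'e inequality with citations to Jerison, Garofalo--Nhieu, Haj\l{}asz--Koskela, Lu, Danielli--Garofalo--Phuc and Basalaev, so the only meaningful comparison is with the standard proofs in that literature, which do follow your general outline (pointwise Riesz-potential estimate plus $L^{p}$-boundedness of the fractional integral). Your Schur-test step is correct as written: the dyadic annular decomposition together with $m(B(x,\rho))=\rho^{Q}$ gives $\int_{B}d_{c}(x,y)^{-(Q-1)}\,dy\leq Cr$ uniformly in $x$, and symmetry of the kernel then yields $\Vert\mathcal{I}g\Vert_{L^{p}(B)}\leq Cr\Vert g\Vert_{L^{p}(B)}$ for all $1\leq p\leq\infty$. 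The reduction to smooth $f$ by density is also unobjectionable.

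The gap is in the central pointwise estimate, and it is twofold. First, the step \qq{average over $y\in B$ and apply Fubini} is not available for an \emph{arbitrary} choice of horizontal curves with $l(\gamma)\approx d_{c}(x,y)$: to convert the curve integrals into the kernel $d_{c}(x,y)^{-(Q-1)}$ you need a curve family whose sweep has controlled Jacobian and bounded multiplicity, and a generic family of near-geodesics gives no such control. On a Carnot group the standard fix is a quantitative form of Chow's theorem: write $x^{-1}y$ as a product of a bounded number of horizontal exponentials $\exp(\xi_{1})\cdots\exp(\xi_{M})$ with $\vert\xi_{i}\vert\leq C\,d_{c}(x,y)$, connect $x$ to $y$ by the corresponding horizontal segments, and use left-invariance of the Haar measure to carry out the change of variables; you flagged this as the main obstacle, but without this construction the estimate is only asserted, not proved. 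Second --- and this you did not flag --- Carnot--Carath\'eodory balls are not geodesically convex, so any such connecting curves leave $B$; what the construction actually yields is $\vert f(x)-f_{B}\vert\leq C\int_{\sigma B}d_{c}(x,y)^{-(Q-1)}\vert\nabla_{h}f(y)\vert\,dy$ with an enlarged ball $\sigma B$, $\sigma>1$, and hence a Poincar\'e inequality with $\sigma B$ on the right-hand side. Passing from that \qq{weak} form to the same-ball inequality stated in the lemma requires the standard but genuinely nontrivial self-improvement step (a Boman-chain or Whitney covering argument, as in Jerison or Haj\l{}asz--Koskela), which is absent from your proposal. With those two ingredients supplied, your outline does become one of the standard proofs of this result.
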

By virtue of Lemma \ref{-11}, we have the following continuous result for real-value functions on a domain of Carnot group of $H$-type.
\begin{prop}\label{09888888111}
Suppose that $\mathbb{G}$ is a Carnot group of $H$-type and $\Omega\subset\mathbb{G}$ is a bounded domain. Let $f:\Omega\rightarrow\mathbb{R}$ be belonging to $W^{1,Q}(\Omega).$ If there exist constants $\alpha>0$ and $\beta\geq0,$ such that the following inequality 
\begin{equation}\label{09}
\int_{B(x,r)}\frac{\abs{ \nabla_{h}f}^{Q}}{K}\leq C r^{\alpha}\left(\log\frac{L}{r}\right)^{\beta}
\end{equation}
holds for almost every $x\in\Omega$ and $0<r<{\rm dist}(x,\partial\Omega)$ and $K$ satisfies  $(\mathcal{K}, Q-1)$-condition on $\Omega,$ then we have
\begin{equation}\label{90oiiiii}
\vert f(y)-f(z)\vert\leq C_{1}(d_{c}(z,y))^{\frac{\alpha}{Q}}\left(C_{2}\cdot\left(\log\frac{L}{d_{c}(z,y)}\right)^{\frac{\beta}{Q}}+C_{3}\right)
\end{equation}
for every $y,z\in B(x,r)$ and $r$ is small enough. Here, $C_{1}=C(C,Q,\mathcal{K},\beta)$ is a constant depends only on $C, Q,\mathcal{K}$ and $\beta,$ 
 $$C_{2}=\frac{e^{\frac{\alpha}{Q}}}{e^{\frac{\alpha}{Q}}-1}\;\;\; {\rm and} \;\;\;C_{3}=\sum_{i=0}^{\infty}\frac{i^{\frac{\beta}{Q}}}{e^{i\frac{\alpha}{Q}}}<\infty.$$
\end{prop}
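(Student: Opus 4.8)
The plan is to run a Morrey--Campanato type telescoping argument on concentric balls, combining the Poincar\'e inequality (Lemma \ref{-11}) with a H\"older step at exponent $Q-1$ that converts the hypothesis \eqref{09} into a bound on integrals of $|\nabla_h f|^{Q-1}$. Since \eqref{09} holds for a.e. center, I fix such a point $y$ and set $r_i=\rho e^{-i}$ and $B_i=B(y,r_i)$ for $i=0,1,2,\dots$, where $\rho$ is a small initial radius. The first step is to control the oscillation of the mean values $f_{B_i}$ between consecutive scales. Because $B_{i+1}\subset B_i$ with $m(B_i)/m(B_{i+1})=e^{Q}$, Jensen's inequality together with this inclusion gives $|f_{B_i}-f_{B_{i+1}}|\le e^{Q/(Q-1)}\big(m(B_i)^{-1}\int_{B_i}|f-f_{B_i}|^{Q-1}\big)^{1/(Q-1)}$, and the averaged form of Poincar\'e at exponent $Q-1$ bounds the right-hand side by $C_{P}r_i\big(m(B_i)^{-1}\int_{B_i}|\nabla_h f|^{Q-1}\big)^{1/(Q-1)}$.

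The crucial point is absorbing $K$. Writing $|\nabla_h f|^{Q-1}=(|\nabla_h f|^{Q}/K)^{(Q-1)/Q}K^{(Q-1)/Q}$ and applying H\"older with exponents $Q/(Q-1)$ and $Q$,
\[
\int_{B_i}|\nabla_h f|^{Q-1}\le\left(\int_{B_i}\frac{|\nabla_h f|^{Q}}{K}\right)^{\frac{Q-1}{Q}}\left(\int_{B_i}K^{Q-1}\right)^{\frac{1}{Q}}.
\]
The exponent $Q-1$ is chosen precisely so that the $(\mathcal{K},Q-1)$-condition applies to the second factor, giving $\int_{B_i}K^{Q-1}\le\mathcal{K}^{Q-1}m(B_i)$; inserting \eqref{09} into the first factor and using $m(B_i)=r_i^{Q}$, the resulting power $r_i^{-1}$ exactly cancels the factor $r_i$ from Poincar\'e, so that $|f_{B_i}-f_{B_{i+1}}|\le\tilde C\, r_i^{\alpha/Q}\big(\log(L/r_i)\big)^{\beta/Q}$, with $\tilde C$ depending only on $C$, $Q$, $\mathcal{K}$ and the structural Poincar\'e constant.

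Next I would telescope. Since $\alpha>0$ the per-scale bounds are summable, so $\{f_{B_i}\}$ is Cauchy and converges to a value $\bar f(y)$ that agrees with $f$ at Lebesgue points; the uniform modulus obtained below shows $\bar f$ extends continuously and is the desired representative. Summing and using $\log(L/r_i)=\log(L/\rho)+i$ together with $(\log(L/\rho)+i)^{\beta/Q}\le c_\beta\big((\log(L/\rho))^{\beta/Q}+i^{\beta/Q}\big)$ splits the tail into the two convergent series $C_2=\sum_{i}e^{-i\alpha/Q}$ and $C_3=\sum_{i}i^{\beta/Q}e^{-i\alpha/Q}$, yielding $|\bar f(y)-f_{B_0}|\le C_1\rho^{\alpha/Q}\big(C_2(\log(L/\rho))^{\beta/Q}+C_3\big)$. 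Finally, for two points $y,z$ with $d=d_c(z,y)$, I would take $\rho$ comparable to $d$, run the chain from both $y$ and $z$, and compare the two top means $f_{B_0^{y}}$, $f_{B_0^{z}}$ through a common enclosing ball $B(y,2\lambda d)$ by one further estimate of the same Poincar\'e-plus-H\"older type; the triangle inequality then assembles the three contributions into \eqref{90oiiiii}.

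The main obstacle is the bookkeeping of the logarithmic factor: one must verify that the geometric decay $e^{-i\alpha/Q}$, available only because $\alpha>0$, dominates the polynomial-logarithmic growth so that $C_3<\infty$, and must split $(\log(L/\rho)+i)^{\beta/Q}$ cleanly to recover exactly the stated constants. Conceptually, however, the essential ingredient is the H\"older step at exponent $Q-1$: because \eqref{09} controls only $\int|\nabla_h f|^{Q}/K$ rather than $\int|\nabla_h f|^{Q}$, this is the sole place where the $(\mathcal{K},Q-1)$-condition enters, and it is exactly what forces the Poincar\'e scale factor to cancel.
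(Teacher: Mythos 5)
Your proposal is correct and follows essentially the same route as the paper's proof: a telescoping chain of geometrically shrinking balls ($r_i=e^{-i}r_0$), the Poincar\'e inequality at exponent $Q-1$, the H\"older splitting $\vert\nabla_h f\vert^{Q-1}=\left(\vert\nabla_h f\vert^{Q}/K\right)^{(Q-1)/Q}K^{(Q-1)/Q}$ combined with the $(\mathcal{K},Q-1)$-condition so that the Poincar\'e factor $r_i$ cancels exactly, and the same split of $\left(\log(L/r_0)+i\right)^{\beta/Q}$ producing the two series $C_2$ and $C_3$. The only (immaterial) difference is the chaining geometry: the paper runs a single two-sided chain $\{B_i\}_{i\in\mathbb{Z}}$ through the midpoint $w$ of $y$ and $z$, whereas you run two one-sided chains from $y$ and $z$ and join their top means through a common enclosing ball.
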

\begin{proof}
The method to prove this proposition is similar the one given in \cite{DKO24}. Firstly, we consider the Lebesgue points of $f.$ We fix a ball $B=B(x,c_{1}R)$ with $0<c_{1}R<<{\rm dist}(x,\partial\Omega),$ such that 
$B$ is geodesically convex.  Here, $c_{1}$ is the constant in the generalized triangle inequality (\ref{00oo}). We suppose that $A\subset B$ is the set of all Lebesgue points of $f$ in $B$ and then show that $f$ satisfies inequality (\ref{90oiiiii}) on $A\cap B(x,R/4).$ Now, we take every $y,z\in A\cap B(x,R/4).$ Then, there exists a $w\in B(x,R/4),$ such that $d_{c}(z,w)=d_{c}(w,y)=d_{c}(z,y)/2.$ Now, we choose the sequence balls $B_{i}\subset B(x,c_{1}R),i\in\mathbb{Z},$ such that $B_{0}=B(w,r_{0}),0<r_{0}=d_{c}(z,y)<R/2$ and $B_{i}=B(y,e^{-\vert i\vert}r_{0})$ when $i\in \mathbb{Z}_{>0},$ and $B_{i}=B(z,e^{-\vert i\vert}r_{0})$ when $i\in \mathbb{Z}_{<0}.$ Since $y$ and
$z$ are Lebesgue points, we get 
$\lim_{i\rightarrow\infty}f_{B_{i}}=f(y)$ and $\lim_{i\rightarrow-\infty}f_{B_{i}}=f(z),$ where the symbol $f_{B_{i}}$ stands for the integral average of $f$ over $B_{i}.$ Now, for every $i\in\mathbb{Z},$ we get by inequality (\ref{0311888888000}) and H\"{o}lder inequality,

\begin{equation}
\begin{aligned}
&\vert f_{B_{i+1}}-f_{B_{i}}\vert\leq\int_{B_{i+1}}\!\!\!\!\!\!\!\!\!\!\!\!\!\!\!\!\!\; {}-{} \,\,\,\,\,\,\, \vert f-f_{B_{i}}\vert\\
\leq &e^{-Q}\int_{B_{i}}\!\!\!\!\!\!\!\!\!\!\!\!\; {}-{} \,\,\,\,\,\,\, \vert f-f_{B_{i}}\vert\leq e^{-Q}\left(\int_{B_{i}}\!\!\!\!\!\!\!\!\!\!\!\!\; {}-{} \,\,\,\,\,\,\, \vert f-f_{B_{i}}\vert^{Q-1}\right)^{\frac{1}{Q-1}}\\
\leq& C(Q)r_{i}\left(\int_{B_{i}}\!\!\!\!\!\!\!\!\!\!\!\!\; {}-{} \,\,\,\,\,\,\, \vert \nabla_{h}f\vert^{Q-1}\right)^{\frac{1}{Q-1}}\\
= &C(Q)r_{i}\left(\int_{B_{i}}\!\!\!\!\!\!\!\!\!\!\!\!\; {}-{} \,\,\,\,\,\,\, 
 \left(\frac{\vert\nabla_{h}f\vert^{Q}}{K}\right)^{\frac{Q-1}{Q}}K^{\frac{Q-1}{Q}}\right)^{\frac{1}{Q-1}}\\
\leq &C(Q)r_{i}\left(\int_{B_{i}}\!\!\!\!\!\!\!\!\!\!\!\!\; {}-{} \,\,\,\,\,\,\, \frac{\vert \nabla_{h}f\vert^{Q}}{K}\right)^{\frac{1}{Q}}\cdot\left(\int_{B_{i}}\!\!\!\!\!\!\!\!\!\!\!\!\; {}-{} \,\,\,\,\,\,\,  K^{Q-1}\right)^{\frac{1}{Q^{2}-Q}}\\
\leq& C(Q,\mathcal{K})\left(\int_{B_{i}}\frac{\vert \nabla_{h}f\vert^{Q}}{K}\right)^{\frac{1}{Q}}\leq  C(C, Q,\mathcal{K})r_{i}^{\frac{\alpha}{Q}}\left(\log\frac{L}{r_{i}}\right)^{\frac{\beta}{Q}}.\\
\end{aligned}
\end{equation}
Therefore, we have 
\begin{equation}
\begin{aligned}
&\vert f(y)-f(z)\vert\leq \sum_{i=-\infty}^{\infty}\vert f_{B_{i+1}}-f_{B_{i}}\vert\\
\leq& 2C(C,Q,\mathcal{K})\sum_{i=0}^{\infty}r_{i}^{\frac{\alpha}{Q}}\left(\log\frac{L}{r_{i}}\right)^{\frac{\beta}{Q}}\\
=& 2C(C,Q,\mathcal{K})(d_{c}(z,y))^{\frac{\alpha}{Q}}\sum_{i=0}^{\infty}e^{-i\frac{\alpha}{Q}}\left(i+\log\frac{L}{d_{c}(z,y)}\right)^{\frac{\beta}{Q}}\\
\leq& C(C,Q,\mathcal{K},\beta)(d_{c}(z,y))^{\frac{\alpha}{Q}}\sum_{i=0}^{\infty}e^{-i\frac{\alpha}{Q}}\left(i^{\frac{\beta}{Q}}+\left(\log\frac{L}{d_{c}(z,y)}\right)^{\frac{\beta}{Q}}\right)\\
=& C(C,Q,\mathcal{K},\beta)(d_{c}(z,y))^{\frac{\alpha}{Q}}\left(\left(\log\frac{L}{d_{c}(z,y)}\right)^{\frac{\beta}{Q}}\sum_{i=0}^{\infty}e^{-i\frac{\alpha}{Q}}+\sum_{i=0}^{\infty}\frac{i^{\frac{\beta}{Q}}}{e^{i\frac{\alpha}{Q}}}\right)\\
=& C(C,Q,\mathcal{K},\beta)(d_{c}(z,y))^{\frac{\alpha}{Q}}\left(\left(\log\frac{L}{d_{c}(z,y)}\right)^{\frac{\beta}{Q}}\frac{e^{\frac{\alpha}{Q}}}{e^{\frac{\alpha}{Q}}-1}+C(Q,\alpha,\beta)\right),\\
\end{aligned}
\end{equation}
where $C(Q,\alpha,\beta)=\sum_{i=0}^{\infty}\frac{i^{\frac{\beta}{Q}}}{e^{i\frac{\alpha}{Q}}}.$
Hence, we have proved that  $f$ satisfies inequality (\ref{90oiiiii}) on $A\cap B(x,R/4).$ The proof  that $f$ satisfies inequality (\ref{90oiiiii}) on $ B(x,R/4)\setminus A$ is the same as the one given in \cite{DKO24}. Hence we omit it.
\end{proof}
Based on the above preparations, we can give the $\mathbf{proof\,of\,Theorem}$ \ref{thm20}. Here we should deal with carefully because the mappings in Proposition \ref{09888888111} are real-value.
\begin{proof}
According to Lemma
\ref{lem0}, we see that there exist constants $\alpha>0$ and $\beta\geq0,$ such that the following inequality 
\begin{equation}\label{96giiuo}
\int_{B(x,r)}\frac{\abs{ D_{h}f}^{Q}}{K}\leq C r^{\alpha}\left(\log\frac{c_{1}R_{0}e^{1+I\mathcal{K}}}{r}\right)^{\beta}
\end{equation}
holds for almost every $x\in\Omega$ and $0<r<c_{1}R_{0}<{\rm dist}(x,\partial\Omega).$ Here, the constant 
 \begin{equation}\label{96giiuasaso}
  \alpha=\left\{
\begin{aligned}
&\min\Big\{1-\frac{1}{p}-\frac{1}{q},\frac{1}{QI\mathcal{K}}\Big\},       &      & {\rm if}\; {1-\frac{1}{p}-\frac{1}{q}\neq\frac{1}{QI\mathcal{K}}},\\
&\frac{1}{I\mathcal{K}},       &      &{\rm if}\; {1-\frac{1}{p}-\frac{1}{q}=\frac{1}{QI\mathcal{K}}},
\end{aligned} \right.
\end{equation}
 \begin{equation}\label{96giiuasasosdsscsccvr}
  \beta=\left\{
\begin{aligned}
&0,       &      & {\rm if}\; {1-\frac{1}{p}-\frac{1}{q}\neq\frac{1}{QI\mathcal{K}}},\\
&1,       &      &{\rm if}\; {1-\frac{1}{p}-\frac{1}{q}=\frac{1}{QI\mathcal{K}}}
\end{aligned} \right.
\end{equation}
and $C$ is a positive constant depends only on $Q,  p, q,\mathcal{K}$ and $\Vert D_{h}f\Vert_{L^{Q}(\Omega)}.$ Therefore, there exist constants $\alpha>0$ and $\beta\geq0,$ such that the following inequality 
\begin{equation}
\int_{B(x,r)}\frac{\abs{ \nabla_{h}\rho\circ f}^{Q}}{K}\leq\int_{B(x,r)}\frac{\abs{ D_{h}f}^{Q}}{K}\leq C r^{\alpha}\left(\log\frac{c_{1}R_{0}e^{1+I\mathcal{K}}}{r}\right)^{\beta}
\end{equation}
holds for almost every $x\in\Omega$ and $0<r<c_{1}R_{0}<{\rm dist}(x,\partial\Omega).$ In addition, it is easy to see that $\rho\circ f\in W_{loc}^{1,Q}(\Omega).$
Hence, according to Proposition \ref{09888888111}, we have $\rho\circ f\in\mathcal{C}(\Omega).$ Without loss of generality, we may 
assume that $\rho\circ f$ is bounded on $\Omega.$ Now, from the proof of \cite[Proposition 5]{Vod96}, we see that there exists a constant $C(\Omega),$ such that 
\begin{equation}\label{100000}
\vert\nabla_{h}f_{j}\vert\leq C(\Omega)\vert D_{h}f\vert
\end{equation} for all $j=1,2,\cdots,n,\cdots,N,$ where $N={\rm dim}\mathbb{G}.$ With inequality (\ref{100000}) and $\vert f_{j}\vert\preceq\rho\circ f$ or $\vert f_{j}\vert\preceq(\rho\circ f)^{2},$ we have
$f_{j}\in W_{loc}^{1,Q}(\Omega)$ all $j=1,2,\cdots,n,\cdots,N={\rm dim}\mathbb{G}.$
On the other hand, inequalities  (\ref{96giiuo})  and (\ref{100000}) give us that there exist constants $\alpha>0$ and $\beta\geq0,$ which given by (\ref{96giiuasaso}) and (\ref{96giiuasasosdsscsccvr}), such that the following inequality 
\begin{equation}
\int_{B(x,r)}\frac{\abs{ \nabla_{h}f_{j}}^{Q}}{K}\leq C(Q,\Omega)\int_{B(x,r)}\frac{\abs{ D_{h}f}^{Q}}{K}\leq C_{0} r^{\alpha}\left(\log\frac{c_{1}R_{0}e^{1+I\mathcal{K}}}{r}\right)^{\beta}
\end{equation}
holds for almost every $x\in\Omega$ and $0<r<c_{1}R_{0}<{\rm dist}(x,\partial\Omega)$ and for all $j=1,2,\cdots,n,\cdots,N.$ Here, $C_{0}=C(Q, \Omega)C.$ By Proposition \ref{09888888111}, the following inequality
\begin{equation}
\vert f_{j}(y)-f_{j}(z)\vert\leq C_{1}(d_{c}(z,y))^{\frac{\alpha}{Q}}\left(C_{2}\left(\log\frac{c_{1}R_{0}e^{1+I\mathcal{K}}}{d_{c}(z,y)}\right)^{\frac{\beta}{Q}}+C_{3}\right)
\end{equation}
holds for all $j=1,2,\cdots,n,\cdots,N$ and for every $y,z\in B(x,r/4)$ with $r$ is small enough. Without loss of generality, we may assume that $y=0=f(y).$ Then,  we get
\begin{equation}\label{556116}
\begin{aligned}
&d_{\rho}(f(z),f(y))=\rho\circ f(z)\\
=&\left( \sum_{j=1}^{n}\vert f_{j}(z)\vert^{4}+16\sum_{k=n+1}^{N}\vert f_{j}(z)\vert^{2}\right)^{1/4}\\
\leq&C_{1}(d_{c}(z,y))^{\frac{\alpha}{Q}}\left(C_{2}\left(\log\frac{c_{1}R_{0}e^{1+I\mathcal{K}}}{d_{c}(z,y)}\right)^{\frac{\beta}{Q}}+C_{3}\right)\\
&\;\;\;\;\;\;\;\;\;\;+C_{4}(d_{c}(z,y))^{\frac{\alpha}{2Q}}\left(C_{5}\left(\log\frac{c_{1}R_{0}e^{1+I\mathcal{K}}}{d_{c}(z,y)}\right)^{\frac{\beta}{2Q}}+C_{6}\right).\\
\end{aligned}
\end{equation}
This finishes the proof.
\end{proof}

According to Theorem \ref{thm20} and the Trudinger type inequality on Carnot group established by Saloff-Coste \cite{SC88}, we have the following corollary.
\begin{cor}\label{cor201}
Let $\varepsilon_{0}\in(0,1)$ be a given constant. Suppose that $\mathbb{G}$ is a Carnot group of $H$-type, $y_{0}\in\mathbb{G},$ $\Omega\subset\mathbb{G}$ is a domain and $f:\Omega\rightarrow \mathbb{G}$ is a mapping of class $W_{loc}^{1,Q}(\Omega)$ satisfying  the following inequality
$$\vert D_{h}f(x)\vert^{Q}\leq K(x)J_{f}(x)+\Sigma(x)(\rho\circ l_{y_{0}^{-1}}\circ f(x))^{Q}$$
for almost every $x\in\Omega.$ Let $K\in L_{loc}^{p}(\Omega)$ and $\Sigma\in L_{loc}^{1+\varepsilon}(\Omega),$ where $(1-\varepsilon_{0})(1+\varepsilon)/((1-\varepsilon_{0})\varepsilon-\varepsilon_{0})<p\leq\infty$ and $\varepsilon_{0}/(1-\varepsilon_{0})<\varepsilon\leq\infty.$  If we further assume that $K$  satisfies $(\mathcal{K},Q-1)$-spherical condition on $\Omega$, then $f$ has a continuous representative.
\end{cor}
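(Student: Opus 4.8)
The plan is to recast the quasiregular-value inequality as a generalized finite distortion inequality with a new source term and then invoke Theorem \ref{6666312}. Writing $g := \rho \circ l_{y_{0}^{-1}} \circ f$ and setting $\widetilde{\Sigma} := \Sigma \cdot g^{Q}$, the hypothesis becomes $\vert D_{h}f(x)\vert^{Q} \le K(x) J_{f}(x) + \widetilde{\Sigma}(x)$ for almost every $x$, which is exactly the form treated in Theorem \ref{6666312}. Since $K \in L_{loc}^{p}(\Omega)$ and $K$ satisfies the $(\mathcal{K},Q-1)$-spherical condition by assumption, everything reduces to producing an exponent $q$ with $q>1$, $\tfrac{1}{p}+\tfrac{1}{q}<1$, and $\widetilde{\Sigma}/K \in L_{loc}^{q}(\Omega)$.

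First I would verify that $g \in W_{loc}^{1,Q}(\Omega)$. Left translation $l_{y_{0}^{-1}}$ is a $\rho$-isometry, so $g = d_{\rho}(f(\cdot),y_{0})$ and $\rho(y_{0}^{-1}f) \le c_{1}(\rho(y_{0}^{-1})+\rho(f)) \in L_{loc}^{Q}(\Omega)$ by the definition of $W_{loc}^{1,Q}(\Omega,\mathbb{G})$; the chain-rule bound $\vert\nabla_{h}g\vert \lesssim \vert D_{h}f\vert$ already used in the proof of Theorem \ref{thm20} (smoothness of $\rho$ away from the identity) then shows $\nabla_{h}g \in L_{loc}^{Q}(\Omega)$. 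Because $Q$ is the homogeneous dimension, the Trudinger-type inequality of Saloff-Coste \cite{SC88} applies to $g$ and yields $g \in L_{loc}^{s}(\Omega)$ for every finite $s$; in particular $g^{Q} \in L_{loc}^{m}(\Omega)$ with the finite exponent $m := (1-\varepsilon_{0})(1+\varepsilon)/\varepsilon_{0}$. Now set $q := (1-\varepsilon_{0})(1+\varepsilon)$. The hypothesis $\varepsilon > \varepsilon_{0}/(1-\varepsilon_{0})$ is exactly $q>1$, and a direct computation gives $\tfrac{1}{q} = \tfrac{1}{1+\varepsilon} + \tfrac{1}{m}$. Hence, using $K \ge 1$ so that $\widetilde{\Sigma}/K \le \Sigma\,g^{Q}$, the generalized Hölder inequality gives on each compact $K'\subset\Omega$ that $\Vert \widetilde{\Sigma}/K\Vert_{L^{q}(K')} \le \Vert\Sigma\Vert_{L^{1+\varepsilon}(K')}\Vert g^{Q}\Vert_{L^{m}(K')} < \infty$, so $\widetilde{\Sigma}/K \in L_{loc}^{q}(\Omega)$.

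It remains to check the coupling $\tfrac{1}{p}+\tfrac{1}{q}<1$. Since $(1-\varepsilon_{0})(1+\varepsilon)-1 = (1-\varepsilon_{0})\varepsilon-\varepsilon_{0}$, the conjugate of $q$ is precisely $q' = (1-\varepsilon_{0})(1+\varepsilon)/((1-\varepsilon_{0})\varepsilon-\varepsilon_{0})$, and the standing hypothesis $p > (1-\varepsilon_{0})(1+\varepsilon)/((1-\varepsilon_{0})\varepsilon-\varepsilon_{0})$ says exactly $p>q'$, i.e. $\tfrac{1}{p}+\tfrac{1}{q}<1$. With $K \in L_{loc}^{p}(\Omega)$, $\widetilde{\Sigma}/K \in L_{loc}^{q}(\Omega)$, and the spherical condition on $K$ all in hand, Theorem \ref{6666312} produces a continuous representative of $f$. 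The main obstacle in this argument is the self-referential source term: the extra factor $g^{Q} = (\rho\circ l_{y_{0}^{-1}}\circ f)^{Q}$ depends on $f$ itself, and the only a priori control on it is borderline $W^{1,Q}$ regularity. Converting this into the supercritical integrability needed to feed Theorem \ref{6666312} is exactly what the Trudinger inequality buys us, and the free parameter $\varepsilon_{0}$ precisely records how far past the critical exponent one pushes $g$; this is the step that replaces the Euclidean Sobolev-embedding-on-spheres device and is where the $H$-type structure (via \cite{SC88}) is essential.
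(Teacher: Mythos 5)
Your proposal is correct and follows essentially the same route as the paper's proof: both upgrade the integrability of $\rho\circ l_{y_{0}^{-1}}\circ f$ to all finite exponents via Saloff-Coste's Trudinger inequality, then use H\"{o}lder's inequality with the same $\varepsilon_{0}$-based exponent arithmetic to place $\Sigma(\rho\circ l_{y_{0}^{-1}}\circ f)^{Q}/K$ in $L_{loc}^{q}(\Omega)$ with $1/p+1/q<1$, and finally invoke the continuity theorem (you cite Theorem \ref{6666312} directly, while the paper reduces to a bounded domain and applies Theorem \ref{thm20}). The only other difference is cosmetic: you fix $q=(1-\varepsilon_{0})(1+\varepsilon)$ exactly, whereas the paper takes any $q$ in the open interval $((1-\varepsilon_{0})(1+\varepsilon),\,1+\varepsilon)$; both choices satisfy the required exponent constraints.
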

\begin{proof}
Without loss of generality, we can only consider the case that $\Omega$ is a bounded domain, $y_{0}=0,$  $f\in W^{1,Q}(\Omega),$ $K\in L^{p}(\Omega)$ and $\Sigma\in L^{1+\varepsilon}(\Omega).$  Since $f\in W^{1,Q}(\Omega),$
we see that $\rho\circ f\in W^{1,Q}(\Omega).$ Next, we show that $\rho\circ f\in L_{loc}^{p_{1}}(\Omega)$ for any $p_{1}\geq1.$ For every compact subset $M$ with  $\overline{M}\subset\Omega,$ we take for every 
$x\in M.$  Donate $C_{M}={\rm dist}(\partial M,\partial\Omega)$ and consider  $\eta\in \mathcal{C}_{0}^{\infty}(\Omega),$ such that $\eta\in[0,1]$ on $\Omega,$  $\vert\nabla_{h} \eta\vert\leq 1/C_{M}$ on $B(x,C_{M}),$ $\eta=0$ on $\Omega\setminus B(x, C_{M})$ and $\eta=1$ on $B(x,C_{M}/2).$ By letting $g:=\eta\cdot\rho\circ f,$ then $g\in W_{0}^{1,Q}(\Omega).$ Now, by the Trudinger type inequality on Carnot group established by Saloff-Coste \cite{SC88}, we see that there exist constants  $A_{Q}>0 $ and $C_{0}>0,$ such that
$$\frac{1}{m(\Omega)}\int_{\Omega}\exp\left(A_{Q}\frac{\vert g\vert^{Q'}}{\|\nabla_{h}g\|_{Q}^{Q'}}\right)dx\leq C_{0},$$
where $Q'=Q/(Q-1).$  For any given $p_{1}\geq1,$ we take the minimum positive integer $m,$ such that it satisfies $m> p_{1}(Q-1)/Q.$ Then, by a basic inequality, which says that the inequality $\exp(a)\geq a^{n}/(n!)$ holds for every $a>0$ and every positive integer $n,$ we get
$$\frac{A_{Q}^{m}}{m!\cdot\|\nabla_{h}g\|_{Q}^{mQ'}}\int_{\Omega}\vert g(x)\vert^{\frac{Qm}{Q-1}}dx\leq\int_{\Omega}\exp\left(A_{Q}\frac{\vert g(x)\vert^{Q'}}{\|\nabla_{h}g(x)\|_{Q}^{Q'}}\right)dx\leq C(\Omega).$$
Since
$$\int_{\Omega}\vert\nabla_{h} g(x)\vert^{Q}dx\leq2^{Q-1}\left(\frac{1}{C_{M}^{Q}}\int_{\Omega}\vert\rho\circ f(x)\vert^{Q}dx+\int_{\Omega}\vert D_{h}f(x)\vert^{Q}dx\right),$$
we see that
$$\int_{B(x,C_{M}/2)}\vert \rho\circ f(x)\vert^{\frac{Qm}{Q-1}}dx=\int_{B(x,C_{M}/2)}\vert g(x)\vert^{\frac{Qm}{Q-1}}dx\leq\int_{\Omega}\vert g(x)\vert^{\frac{Qm}{Q-1}}dx\leq C,$$
where $C$ depends only on $M, m,Q,\Omega,\|\rho\circ f\|_{L^{Q}(\Omega)}$ and $\|D_{h} f\|_{L^{Q}(\Omega)}.$
Hence, by H\"{o}lder inequality and the boundedness of $\Omega,$ we get $\rho\circ f\in L_{loc}^{p_{1}}(\Omega)$ for any $p_{1}\geq1.$ 

Now,  we select $q$ such that $(1-\varepsilon_{0})<q/(1+\varepsilon)<1.$ In this situation, we have 
$$\frac{1}{p}+\frac{1}{q}<\frac{(1-\varepsilon_{0})\varepsilon-\varepsilon_{0}}{(1-\varepsilon_{0})(1+\varepsilon)}+\frac{1}{(1-\varepsilon_{0})(1+\varepsilon)}=1.$$ In addition, by using H\"{o}lder inequality, we derive for every compact subset $M$ with  $\overline{M}\subset\Omega$ that 
\begin{equation}
\begin{aligned}
&\int_{M}\left(\frac{\Sigma(\rho\circ f)^{Q}}{K}\right)^{q}dx\leq\int_{M}\Sigma^{q}(\rho\circ f)^{qQ}dx\\
\leq&\left(\int_{M}\Sigma^{1+\varepsilon}dx\right)^{\frac{q}{1+\varepsilon}}\cdot\left(\int_{M}(\rho\circ f)^{\frac{qQ(1+\varepsilon)}{1+\varepsilon-q}}dx\right)^{\frac{1+\varepsilon-q}{1+\varepsilon}}\\
\leq&\left(\int_{\Omega}\Sigma^{1+\varepsilon}dx\right)^{\frac{q}{1+\varepsilon}}\cdot\left(\int_{M}(\rho\circ f)^{\frac{qQ(1+\varepsilon)}{1+\varepsilon-q}}dx\right)^{\frac{1+\varepsilon-q}{1+\varepsilon}}<\infty.\\
\end{aligned}
\end{equation}
Hence, $\Sigma(\rho\circ f)^{Q}/K\in L_{loc}^{q}(\Omega).$ Now, the conclusion of Theorem \ref{thm20} yields  that $f$ has a continuous representative.
\end{proof}

In Corollary \ref{cor201}, if $\varepsilon=\infty,$ then $p$  can be taken any value that is greater or equals to 1. Meanwhile, if $p=\infty,$ then $\varepsilon$ can be taken as any positive number.   
In particular, if we take $K(x)\equiv K\geq1,$ then we can derive the following continuous result for quasiregular values on Carnot group of $H$-type.
\begin{cor}\label{cor20}
Let  $K\geq1$ be a given constant. Suppose that $\mathbb{G}$ is a  Carnot group of $H$-type, $\Omega\subset\mathbb{G}$ is a domain, $y_{0}\in\mathbb{G}$ and $f:\Omega\rightarrow \mathbb{G}$ is a mapping of class $W_{loc}^{1,Q}(\Omega)$ satisfying  the following inequality
$$\vert D_{h}f(x)\vert^{Q}\leq KJ_{f}(x)+\Sigma(x)(\rho\circ l_{y_{0}^{-1}}\circ f(x))^{Q}$$
for almost every $x\in\Omega.$ If  $\Sigma\in L_{loc}^{p}(\Omega)$ for some  $p>1,$
 then $f$ has a continuous representative.
\end{cor}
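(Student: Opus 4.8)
The plan is to deduce this corollary directly from Corollary \ref{cor201} by specializing the variable distortion function there to the constant $K$, so the task reduces to checking that a constant distortion meets every hypothesis of that corollary. First I would note that since $K\geq1$ is a constant it lies in $L_{loc}^{\infty}(\Omega)$, hence in $L_{loc}^{p}(\Omega)$ for every $p$; in the notation of Corollary \ref{cor201} this permits the choice $p=\infty$. Moreover a constant trivially satisfies the $(\mathcal{K},Q-1)$-spherical condition, since for every sphere $\partial B(x,r)\subset\Omega$ one computes
$$\left(\frac{\int_{\partial B(x,r)}K^{Q-1}\,d\sigma}{m(\partial B(x,r))}\right)^{\frac{1}{Q-1}}=K,$$
so that $\mathcal{K}=K$ is admissible.

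For the function $\Sigma$, to avoid a clash with the symbol $p$ just used for the distortion, I would rewrite the hypothesis $\Sigma\in L_{loc}^{p}(\Omega)$ with $p>1$ as $\Sigma\in L_{loc}^{1+\varepsilon}(\Omega)$ with $\varepsilon:=p-1>0$. By the remark following Corollary \ref{cor201}, once $p=\infty$ the auxiliary exponent $\varepsilon$ may be taken to be an arbitrary positive number, so this integrability of $\Sigma$ is exactly what the corollary requires. The last point is the choice of $\varepsilon_{0}\in(0,1)$: a direct rearrangement shows that the constraint $\varepsilon_{0}/(1-\varepsilon_{0})<\varepsilon$ is equivalent both to the positivity of the denominator $(1-\varepsilon_{0})\varepsilon-\varepsilon_{0}$, which is what renders the lower bound $(1-\varepsilon_{0})(1+\varepsilon)/((1-\varepsilon_{0})\varepsilon-\varepsilon_{0})<p=\infty$ meaningful, and to the single inequality $\varepsilon_{0}<\varepsilon/(1+\varepsilon)$. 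Since $\varepsilon>0$, this interval for $\varepsilon_{0}$ is nonempty, so I fix any such $\varepsilon_{0}$ and invoke Corollary \ref{cor201} to conclude that $f$ has a continuous representative.

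I do not expect a genuine obstacle, as the whole argument is the reduction to Corollary \ref{cor201}; the only step demanding attention is verifying that the admissible range of the auxiliary parameter $\varepsilon_{0}$ is nonempty, which reduces to the elementary equivalence recorded above.
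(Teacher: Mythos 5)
Your proposal is correct and matches the paper's own argument: the paper likewise derives Corollary \ref{cor20} from Corollary \ref{cor201} by taking $K(x)\equiv K$ (so that the distortion exponent may be taken as $p=\infty$, the spherical $(\mathcal{K},Q-1)$-condition holds trivially with $\mathcal{K}=K$, and the integrability exponent of $\Sigma$ may then be any $1+\varepsilon>1$). Your additional verification that the admissible range of $\varepsilon_{0}$ is nonempty is a point the paper leaves implicit, but it is elementary and changes nothing in the route.
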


Next, we will end up this section by discussing two problems related to  Corollary \ref{corroo}. Recall a mapping $f\in W_{loc}^{1,Q}(\Omega,\mathbb{G})$ is said to be $K$-quasiconformal for some $K\geq1$ if it is homeomorphic and satisfies inequality (\ref{df2111111}) with $\Sigma\equiv0.$ Pansu \cite{Pan89}  has shown that 1-quasiconformal maps in Carnot groups are locally Lipschitz. Heinonen \cite{Hei95}  established the H\"{o}lder continuity for general $K$-quasiconformal maps in Carnot groups. However, the estimates he obtained is not asymptotically sharp. Namely, it does not suffice to give a H\"{o}lder exponent $\alpha=\alpha(K)$ which tends to one as $K\rightarrow1$. In additional, Balogh, Holopainen  and Tyson \cite{BHT02} showed a $K$-quasiconformal map between open subsets of a Carnot group $G$ is locally $K^{1/(1-Q)}$-H\"{o}lder continuous. If we consider the case that $1\leq K<(1+\varepsilon)/IQ\varepsilon$ in Corollary \ref{corroo}, then we obtain
  \begin{equation}
 \begin{aligned}
d_{c}(f(z),f(y))
\leq C(d_{c}(z,y))^{\frac{1}{2}\frac{\varepsilon}{1+\varepsilon}}\\
\end{aligned}
\end{equation}
for every fixed $x\in\Omega$ and every $y,z\in B(x,r)$ with $r$ is small enough. The H\"{o}lder exponent $\varepsilon/2(1+\varepsilon)\rightarrow1/2\neq1$ as $\varepsilon\rightarrow\infty.$
This leads to the following problem.
 \begin{prob}
 Find the asymptotically sharp H\"{o}lder exponent  in Corollary \ref{corroo}. That is to say: find a H\"{o}lder exponent $\theta(K,\varepsilon),$ such that 
 $$\lim_{K\rightarrow1^{+}, \varepsilon\rightarrow\infty}\theta(K,\varepsilon)=1.$$
 \end{prob}

On the other hand, in the case of Euclidean space, Kangasniemia and Onninen showed in \cite[Theorem 1.1]{KO221} that if a mapping $f\in W_{loc}^{1,n}(\Omega,\mathbb{R}^{n})$ satisfies
 $$\vert Df\vert^{n}\leq K J_{f}+\Sigma^{n}\vert f\vert^{n}$$
  for almost every $x\in\Omega$ and if $\Sigma\in L_{loc}^{1+\varepsilon}(\Omega)$ for some $\varepsilon>0.$ Then it is locally H\"{o}lder continuous. They further proved the following results:
  
   (i) If $\varepsilon/(n+\varepsilon)\neq1/K$, then the following H\"{o}lder exponent  $$\min\{\frac{\varepsilon}{n+\varepsilon},\frac{1}{K}\}$$
   is sharp.

    (ii)  If $\frac{\varepsilon}{n+\varepsilon}=\frac{1}{K}$, then it loaclly satisfies
$$\vert f(y)-f(z)\vert\leq C\vert y-z\vert^{1/K}\left(\log\frac{L}{\vert y-z\vert}\right)^{\frac{1}{n}}$$
and the  exponent  is sharp. 

In the case for quasiregular values on Carnot group of $H$-type, it is natural to ask the following problem.

 \begin{prob}
 Find the sharp H\"{o}lder exponent in Corollary \ref{corroo}.
 \end{prob}

\section{Discrete type result for quasiregular values}\label{sec3333}
In this section, we will study  the discreteness for quasiregular values on Carnot group of $H$-type. We sometimes provide some results on some subclass of generalized finite distortion maps.  The following result was obtained by  Basalaev and  Vodopyanov \cite[Lemma 2.4]{BV23}.
\begin{lem}{\rm\cite[Lemma 2.4]{BV23}}\label{l00}
Suppose that $f\in W^{1,Q}(\Omega)\cap \mathcal{C}(\Omega,\mathbb{G})$ and $\psi\in \mathcal{C}^{1}(\mathbb{R}^{+},\mathbb{R}^{+}),$ where $\psi(t)=o(t^{Q})$ as $t\rightarrow0.$ Then there exists a constant
$C=C(\mathbb{G})>0,$ such that the following inequality 
\begin{equation}\label{i0100}
\abs{\int_{\Omega}\varphi\frac{\psi'(\rho\circ f)\vert\nabla_{h}\rho\circ f\vert^{Q}}{(\rho\circ f)^{Q-1}}J_{f}}\leq C\abs{\int_{\Omega}\vert\nabla_{h}\varphi\vert\frac{\psi(\rho\circ f)\vert\nabla_{h}\rho\circ f\vert^{Q-1}}{(\rho\circ f)^{Q-1}}\vert D_{h}f\vert^{Q-1} }
\end{equation}
holds for every $\varphi\in \mathcal{C}_{0}^{\infty}(\Omega).$
\end{lem}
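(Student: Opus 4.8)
The plan is to read inequality (\ref{i0100}) as the pullback under $f$ of the weak $Q$-harmonicity of the gauge. On a Carnot group of $H$-type the function $\gamma=\log\rho$ satisfies ${\rm div}_{h}(\vert\nabla_{h}\gamma\vert^{Q-2}\nabla_{h}\gamma)=0$ on $\mathbb{G}\setminus\{0\}$, and since $\nabla_{h}\gamma=\rho^{-1}\nabla_{h}\rho$, this is the same as
\[
{\rm div}_{h}\!\left(\frac{\vert\nabla_{h}\rho\vert^{Q-2}\nabla_{h}\rho}{\rho^{Q-1}}\right)=0 \qquad\text{on } \mathbb{G}\setminus\{0\}.
\]
Note that the hypotheses ask nothing of the distortion of $f$: only $f\in W^{1,Q}(\Omega)\cap\mathcal{C}(\Omega,\mathbb{G})$ is used, so (\ref{i0100}) is really a statement about pulling back this one $Q$-harmonic vector field, and the role of $\psi(t)=o(t^{Q})$ will be purely to tame the singularity of $\gamma$ at the identity.

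First I would establish the model identity on the target. Testing the displayed equation against $\psi(\rho)\chi$ with $\chi\in\mathcal{C}_{0}^{\infty}(\mathbb{G}\setminus\{0\})$ and integrating by parts, the product rule $\nabla_{h}(\psi(\rho)\chi)=\psi'(\rho)\chi\,\nabla_{h}\rho+\psi(\rho)\nabla_{h}\chi$ splits the integral into two pieces and yields
\[
\int \chi\,\frac{\psi'(\rho)\vert\nabla_{h}\rho\vert^{Q}}{\rho^{Q-1}}=-\int \frac{\psi(\rho)\vert\nabla_{h}\rho\vert^{Q-2}\langle\nabla_{h}\rho,\nabla_{h}\chi\rangle}{\rho^{Q-1}}.
\]
The left-hand term already has the shape of the integrand in the statement, and a single use of Cauchy--Schwarz bounds the right-hand term by $\int\vert\nabla_{h}\chi\vert\,\psi(\rho)\vert\nabla_{h}\rho\vert^{Q-1}\rho^{1-Q}$. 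The remaining task is to transport this identity from the target to $\Omega$ along $f$. Here I would use the chain rule for contact maps, $\nabla_{h}(\rho\circ f)=(D_{h}f)^{\ast}\big[(\nabla_{h}\rho)\circ f\big]$, together with the change of variables that converts the volume element into $J_{f}\,dx$ and $\chi$ into $\varphi$; this is exactly what inserts the factor $J_{f}$ on the left side of (\ref{i0100}) and produces $\vert D_{h}f\vert^{Q-1}$ on the right, through the comparison $\vert\nabla_{h}(\rho\circ f)\vert\le\vert D_{h}f\vert$ and the operator-norm bounds recorded before Definition \ref{df1}. Collecting the structural constants gives the asserted $C=C(\mathbb{G})$.

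The delicate points, and where the real work lies, are two. Since $f$ is only $W^{1,Q}$ and need be neither smooth nor injective, one cannot literally change variables on the target; the rigorous substitute is the Sobolev pullback of the (weakly) closed $(N-1)$-form $\star A$ associated with $A=\rho^{1-Q}\vert\nabla_{h}\rho\vert^{Q-2}\nabla_{h}\rho$, for which the pullback under a $W^{1,Q}$ contact map is again weakly closed, so that pairing it with $d\big(\varphi\,\psi(\rho\circ f)\big)$ returns zero and the product rule once more separates the $\psi'$-term from the $\nabla_{h}\varphi$-term. The second and main obstacle is the singularity of $\gamma$ at the identity, i.e.\ the behaviour on $f^{-1}\{0\}$ where $\rho\circ f\to 0$; this is precisely what $\psi(t)=o(t^{Q})$ is for. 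I would first prove the identity for smooth maps and for $\psi$ supported away from $0$, where every integral is classical and absolutely convergent; then remove the support restriction by exhausting $\psi$ with cut-offs and invoking $\psi(t)=o(t^{Q})$ to dominate the integrands uniformly near $f^{-1}\{0\}$; and finally pass from smooth maps to the general $f$ by $W^{1,Q}$-approximation, using the continuity of $f$ and dominated convergence to secure the limits. The technical heart is thus the combination of the Sobolev pullback identity with the $o(t^{Q})$-controlled removal of the singular set.
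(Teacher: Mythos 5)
The first thing to note is that the paper does not prove Lemma \ref{l00} at all: it is imported verbatim from Basalaev--Vodopyanov \cite[Lemma 2.4]{BV23}, so there is no in-paper argument to compare against, and your proposal has to be measured against the strategy used in that cited literature. Judged that way, your skeleton is the right one and matches what is known: the lemma is indeed a weak-form pullback of the $Q$-harmonicity of $\gamma=\log\rho$ for Kaplan's gauge on an $H$-type group (due to Heinonen--Holopainen, and note the paper's displayed equation for this has a typo --- the correct exponent is $Q-2$, as you wrote); the left-hand integrand of (\ref{i0100}) is exactly $\varphi\cdot\bigl[\langle\nabla_{h}\psi(\rho),A\rangle\circ f\bigr]J_{f}$ with $A=\rho^{1-Q}\vert\nabla_{h}\rho\vert^{Q-2}\nabla_{h}\rho$; the product rule separates the $\psi'$-term from the $\nabla_{h}\varphi$-term once one knows the pullback of the closed $(N-1)$-form is weakly closed; and $\psi(t)=o(t^{Q})$ together with continuity of $f$ is precisely what neutralizes the singularity of $A$ across $f^{-1}\{0\}$, since $\psi(\rho\circ f)\vert A\circ f\vert=o(\rho\circ f)$ there.

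The genuine gap is your final reduction: ``prove the identity for smooth maps, then pass to general $f$ by $W^{1,Q}$-approximation.'' This step is not available on Carnot groups. Unlike in $\mathbb{R}^{n}$, smooth maps are not dense in Sobolev classes of group-valued maps: mollifying $f$ componentwise destroys the group-valued/contact structure, and the failure of density of Lipschitz or smooth mappings in $W^{1,p}(\Omega;\mathbb{G})$ for Heisenberg-type targets is a well-known obstruction. Since the weak closedness of the Sobolev pullback $f^{\#}(\star A)$ is, by your own account, the technical heart of the argument, and your only proposed route to it is smooth approximation of $f$, the proposal as written reduces the lemma to a step whose proof would fail. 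The actual proof in the Vodopyanov framework establishes this weak closedness directly for $W^{1,Q}\cap\mathcal{C}$ maps --- via a.e.\ Pansu-type differentiability, the distributional Piola identity for the horizontal adjugate, and the associated change-of-variables/degree formula --- rather than by approximating the mapping; that is exactly the nontrivial content packaged in \cite[Lemma 2.4]{BV23} and the foundational results it rests on. (A secondary, smaller looseness: you ``transport'' an inequality obtained by Cauchy--Schwarz on the target; what must be transported is the identity, with all norm estimates, including the cofactor bound by $C\vert D_{h}f\vert^{Q-1}$, performed on the domain side --- but this is cosmetic compared with the approximation issue.)
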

\begin{rem}
In Lemma \ref{l00}, the condition  $\psi(t)=o(t^{Q})$ as $t\rightarrow0$ is to ensure that the integration in (\ref{i0100}) is well-defined.  We can also chose those $\psi$ which is piecewise $C^{1}$-smooth with $\psi'$ locally bounded if the resulting integration in (\ref{i0100}) makes sense.
\end{rem}
By virtue of  Corollary \ref{cor201}  and Lemma \ref{l00}, we have the following result. 
\begin{thm}\label{th11000}
Let $\varepsilon_{0}\in(0,1)$ be a given constant. Suppose that $\mathbb{G}$ is a Carnot group of $H$-type, $\Omega\subset\mathbb{G}$ is a domain and $f:\Omega\rightarrow \mathbb{G}$ is a mapping of class $W_{loc}^{1,Q}(\Omega)$ satisfying  the following inequality
$$\vert D_{h}f(x)\vert^{Q}\leq K(x)J_{f}(x)+\Sigma(x)(\rho\circ f(x))^{Q}$$
for almost every $x\in\Omega.$ Let $K\in L_{loc}^{p}(\Omega)$ and $\Sigma\in L_{loc}^{1+\varepsilon}(\Omega),$ where $(1-\varepsilon_{0})(1+\varepsilon)/((1-\varepsilon_{0})\varepsilon-\varepsilon_{0})<p\leq\infty$ and $\varepsilon_{0}/(1-\varepsilon_{0})<\varepsilon\leq\infty.$  If we further assume that $K$ satisfies $(\mathcal{K},Q-1)$-spherical  condition on $\Omega,$ then $\nabla_{h}(\log^{+}\log^{+}\frac{1}{\rho}\circ f)\in L_{loc}^{q_{1}}(\Omega),$ where $q_{1}=pQ/(1+p)<Q.$ In particular, if $K(x)$ is a constant $K\geq1 ,$ then we have $\nabla_{h}(\log^{+}\log^{+}\frac{1}{\rho}\circ f)\in L_{loc}^{Q}(\Omega).$ 
\end{thm}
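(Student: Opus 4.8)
The plan is to feed a suitable test weight $\psi$ into the integral inequality of Lemma \ref{l00} and then absorb the Jacobian term using the distortion inequality, so that we are left estimating a quantity built from $\nabla_h(\rho\circ f)$ alone. First I would record the consequence of Corollary \ref{cor201}: under the stated hypotheses on $p,\varepsilon,\varepsilon_0$, the map $f$ has a continuous representative, so $\rho\circ f\in W^{1,Q}_{loc}(\Omega)\cap\mathcal C(\Omega)$ and Lemma \ref{l00} is applicable. The natural choice for the auxiliary function is one whose logarithmic derivative produces the iterated logarithm $\log^+\log^+\tfrac1\rho\circ f$; concretely I would take $\psi$ (piecewise $C^1$, as permitted by the Remark after Lemma \ref{l00}) so that the factor $\psi'(\rho\circ f)/(\rho\circ f)^{Q-1}$ on the left of \eqref{i0100} matches $\abs{\nabla_h(\log^+\log^+\tfrac1\rho\circ f)}^{Q}$ up to the Jacobian. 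The point of the double logarithm is that $\psi(t)=o(t^Q)$ as $t\to0$ must hold for the integral to be well-defined, and the iterated logarithm decays slowly enough to be integrable against $J_f$ while still being singular enough to control the gradient.

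Next I would combine the distortion inequality
$$\vert D_{h}f\vert^{Q}\leq K\,J_{f}+\Sigma\,(\rho\circ f)^{Q}$$
with the right-hand side of \eqref{i0100}. The idea is that $\vert\nabla_h\rho\circ f\vert\le\vert D_hf\vert$, so on the left of \eqref{i0100} the gradient of the double logarithm appears weighted by $J_f$, while on the right one has $\vert D_hf\vert^{Q-1}$ together with $\psi(\rho\circ f)$. After choosing $\varphi$ to be a standard cutoff supported in a compact subset $M\Subset\Omega$ with $\vert\nabla_h\varphi\vert$ bounded, I would apply Hölder's inequality on the right-hand side, splitting the exponents so that the $\vert D_hf\vert^{Q-1}$ factor pairs against the $L^Q$-integrability of $D_hf$ (which holds since $f\in W^{1,Q}_{loc}$) and the remaining weights pair against the integrability coming from $K\in L^p_{loc}$. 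Balancing the exponents is exactly what forces $q_1=pQ/(1+p)$: one has $1/q_1=\tfrac{1}{Q}+\tfrac{1}{pQ'}$-type bookkeeping, where the $1/(pQ')$ accounts for the distortion weight $K$ through Hölder, and since $p<\infty$ one checks $q_1<Q$. The contribution of the $\Sigma(\rho\circ f)^Q$ term is handled separately using $\Sigma\in L^{1+\varepsilon}_{loc}$ together with the higher integrability $\rho\circ f\in L^{p_1}_{loc}$ for all $p_1\ge1$ established inside the proof of Corollary \ref{cor201}.

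The main obstacle I anticipate is the two-sided nature of the estimate: inequality \eqref{i0100} bounds the \emph{integral against $J_f$} of $\abs{\nabla_h(\log^+\log^+\tfrac1\rho\circ f)}^Q$, not the bare $L^{q_1}$-norm of the gradient, and because the theorem explicitly allows $J_f$ to change sign one cannot simply read off a gradient bound from a Jacobian-weighted quantity. The resolution is to use the distortion inequality in the form $\vert\nabla_h\rho\circ f\vert^Q\le K J_f+\Sigma(\rho\circ f)^Q$ to convert the $J_f$-weight into the unweighted $\vert\nabla_h\rho\circ f\vert^Q$, at the cost of the factor $K$, and then invoke the $(\mathcal K,Q-1)$-spherical (equivalently mean-integrability) control on $K$ to keep the resulting integral finite after Hölder. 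The special case $K\equiv$ const $\ge1$ is cleaner: there is no $K$-weight to absorb, so the Hölder split is trivial and one recovers the full exponent $q_1=Q$, which is precisely the last assertion of the theorem. I would finish by noting that the bound is uniform over $M\Subset\Omega$, giving the claimed $L^{q_1}_{loc}$ membership.
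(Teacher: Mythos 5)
Your proposal follows the same skeleton as the paper's proof (Lemma \ref{l00} with a test function adapted to the iterated logarithm, the distortion inequality to trade the $J_{f}$-weight for $\vert D_{h}f\vert^{Q}/K$, the $(\mathcal{K},Q-1)$-condition, and a final H\"older against $K\in L_{loc}^{p}$ to produce $q_{1}=pQ/(1+p)$), but two steps at the heart of the argument are missing or would fail. The first is your H\"older split of the right-hand side of (\ref{i0100}). If you pair $\vert D_{h}f\vert^{Q-1}$ against the $L^{Q}$-norm of $D_{h}f$, the complementary factor is $\left[\psi(\rho\circ f)\vert\nabla_{h}\rho\circ f\vert^{Q-1}(\rho\circ f)^{1-Q}\right]^{Q}\vert\nabla_{h}\varphi\vert^{Q}$, which with the natural choice $\psi(t)\approx\log^{1-Q}(1/t)$ carries a singularity of order $(\rho\circ f)^{-Q(Q-1)}$; nothing in the hypotheses controls this, so the estimate dies there. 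The split that works reconstitutes the unknown quantity itself: writing $X=\int\eta^{Q}\vert\nabla_{h}\rho\circ f\vert^{Q}\vert D_{h}f\vert^{Q}K^{-1}(\rho\circ f)^{-Q}\log^{-Q}(1/\rho\circ f),$ one factors the integrand of the right-hand side as the $X$-integrand to the power $(Q-1)/Q$ times $\left[K^{Q-1}\vert\nabla_{h}\eta\vert^{Q}\right]^{1/Q}$, so that Lemma \ref{l00} combined with $J_{f}\geq(\vert D_{h}f\vert^{Q}-\Sigma(\rho\circ f)^{Q})/K$ yields the self-improving bound $X\leq C_{1}X^{(Q-1)/Q}+C_{2}$; the factor $\left[\int K^{Q-1}\vert\nabla_{h}\eta\vert^{Q}\right]^{1/Q}$ is exactly what the $(\mathcal{K},Q-1)$-condition bounds.

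The second, more serious gap is that the absorption $X\leq C_{1}X^{(Q-1)/Q}+C_{2}\Rightarrow X\leq C$ is legitimate only if $X<\infty$ a priori, and that finiteness is essentially the conclusion being proved, not a hypothesis. Your plan to invoke the Remark after Lemma \ref{l00} for a non-regularized $\psi$ is circular for the same reason: its proviso is that the resulting integration ``makes sense.'' The paper's fix is the regularization $\psi_{\varepsilon}(t)=\int_{0}^{t}\varphi_{\varepsilon}(s)s^{-1}\log^{-Q}(1/s)\,ds$ with $\varphi_{\varepsilon}(s)=(1+\varepsilon 2^{1/s})^{-1}$: this makes $\psi_{\varepsilon}(t)=o(t^{Q})$ so Lemma \ref{l00} applies as stated, makes the regularized quantity $X_{\varepsilon}$ finite a priori, gives $X_{\varepsilon}\leq C$ uniformly in $\varepsilon$ by absorption, and then monotone convergence as $\varepsilon\to0$ yields (\ref{0000018}); without some such device the argument does not close. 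Two smaller points: the $\Sigma$-term is finite not because of higher integrability of $\rho\circ f$, but because the factor $(\rho\circ f)^{Q}$ it carries cancels the singularity and because $\vert\nabla_{h}\rho\vert\leq1$ on an $H$-type group (Kaplan's identity $\vert\nabla_{h}\rho\vert=\vert a\vert/\rho$, Step 1 of the paper's proof, which your write-up never establishes); and your exponent bookkeeping $1/q_{1}=1/Q+1/(pQ')$ is incorrect --- the relation forced by H\"older against $K^{p}$ is $q_{1}/(Q-q_{1})=p$, i.e.\ $1/q_{1}=1/Q+1/(pQ)$.
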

\begin{proof}
Without loss of generality, we may assume that $\Omega$ is a bounded domain, $f\in W^{1,Q}(\Omega),$ $K\in L^{p}(\Omega)$ and $\Sigma\in L^{1+\varepsilon}(\Omega).$
By Corollary \ref{cor201}, we see that $f\in \mathcal{C}(\Omega).$ Hence, it allows us to use  Lemma \ref{l00}. Since $\rho\circ f$ is continuous,  the set $$\Omega_{1}:=\{x\in\mathbb{G}:\rho\circ f(x)<1/e\}$$ is an open domain and $\log^{+}\log^{+}\frac{1}{\rho}\circ f(x)=0$ outside of $\Omega_{1}.$ So we may, without loss of generality, consider that $\Omega_{1}=\Omega.$ We fix $\varepsilon>0$ and consider the following mapping 
$$\psi_{\varepsilon}(t)=\int_{0}^{t}\frac{\varphi_{\varepsilon}(s)}{s\log^{Q}(\frac{1}{s})}ds,$$
where $$\varphi_{\varepsilon}(s)=\frac{1}{1+\varepsilon2^{\frac{1}{s}}}.$$
For every fixed $x_{0}\in \Omega,$ we assume that $0<r<2r<{\rm dist}(x_{0},\partial \Omega).$ Now, we are taking mappings $\psi=\psi_{\varepsilon}$ and $\varphi=\eta^{Q},$ where $\eta\in \mathcal{C}_{0}^{\infty}(\Omega)$ satisfying $\eta=1$ on $B(x_{0},r),$ $\vert\nabla_{h}\eta\vert\leq1/r$ on $B(x_{0},2r)$ and $\eta=0$ on $\Omega\setminus B(x_{0},2r).$
Then, by the following inequality 
$$\psi_{\varepsilon}(t)\leq\frac{\varphi_{\varepsilon}(t)^{\frac{Q-1}{Q}}}{(Q-1)\log^{Q-1}(\frac{1}{t})}$$
and Lemma \ref{l00}, we obtain 
\begin{equation}\label{09}
\begin{aligned}
&\int_{\Omega}\eta^{Q}\frac{\vert\nabla_{h}\rho\circ f\vert^{Q}J_{f}}{(\rho\circ f)^{Q}\log^{Q}(\frac{1}{\rho\circ f})}\varphi_{\varepsilon}(\rho\circ f)\\
\leq& C_{1} \int_{\Omega}\eta^{Q-1}\vert\nabla_{h}\eta\vert\frac{\vert\nabla_{h}\rho\circ f\vert^{Q-1}\vert D_{h}f\vert^{Q-1}}{(\rho\circ f)^{Q-1}}\psi_{\varepsilon}(\rho\circ f) \\
\leq& C_{2} \int_{\Omega}\eta^{Q-1}\vert\nabla_{h}\eta\vert\frac{\vert\nabla_{h}\rho\circ f\vert^{Q-1}\vert D_{h}f\vert^{Q-1}}{(\rho\circ f)^{Q-1}\log^{Q-1}(\frac{1}{\rho\circ f})}\left(\varphi_{\varepsilon}(\rho\circ f)\right)^{\frac{Q-1}{Q}}\\
\leq& C_{2} \left[\int_{\Omega}\eta^{Q}\frac{\vert\nabla_{h}\rho\circ f\vert^{Q}\vert D_{h}f\vert^{Q}}{K(\rho\circ f)^{Q}\log^{Q}(\frac{1}{\rho\circ f})}\varphi_{\varepsilon}(\rho\circ f)\right]^{\frac{Q-1}{Q}}\cdot\left[\int_{\Omega}K^{Q-1}\vert\nabla_{h}\eta\vert^{Q}\right]^{\frac{1}{Q}}.\\
\end{aligned}
\end{equation}
Hence,  by combining $$\frac{\vert D_{h}f\vert^{Q}}{K}-\frac{\Sigma(\rho\circ f)^{Q}}{K}\leq J_{f}$$ and inequality (\ref{09}), we get that 
\begin{equation}\label{07}
\begin{aligned}
&\int_{\Omega}\eta^{Q}\frac{\vert\nabla_{h}\rho\circ f\vert^{Q}\vert D_{h}f\vert^{Q}}{K(\rho\circ f)^{Q}\log^{Q}(\frac{1}{\rho\circ f})}\varphi_{\varepsilon}(\rho\circ f)\\
\leq& C_{2} \left[\int_{\Omega}\eta^{Q}\frac{\vert\nabla_{h}\rho\circ f\vert^{Q}\vert D_{h}f\vert^{Q}}{K(\rho\circ f)^{Q}\log^{Q}(\frac{1}{\rho\circ f})}\varphi_{\varepsilon}(\rho\circ f)dx\right]^{\frac{Q-1}{Q}}\cdot\left[\int_{\Omega}K^{Q-1}\vert\nabla_{h}\eta\vert^{Q}\right]^{\frac{1}{Q}}\\
&\;\;\;\;\;\;\;\;\;\;\;\;\;\;\;\;\;\;\;\;\;\;\;\;\;\;\;\;\;\;\;\;\;\;+\int_{\Omega}\eta^{Q}\frac{\vert\nabla_{h}\rho\circ f\vert^{Q}}{\log^{Q}(\frac{1}{\rho\circ f})}\varphi_{\varepsilon}(\rho\circ f)\frac{\Sigma}{K}\\
\leq& C_{3} \left[\int_{\Omega}\eta^{Q}\frac{\vert\nabla_{h}\rho\circ f\vert^{Q}\vert D_{h}f\vert^{Q}}{K(\rho\circ f)^{Q}\log^{Q}(\frac{1}{\rho\circ f})}\varphi_{\varepsilon}(\rho\circ f)\right]^{\frac{Q-1}{Q}}\cdot\left[\frac{\int_{B(x_{0},2r)}K^{Q-1}}{(2r)^{Q}}\right]^{\frac{1}{Q}}\\
&\;\;\;\;\;\;\;\;\;\;\;\;\;\;\;\;\;\;\;\;\;\;\;\;\;\;\;\;\;\;\;\;\;\;+\int_{\Omega}\eta^{Q}\frac{\vert\nabla_{h}\rho\circ f\vert^{Q}}{\log^{Q}(\frac{1}{\rho\circ f})}\varphi_{\varepsilon}(\rho\circ f)\frac{\Sigma}{K}\\
\leq& C_{4} \left[\int_{\Omega}\eta^{Q}\frac{\vert\nabla_{h}\rho\circ f\vert^{Q}\vert Df\vert^{Q}}{K(\rho\circ f)^{Q}\log^{Q}(\frac{1}{\rho\circ f})}\varphi_{\varepsilon}(\rho\circ f)\right]^{\frac{Q-1}{Q}}\\
&\;\;\;\;\;\;\;\;\;\;\;\;\;\;\;\;\;\;\;\;\;\;\;\;\;\;\;\;\;\;\;\;\;\;+\int_{\Omega}\eta^{Q}\frac{\vert\nabla_{h}\rho\circ f\vert^{Q}}{\log^{Q}(\frac{1}{\rho\circ f})}\varphi_{\varepsilon}(\rho\circ f)\frac{\Sigma}{K}.\\
\end{aligned}
\end{equation}

Next, we will finish the proof of Theorem \ref{th11000} in the following two steps.

${\rm \mathbf{Step}\;\mathbf{1}}$: To prove that  $\vert \nabla_{h}\rho\circ f\vert\leq 1$ and hence derive the finiteness of the integration
\begin{equation}\label{000009}
\int_{\Omega}\eta^{Q}\frac{\vert\nabla_{h}\rho\circ f\vert^{Q}}{\log^{Q}(\frac{1}{\rho\circ f})}\frac{\Sigma}{K}.
\end{equation}

Actually,  this can be seen by letting $v^{1/4}=\rho=\left(\vert a\vert^{4}+16\vert b\vert^{2}\right)^{1/4}$ and $\Phi_{j}(t)=v(x\exp tX_{j}),j=1,\cdots,n.$ Then $X_{j}v=\Phi'_{j}(0).$ Hence, by \cite[(15)]{Kap80}, we get
$$16\rho^{6}\sum_{j=1}^{n}(X_{j}\rho)^{2}=\sum_{j=1}^{n}(X_{j}v)^{2}=16v\vert a\vert^{2}\leq16 v^{3/2}=16\rho^{6}.$$
Hence, we get $\vert \nabla_{h}\rho\circ f\vert\leq 1.$ This result also can be derived directly by the following formula \cite[Proposition 3.2.16]{Ing10}
\begin{equation}\label{0707}
\vert\nabla_{h}\rho(x)\vert=\frac{\vert a(x)\vert}{\rho(x)},
\end{equation}
where $x=\exp\left(a(x)+b(x)\right).$   Since $\rho\circ f<1/e,$ we see that
\begin{equation}\label{05}
\begin{aligned}
&\int_{\Omega}\eta^{Q}(x)\frac{\vert\nabla_{h}\rho\circ f(x)\vert^{Q}}{\log^{Q}(\frac{1}{\rho\circ f(x)})}\frac{\Sigma}{K}dx\\
\leq& C_{4}\int_{\Omega}\frac{\Sigma}{K}dx\leq\left(\int_{\Omega}\left(\frac{\Sigma}{K}\right)^{q}dx\right)^{\frac{1}{q}}\left(\int_{\Omega}1^{\frac{q}{q-1}}dx\right)^{\frac{q-1}{q}}<\infty. \\
\end{aligned}
\end{equation}
This finishes the finiteness of integration (\ref{000009}).

${\rm \mathbf{Step}\;\mathbf{2}}$:  To verify the following inequality
\begin{equation}\label{0000018}
\int_{\Omega}\eta^{Q}\frac{\vert\nabla_{h}\rho\circ f(x)\vert^{Q}\vert D_{h}f(x)\vert^{Q}}{K(x)(\rho\circ f(x))^{Q}\log^{Q}(\frac{1}{\rho\circ f(x)})}dx\leq C,
\end{equation}
where $C$ is a constant depends only on $Q,\mathcal{K},\|K\|_{L^{p}},\|\Sigma/K\|_{L^{q}}$ and $\Omega.$ To show this conclusion, we combine inequalities (\ref{07}) and (\ref{05}) and obtain
\begin{equation}\label{06}
\begin{aligned}
&\int_{\Omega}\eta^{Q}(x)\frac{\vert\nabla_{h}\rho\circ f(x)\vert^{Q}\vert D_{h}f(x)\vert^{Q}}{K(\rho\circ f(x))^{Q}\log^{Q}(\frac{1}{\rho\circ f(x)})}\varphi_{\varepsilon}(\rho\circ f)dx\\
\leq& C_{3} \left[\int_{\Omega}\eta^{Q}(x)\frac{\vert\nabla_{h}\rho\circ f(x)\vert^{Q}\vert D_{h}f(x)\vert^{Q}}{K(\rho\circ f(x))^{Q}\log^{Q}(\frac{1}{\rho\circ f(x)})}\varphi_{\varepsilon}(\rho\circ f(x))dx\right]^{\frac{Q-1}{Q}}+C_{5}.\\
\end{aligned}
\end{equation}
Now, if there exists constant $C_{6},$ such that
$$\int_{\Omega}\eta^{Q}(x)\frac{\vert\nabla_{h}\rho\circ f(x)\vert^{Q}\vert D_{h}f(x)\vert^{Q}}{K(\rho\circ f(x))^{Q}\log^{Q}(\frac{1}{\rho\circ f(x)})}\varphi_{\varepsilon}(\rho\circ f(x))dx\geq C_{6}.$$
Then, inequality (\ref{06}) gives that 
$$
\left(\int_{\Omega}\eta^{Q}\frac{\vert\nabla_{h}\rho\circ f\vert^{Q}\vert D_{h}f\vert^{Q}}{K(\rho\circ f)^{Q}\log^{Q}(\frac{1}{\rho\circ f})}\varphi_{\varepsilon}(\rho\circ f)\right)^{\frac{1}{Q}}\leq
C_{3}+C_{5}/(C_{6})^{(Q-1)/Q},
$$
which ensures the  finiteness of the integration (\ref{0000018}) according to the monotone convergence Theorem.

Now, we can give the proof of  $\nabla_{h}(\log^{+}\log^{+}\frac{1}{\rho}\circ f)\in L_{loc}^{q_{1}}(\Omega).$ To do this,we take every $x_{0}\in\Omega,$ $0<r<{\rm dist}(x_{0},\partial\Omega)$ and $q_{1}=pQ/(1+p).$  Then, by using of H\"{o}lder inequality and
inequality (\ref{0000018}), we get
\begin{equation}\label{04}
\begin{aligned}
&\int_{B(x_{0},r)}\abs{\nabla_{h}(\log^{+}\log^{+}\frac{1}{\rho}\circ f(x))}^{q_{1}}dx\\
\leq&\int_{B(x_{0},r)}\frac{\vert\nabla_{h}\rho\circ f(x)\vert^{q_{1}}\vert D_{h}f(x)\vert^{q_{1}}}{(\rho\circ f(x))^{q_{1}}\log^{q_{1}}(\frac{1}{\rho\circ f(x)})}dx\\
=&\int_{B(x_{0},r)}\left[\frac{\vert\nabla_{h}\rho\circ f(x)\vert^{Q}\vert D_{h}f(x)\vert^{Q}}{K(\rho\circ f(x))^{Q}\log^{Q}(\frac{1}{\rho\circ f(x)})}\right]^{\frac{q_{1}}{Q}}\cdot K^{\frac{q_{1}}{Q}}(x)dx\\
\leq& \left[\int_{B(x_{0},r)}\frac{\vert\nabla_{h}\rho\circ f(x)\vert^{Q}\vert D_{h}f(x)\vert^{Q}}{K(\rho\circ f(x))^{Q}\log^{Q}(\frac{1}{\rho\circ f(x)})}dx\right]^{\frac{q_{1}}{Q}}\cdot \left[\int_{B(x_{0},r)}K^{p}(x)dx\right]^{\frac{1}{p}}\\
\leq& \left[\int_{\Omega}\eta^{Q}\frac{\vert\nabla_{h}\rho\circ f(x)\vert^{Q}\vert D_{h}f(x)\vert^{Q}}{K(\rho\circ f(x))^{Q}\log^{Q}(\frac{1}{\rho\circ f(x)})}dx\right]^{\frac{q_{1}}{Q}}\cdot \left[\int_{\Omega}K^{p}(x)dx\right]^{\frac{1}{p}}<\infty.\\
\end{aligned}
\end{equation}
This shows that $\nabla_{h}(\log^{+}\log^{+}\frac{1}{\rho}\circ f)\in L_{loc}^{q_{1}}(\Omega).$ In addition, the conclusion of $\nabla_{h}(\log^{+}\log^{+}\frac{1}{\rho}\circ f)\in L_{loc}^{Q}(\Omega)$ can be derived directly by inequality (\ref{0000018}). This finishes the proof of Theorem \ref{th11000}.
\end{proof}
By above result, we have the following lemma.
\begin{lem}\label{101209}
Let $\varepsilon_{0}\in(0,1)$ be a given constant. Suppose that $\mathbb{G}$ is a Carnot group of $H$-type,  $y_{0}\in\mathbb{G},$ $\Omega\subset\mathbb{G}$ is a domain and $f:\Omega\rightarrow \mathbb{G}$ is a mapping of class $W_{loc}^{1,Q}(\Omega)$ satisfying  the following inequality
$$\vert D_{h}f(x)\vert^{Q}\leq K(x)J_{f}(x)+\Sigma(x)(\rho\circ l_{y_{0}^{-1}}\circ f(x))^{Q}$$
for almost every $x\in\Omega.$ Let $K\in L_{loc}^{p}(\Omega)$ and $\Sigma\in L_{loc}^{1+\varepsilon}(\Omega),$ where $(1-\varepsilon_{0})(1+\varepsilon)/((1-\varepsilon_{0})\varepsilon-\varepsilon_{0})<p\leq\infty$ and $\varepsilon_{0}/(1-\varepsilon_{0})<\varepsilon\leq\infty.$  If we further assume that $K$  satisfies $(\mathcal{K},Q-1)$-spherical  condition on $\Omega,$ then either $f\equiv y_{0},$ or $\mathcal{H}^{1}(f^{-1}\{y_{0}\})=0.$  In the latter case, $f^{-1}\{y_{0}\}$ is totally disconnected.
\end{lem}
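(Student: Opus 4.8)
The plan is to exploit Theorem \ref{th11000}, which manufactures a Sobolev function that blows up \emph{exactly} on the fibre $f^{-1}\{y_{0}\}$, and then to convert that Sobolev regularity into a bound on the size of the fibre. First I would remove $y_{0}$ by left translation: since each $l_{y_{0}^{-1}}$ is an isometry of $(\mathbb{G},d_{c})$ and a group automorphism compatible with the dilations, the auxiliary map $g:=l_{y_{0}^{-1}}\circ f$ satisfies $\vert D_{h}g\vert^{Q}\leq K J_{g}+\Sigma(\rho\circ g)^{Q}$ with the same $K,\Sigma$, and $g^{-1}\{0\}=f^{-1}\{y_{0}\}$. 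By Corollary \ref{cor201} the map $f$ (hence $g$) has a continuous representative, so $E:=f^{-1}\{y_{0}\}$ is a closed subset of $\Omega$.

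Next I would dispose of the trivial alternative. If $g\equiv 0$ on $\Omega$ then $f\equiv y_{0}$ and we are in the first case, so assume $g\not\equiv 0$. Then $\rho\circ g$ is continuous and strictly positive on the nonempty open set $\Omega\setminus E$, and
\[
u:=\log^{+}\log^{+}\frac{1}{\rho\circ g}
\]
is a genuine (finite almost everywhere) function which, by Theorem \ref{th11000}, lies in $W^{1,q_{1}}_{loc}(\Omega)$ with $q_{1}=pQ/(1+p)$, and in $W^{1,Q}_{loc}(\Omega)$ when $K$ is a constant. By construction $u\equiv+\infty$ precisely on $E$, so $E$ is contained in the infinity set of a Sobolev function.

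The decisive step is to turn this into a size bound on $E$. The infinity set of a function in $W^{1,q_{1}}_{loc}$ has vanishing $q_{1}$-capacity, and since $(\mathbb{G},d_{c})$ is Ahlfors $Q$-regular (the paper normalises $m(B(x,r))=r^{Q}$), vanishing $q_{1}$-capacity forces $\dim_{\mathcal H}E\le Q-q_{1}=Q/(1+p)$; in particular $\mathcal{H}^{s}(E)=0$ for every $s>Q/(1+p)$. I expect this capacity-to-Hausdorff-measure passage -- together with verifying that the horizontal Sobolev capacity built from $\nabla_{h}$ genuinely controls $d_{c}$-Hausdorff measure -- to be the \emph{main obstacle}, since it is exactly what replaces the Euclidean potential theory used in \cite{KO222}. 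In the regime of the application (Theorem \ref{00990099199}, where $K$ is constant) one has $q_{1}=Q$, so $\dim_{\mathcal H}E\le 0$ and $\mathcal{H}^{1}(E)=0$ is immediate; in the general case the conclusion $\mathcal{H}^{1}(E)=0$ rests on the exponent relation $Q/(1+p)<1$, equivalently $p>Q-1$, and I would run the argument in the range where Theorem \ref{th11000} supplies a Sobolev exponent exceeding $Q-1$.

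Finally, total disconnectedness is a soft consequence of $\mathcal{H}^{1}(E)=0$. If a connected set $C\subset(\mathbb{G},d_{c})$ contained two distinct points $a,b$, then the $1$-Lipschitz map $x\mapsto d_{c}(a,x)$ would send $C$ onto an interval of length at least $d_{c}(a,b)$, whence $\mathcal{H}^{1}(C)\ge d_{c}(a,b)>0$. Therefore $\mathcal{H}^{1}(E)=0$ forces every connected component of $E$ to be a single point, so $E=f^{-1}\{y_{0}\}$ is totally disconnected, completing the dichotomy.
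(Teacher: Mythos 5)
Your proposal runs along essentially the same lines as the paper's proof---continuity via Corollary \ref{cor201}, the double logarithm $u=\log^{+}\log^{+}\frac{1}{\rho}\circ l_{y_{0}^{-1}}\circ f$ from Theorem \ref{th11000}, vanishing of a $q_{1}$-capacity, a capacity-to-Hausdorff estimate, and the $1$-Lipschitz distance-function argument for total disconnectedness (the paper leaves this last point implicit)---but two of your steps have real gaps. The first: Theorem \ref{th11000} asserts only $\nabla_{h}u\in L_{loc}^{q_{1}}(\Omega)$, not $u\in W_{loc}^{1,q_{1}}(\Omega)$, and in particular not that $u$ is finite almost everywhere. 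Your claim that once $f\not\equiv y_{0}$ the function $u$ is ``a genuine (finite a.e.) function'' presupposes $m(f^{-1}\{y_{0}\})=0$, which a priori may fail for a nonconstant Sobolev map and is in substance part of the conclusion being proved. The bulk of the paper's proof fills exactly this hole: one truncates $u_{k}=\min\{u,k\}$, uses the Poincar\'{e} inequality (Lemma \ref{-11}) on a ball where $u$ is not identically infinite to bound $\int_{B}u_{k}\,dx$ uniformly in $k$, and obtains $u\in L^{1}(B)$ and then $u\in L^{q_{1}}(B)$ by monotone convergence; this is also where the alternative $f\equiv y_{0}$ is isolated (and propagated to all of $\Omega$ by continuity). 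The capacity step is then made concrete with the test functions $w_{k}=\eta u_{k}/k$, which are admissible for the condenser $(f^{-1}\{y_{0}\}\cap\overline{B},3B)$ and whose $W^{1,q_{1}}$-norms tend to $0$, giving ${\rm Cap}_{q_{1}}(f^{-1}\{y_{0}\}\cap\overline{B},3B)=0$. You need this (standard, but not automatic) work inserted.

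The second gap is the one you flagged yourself: your capacity-to-Hausdorff passage is carried out only when $q_{1}=pQ/(1+p)>Q-1$, i.e.\ $p>Q-1$ (or $K$ constant, where $q_{1}=Q$). The hypothesis of the lemma allows $p$ arbitrarily close to $1$ (take $\varepsilon$ large), and in that range your estimate yields only $\mathcal{H}^{s}(f^{-1}\{y_{0}\})=0$ for $s>Q/(1+p)$, which can exceed $1$; so as written you have not proved the lemma in its stated generality. The paper instead deduces $\mathcal{H}^{1}(f^{-1}\{y_{0}\}\cap\overline{B})=0$ directly from the vanishing of the $q_{1}$-capacity by citing \cite[Theorem 9]{VK09}, with no restriction on $p$; the exponent threshold you identified is precisely what that citation is being asked to absorb, so your caution points at the crux of the general-$K$ statement rather than at a defect peculiar to your own route. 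In the constant-$K$ regime that is actually used for Theorem \ref{00990099199}, both your argument and the paper's are complete.
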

\begin{proof}
Without loss of generality, we here only consider the case for $y_{0}=0.$ We may chose a neighborhood of $y_{0}=0$  as $V:=\{y\in\mathbb{G}:d_{\rho}(y,0)<1/e\},$ such that for every ball $B$ compactly contained in $f^{-1}V,$  we have either $f\equiv 0$ on $B$ or $\log\log\frac{1}{\rho}\circ f\in W^{1,q_{1}}(B),$ and they cannot occur simultaneously. Here  $q_{1}=pQ/(1+p)<Q.$ Now, if $f\equiv 0$ on $B,$ then the continuity, which  established in Theorem \ref{thm20},
implies that $B=\Omega.$ Hence, we next prove the later case.

 Let $u_{k}:=\min\{\log\log\frac{1}{\rho}\circ  f,k\}$ for every positive integer $k.$ Since $\rho\circ f<1/e,$ then $u_{k}\geq0$ and $u_{k}\in W^{1,Q}(B).$  Let $g(x):=\log\log\frac{1}{\rho}\circ  f(x), x\in B.$ We assume that $g$ is not identically $\infty$ on $B.$ Then, there exists $0<t_{0}<1,$ such that the set $A:=\{x\in B:t_{0}<g(x)<1/t_{0}\}$ has positive  measure. With this condition, we have $g\in L^{1}(B).$ This can be seem by using of  Poincar\'{e} inequality in Lemma \ref{-11} and Theorem \ref{th11000}. We give the details of proof here. Firstly, by H\"{o}lder inequality, Poincar\'{e} inequality  (\ref{0311888888000}) and Theorem \ref{th11000}, we see that 
 \begin{equation}
\begin{aligned}
&\int_{B}\vert u_{k}-(u_{k})_{B}\vert dx\\
\leq& C(B,q_{1})\left(\int_{B}\vert u_{k}-(u_{k})_{B}\vert^{q_{1}} dx\right)^{\frac{1}{q_{1}}}\leq C(B,q_{1})\left(\int_{B}\vert \nabla_{h}u_{k}\vert^{q_{1}} dx\right)^{\frac{1}{q_{1}}}\\
\leq&C(B,q_{1},\mathcal{K})\left(\int_{B}\frac{\vert\nabla_{h}\rho\circ f(x)\vert^{Q}\vert D_{h}f(x)\vert^{Q}}{K(x)(\rho\circ f(x))^{Q}\log^{Q}(\frac{1}{\rho\circ f(x)})}dx\right)^{\frac{1}{Q}}\leq C,\\
\end{aligned}
\end{equation}
 where $C$ does not depend on $k.$ In addition, 
 \begin{equation}
\begin{aligned}
&\int_{B}\vert u_{k}-(u_{k})_{B}\vert dx\geq\int_{A}((u_{k})_{B}-u_{k} )dx\\
\geq&\int_{A}((u_{k})_{B}-g )dx\geq\int_{A}((u_{k})_{B}-1/t_{0} )dx.
\end{aligned}
\end{equation}
  Hence,
  $$\int_{B}u_{k}dx\leq C(m(A),B,t_{0},q_{1}, Q)<\infty.$$
 By monotone convergence Theorem, we get that $g\in L^{1}(B).$ Next, we will show that $g\in  L^{q_{1}}(B).$ By Poincar\'{e} inequality   (\ref{0311888888000})  and Theorem \ref{th11000}, we get 
\begin{equation}
\begin{aligned}
&\int_{B}\vert u_{k}\vert^{q_{1}}dx\\
\leq& 2^{q_{1}-1}\int_{B}\vert u_{k}-(u_{k})_{B}\vert^{q_{1}}dx+2^{q_{1}-1}\int_{B}\vert(u_{k})_{B}\vert^{q_{1}}dx\\
\leq& C_{1}(B,q_{1})\int_{B}\vert \nabla_{h}u_{k}\vert^{q_{1}}dx+C_{2}(B,q_{1})\left(\int_{B} g dx\right)^{q_{1}}\\
\leq& C(B,q_{1},\|K\|_{L^{p}(\overline{B})})\left(\int_{B}\frac{\vert\nabla_{h}\rho\circ f(x)\vert^{Q}\vert D_{h}f(x)\vert^{Q}}{K(x)(\rho\circ f(x))^{Q}\log^{Q}(\frac{1}{\rho\circ f(x)})}dx\right)^{\frac{p}{1+p}}\\
\,\,\,\,\,\,\,\,\,\,\,\,\,\,\,\,\,\,\,\,\,\,\,\,+&C_{2}(B,q_{1})\left(\int_{B} g dx\right)^{q_{1}}\leq C,\\
\end{aligned}
\end{equation}
where $C$ does not depend  on $k.$ Hence, by  monotone convergence Theorem again, we get that $g\in L^{q_{1}}(B).$ In addition, Theorem \ref{th11000} and monotone convergence Theorem  make sure that 
$\nabla_{h}g\in L^{q_{1}}(B).$ Therefore, we have finished the proof of $\log\log\frac{1}{\rho}\circ f\in W^{1,q_{1}}(B),$ where $q_{1}=pQ/(1+p).$

Let $x\in f^{-1}\{0\}$ and $B=B(x,r),$ such that  $\overline{3B}=\overline{B(x,3r)}\subset f^{-1}V.$ Taking $\eta\in \mathcal{C}_{0}^{\infty}(3B),$ such that $0\leq\eta\leq1$ and $\eta=1$ on $2B.$
Define $w_{k}:=\eta u_{k}/k,$ where $u_{k}=\min\{\log\log\frac{1}{\rho}\circ f,k\}.$ Then, $w_{k}\in W_{0}^{1,q_{1}}(3B)$ and $w_{k}\equiv1$ in a neighborhood of $f^{-1}\{0\}\cap \overline{B}$ for every positive integer $k.$ Therefore, $w_{k}$ are admissible for the $q_{1}$-capacity of the condenser
$(f^{-1}\{0\}\cap \overline{B},3B).$ In addition, since
$$\int_{B}\vert w_{k}(x)\vert^{q_{1}}dx\preceq \frac{\int_{B}\vert\log\log\frac{1}{\rho}\circ f(x)\vert^{q_{1}}dx}{k}\preceq\frac{C_{1}}{k}\rightarrow0\;\;{\rm as}\;k\rightarrow\infty$$
and 
$$\int_{B}\vert \nabla_{h}w_{k}(x)\vert^{q_{1}}dx\preceq \frac{\left(\int_{B}\frac{\vert\nabla_{h}\rho\circ f(x)\vert^{Q}\vert D_{h}f(x)\vert^{Q}}{K(x)(\rho\circ f(x))^{Q}\log^{Q}(\frac{1}{\rho\circ f(x)})}dx\right)^{\frac{p}{1+p}}}{k}\preceq \frac{C}{k}\rightarrow0\;\;{\rm as}\;k\rightarrow\infty,$$
we see that ${\rm Cap}_{q_{1}}(f^{-1}\{0\}\cap \overline{B},3B)=0.$ Hence, by \cite[Theorem 9]{VK09}, we get $\mathcal{H}^{1}(f^{-1}\{0\}\cap \overline{B})=0.$ The claim $\mathcal{H}^{1}(f^{-1}\{0\})=0$
 follows by considering a countable cover of $f^{-1}\{0\}$ by such $B.$ This finishes the  proof of Lemma \ref{101209}.
\end{proof}

Thanks to the totally disconnected which have been established in Lemma \ref{101209}. The Lemma 3.1 in  \cite{KO221} also works on the setting of Carnot group of $H$-type. The readers can directly prove the following Lemma \ref{9981} by following the procedure of \cite[Lemma 3.1]{KO221}. Hence, we omit it here.
\begin{lem}\label{9981}
Let $\varepsilon_{0}\in(0,1)$ be a given constant. Suppose that $\mathbb{G}$ is a Carnot group of $H$-type,  $y_{0}\in\mathbb{G},$ $\Omega\subset\mathbb{G}$ is a domain and $f:\Omega\rightarrow \mathbb{G}$ is a mapping of class $W_{loc}^{1,Q}(\Omega)$ satisfying  the following inequality
$$\vert D_{h}f(x)\vert^{Q}\leq K(x)J_{f}(x)+\Sigma(x)(\rho\circ l_{y_{0}^{-1}}\circ f(x))^{Q}$$
for almost every $x\in\Omega.$ Let $K\in L_{loc}^{p}(\Omega)$ and $\Sigma\in L_{loc}^{1+\varepsilon}(\Omega),$ where $(1-\varepsilon_{0})(1+\varepsilon)/((1-\varepsilon_{0})\varepsilon-\varepsilon_{0})<p\leq\infty$ and $\varepsilon_{0}/(1-\varepsilon_{0})<\varepsilon\leq\infty.$  If we further assume that $K$  satisfies $(\mathcal{K},Q-1)$-spherical  condition on $\Omega.$ Then, there exists $\varepsilon_{1}>0$ such that if $U_{\varepsilon_{1}}$ is the $x$-component of $f^{-1}B(y_{0},\varepsilon_{1}),$ then $\overline{U_{\varepsilon_{1}}}$ is compactly contained in $\Omega.$
\end{lem}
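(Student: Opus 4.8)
The plan is to reduce the claim to a purely topological statement and then derive a contradiction from the total disconnectedness of $f^{-1}\{y_{0}\}$ supplied by Lemma \ref{101209}. First I would note that, under the present hypotheses, Corollary \ref{cor201} guarantees that $f$ has a continuous representative, so each $f^{-1}B(y_{0},\varepsilon)$ is open and its $x$-component $U_{\varepsilon}$ is a well-defined open connected set; moreover the family $\{U_{\varepsilon}\}_{\varepsilon>0}$ is nested, with $U_{\varepsilon'}\subset U_{\varepsilon}$ whenever $\varepsilon'<\varepsilon$. Since the alternative $f\equiv y_{0}$ is incompatible with the conclusion (it would force $U_{\varepsilon}=\Omega$), I may assume we are in the second case of Lemma \ref{101209}, so that $f^{-1}\{y_{0}\}$ is totally disconnected. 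Fixing once and for all a radius $R$ with $\overline{B(x,R)}$ a compact subset of $\Omega$, it then suffices to produce $\varepsilon_{1}>0$ with $U_{\varepsilon_{1}}\subset B(x,R)$, because then $\overline{U_{\varepsilon_{1}}}\subset\overline{B(x,R)}$ is compactly contained in $\Omega$.

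I would argue by contradiction, assuming $U_{\varepsilon}\not\subset B(x,R)$ for every $\varepsilon>0$. For each such $\varepsilon$, let $V_{\varepsilon}$ be the connected component of $U_{\varepsilon}\cap B(x,R)$ containing $x$. Because $\mathbb{G}$ is locally connected, $U_{\varepsilon}$ is open and locally connected, so $V_{\varepsilon}$ is relatively open in $U_{\varepsilon}$; a standard boundary-bumping argument then shows that $\overline{V_{\varepsilon}}$ must meet $S(x,R)$, for otherwise $\overline{V_{\varepsilon}}\subset B(x,R)$ would make $V_{\varepsilon}$ also relatively closed, hence clopen, in the connected set $U_{\varepsilon}$, forcing $U_{\varepsilon}=V_{\varepsilon}\subset B(x,R)$ and contradicting the standing assumption. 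The sets $\overline{V_{\varepsilon}}$ are continua (compact connected subsets of the compact ball $\overline{B(x,R)}$) and, by the nestedness of the $U_{\varepsilon}$, they decrease as $\varepsilon\to0$.

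I would then pass to the intersection $C=\bigcap_{\varepsilon>0}\overline{V_{\varepsilon}}$. A decreasing intersection of continua inside a compact space is again a nonempty continuum, so $C$ is connected and contains $x$. Choosing $p_{\varepsilon}\in\overline{V_{\varepsilon}}\cap S(x,R)$ and extracting a convergent subsequence as $\varepsilon\to0$ (using compactness of the sphere $S(x,R)$), the limit lies in every $\overline{V_{\varepsilon}}$, hence in $C\cap S(x,R)$, so $C$ is nondegenerate. Finally, every $z\in C$ satisfies $z\in\overline{U_{\varepsilon}}\subset f^{-1}(\overline{B(y_{0},\varepsilon)})$ for all $\varepsilon>0$, whence $f(z)=y_{0}$ and therefore $C\subset f^{-1}\{y_{0}\}$. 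This exhibits a nondegenerate connected subset of the totally disconnected set $f^{-1}\{y_{0}\}$, the desired contradiction; hence some $U_{\varepsilon_{1}}\subset B(x,R)$ and $\overline{U_{\varepsilon_{1}}}$ is compactly contained in $\Omega$.

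The main difficulty here is topological rather than analytic: one must be sure that the continuum-theoretic tools—local connectedness of open sets, the boundary-bumping principle, and the fact that a decreasing intersection of continua is a continuum—are genuinely available for $\mathbb{G}$ with the Carnot--Carath\'{e}odory metric. This is the case because, via the exponential chart, $(\mathbb{G},d_{c})$ is a geodesic, proper, locally compact and locally connected metric space homeomorphic to $\mathbb{R}^{N}$, so all these classical facts apply verbatim. Once they are in place, the whole argument parallels \cite[Lemma 3.1]{KO221}, with Lemma \ref{101209} furnishing the total-disconnectedness input that drives the contradiction.
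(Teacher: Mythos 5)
Your proof is correct and takes essentially the same approach the paper intends: the paper omits the argument, deferring to the procedure of \cite[Lemma 3.1]{KO221} together with the total disconnectedness of $f^{-1}\{y_{0}\}$ from Lemma \ref{101209}, and your continuum/boundary-bumping argument is precisely that procedure transplanted to $(\mathbb{G},d_{c})$, with the needed topological facts (local connectedness, properness, decreasing intersections of continua) duly verified. The only cosmetic slip is the inclusion $\overline{U_{\varepsilon}}\subset f^{-1}\left(\overline{B(y_{0},\varepsilon)}\right)$, which is not literally meaningful since $\overline{U_{\varepsilon}}$ need not lie in $\Omega$; it is harmless here because you apply it only to points $z\in C\subset\overline{B(x,R)}\subset\Omega$, where continuity of $f$ at $z$ yields $f(z)\in\overline{B(y_{0},\varepsilon)}$ directly.
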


A Jacobian formula expresses by the degree of a Sobolev map $f$ also holds in Carnot group of $H$-type. 
\begin{lem}\label{lomlm}
For a given $\varepsilon_{0}\in(0,1),$ we suppose that $\varepsilon_{0}/(1-\varepsilon_{0})<\varepsilon\leq\infty$ and $(1-\varepsilon_{0})(1+\varepsilon)/((1-\varepsilon_{0})\varepsilon-\varepsilon_{0})<p\leq\infty.$ Let $\mathbb{G}$ be a Carnot group of $H$-type,  $y_{0}\in\mathbb{G},$ $\Omega\subset\mathbb{G}$ be a domain and $f:\Omega\rightarrow \mathbb{G}$ be an nonconstant mapping of class $W_{loc}^{1,Q}(\Omega)$ satisfying  the following inequality
$$\vert D_{h}f(x)\vert^{Q}\leq K(x)J_{f}(x)+\Sigma(x)(\rho\circ l_{y_{0}^{-1}}\circ f(x))^{Q}$$
for almost every $x\in\Omega.$ Suppose that $K\in L_{loc}^{p}(\Omega),$  $\Sigma\in L_{loc}^{1+\varepsilon}(\Omega)$  and $K$ satisfies $(\mathcal{K},Q-1)$-spherical  condition on $\Omega.$ Let $U_{\varepsilon_{1}}$ be a connected component of $f^{-1}B(y_{0},\varepsilon_{1}),$ which $\overline{U_{\varepsilon_{1}}}$ is compactly contained in $\Omega.$ Then, we have
\begin{equation}
{\rm deg}(f,U_{\varepsilon_{1}})=\frac{1}{\varepsilon_{1}^{Q}}\int_{U_{\varepsilon_{1}}}J_{f}.
\end{equation}
\end{lem}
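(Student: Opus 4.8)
The plan is to obtain the identity from the topological degree formula for continuous Sobolev maps on the $H$-type group, applied to the characteristic function of the ball $B(y_0,\varepsilon_1)$. First I would record the facts that make the two sides meaningful. By the hypotheses (which are exactly those of Corollary~\ref{cor201}) the map $f$ has a continuous representative; consequently $f(\partial U_{\varepsilon_1})\subset S(y_0,\varepsilon_1)$, so that the open ball $B(y_0,\varepsilon_1)$ lies in $\mathbb{G}\setminus f(\partial U_{\varepsilon_1})$. By Lemma~\ref{lem2.1} the integer ${\rm deg}(f,U_{\varepsilon_1},y)$ is then constant as $y$ runs over the connected set $B(y_0,\varepsilon_1)$, and this common value is precisely the number ${\rm deg}(f,U_{\varepsilon_1})$ in the statement. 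Furthermore, since $f\in W^{1,Q}_{loc}(\Omega)$ we have $\vert J_f\vert\le C\vert D_h f\vert^{Q}\in L^{1}_{loc}(\Omega)$, so every integral below is finite on the set $\overline{U_{\varepsilon_1}}$, which is compactly contained in $\Omega$.

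The analytic core is the degree (area-with-multiplicity) formula: for the continuous map $f\in W^{1,Q}(\Omega,\mathbb{G})$, a domain $U$ with $\overline U$ compactly contained in $\Omega$, and any $\omega\in \mathcal{C}_0(\mathbb{G}\setminus f(\partial U))$,
\begin{equation*}
\int_{U}(\omega\circ f)(x)\,J_f(x)\,dx=\int_{\mathbb{G}}\omega(y)\,{\rm deg}(f,U,y)\,dy.\tag{$\star$}
\end{equation*}
I would justify $(\star)$ along the classical route of Carnot-group degree theory (cf. \cite{Fed69,FG95,OR09}): it is immediate when $f$ is smooth and $\omega$ avoids the critical values, and it survives passage to a smooth approximation $f_k\to f$ because $Q$ is exactly the homogeneous dimension, so $J_{f_k}\rightharpoonup J_f$ weakly under $W^{1,Q}$-convergence while $\omega\circ f_k\to\omega\circ f$ uniformly on $\overline U$. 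Note that $(\star)$ is a signed identity, so the fact (emphasized in the Remark) that $J_f$ may change sign on $\Omega$ causes no difficulty.

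It then remains to replace the continuous weight by $\mathbf{1}_{B(y_0,\varepsilon_1)}$. I would choose $\omega_j\in \mathcal{C}_0(B(y_0,\varepsilon_1))$ with $0\le\omega_j\le1$ and $\omega_j\equiv1$ on $B(y_0,\varepsilon_1-1/j)$, so that $\omega_j\uparrow\mathbf{1}_{B(y_0,\varepsilon_1)}$ pointwise; each $\omega_j$ is admissible in $(\star)$ because its support misses $f(\partial U_{\varepsilon_1})\subset S(y_0,\varepsilon_1)$. On the right-hand side, ${\rm deg}(f,U_{\varepsilon_1},\cdot)\equiv{\rm deg}(f,U_{\varepsilon_1})$ on $B(y_0,\varepsilon_1)$ by the first paragraph, whence $\int_{\mathbb{G}}\omega_j\,{\rm deg}(f,U_{\varepsilon_1},\cdot)={\rm deg}(f,U_{\varepsilon_1})\int_{\mathbb{G}}\omega_j\to {\rm deg}(f,U_{\varepsilon_1})\,m(B(y_0,\varepsilon_1))$ by monotone convergence. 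On the left-hand side, since $\vert(\omega_j\circ f)J_f\vert\le\vert J_f\vert\in L^{1}$ and $\omega_j\circ f\to\mathbf{1}_{B(y_0,\varepsilon_1)}\circ f$ pointwise a.e., dominated convergence gives $\int_{U_{\varepsilon_1}}(\omega_j\circ f)J_f\to\int_{U_{\varepsilon_1}}(\mathbf{1}_{B(y_0,\varepsilon_1)}\circ f)J_f=\int_{U_{\varepsilon_1}}J_f$, the last equality because $f(U_{\varepsilon_1})\subset B(y_0,\varepsilon_1)$ forces $\mathbf{1}_{B(y_0,\varepsilon_1)}\circ f\equiv1$ on $U_{\varepsilon_1}$. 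Equating the two limits and using the normalization $m(B(y_0,\varepsilon_1))=\varepsilon_1^{Q}$ yields $\int_{U_{\varepsilon_1}}J_f=\varepsilon_1^{Q}\,{\rm deg}(f,U_{\varepsilon_1})$, which is the claim after dividing by $\varepsilon_1^{Q}$.

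The main obstacle I anticipate is the rigorous justification of $(\star)$ for a merely continuous $W^{1,Q}$ map, that is, the weak continuity of the formal Jacobian $J_f$ under $W^{1,Q}$-convergence on the $H$-type group together with its agreement with the topological degree; this is where the borderline integrability exponent $Q$ and the continuity furnished by Corollary~\ref{cor201} are essential. Once $(\star)$ is in hand, the remaining steps are the soft approximation and measure-theoretic limits carried out above.
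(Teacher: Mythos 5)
The paper does not actually prove Lemma \ref{lomlm}: it is stated as a fact that ``also holds in Carnot group of $H$-type,'' with no argument supplied, so there is no proof of record to compare yours against. Your soft steps are correct and would yield a complete proof modulo the identity $(\star)$: the continuity from Corollary \ref{cor201}, the inclusion $f(\partial U_{\varepsilon_1})\subset S(y_0,\varepsilon_1)$, the constancy of ${\rm deg}(f,U_{\varepsilon_1},\cdot)$ on the ball via Lemma \ref{lem2.1}, the bound $\vert J_f\vert\leq C\vert D_h f\vert^{Q}\in L^{1}_{loc}$ (which the paper itself uses at the end of the proof of its discreteness theorem), the monotone/dominated convergence passage from $\omega_j$ to $\mathbf{1}_{B(y_0,\varepsilon_1)}$, and the normalization $m(B(y_0,\varepsilon_1))=\varepsilon_1^{Q}$ coming from (\ref{nc}).

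The genuine gap is your justification of $(\star)$. You propose the Euclidean route: verify it for smooth $f$, then pass to a smooth approximation $f_k\to f$ in $W^{1,Q}$ using weak continuity of Jacobians. This breaks down on a Carnot group for a structural reason: the Jacobian $J_f$ in this paper is the determinant of the graded homomorphism $Df:\mathfrak{g}\rightarrow\mathfrak{g}$ induced by the horizontal differential, and this object is defined only for (weakly) contact maps, i.e.\ maps whose horizontal differential sends $V_1$ into $V_1$. Sobolev maps $f\in W^{1,Q}_{loc}(\Omega,\mathbb{G})$ satisfy this automatically by the results of Pansu and Vodopyanov \cite{Pan89,Vod96}, but a generic smooth map $\mathbb{G}\rightarrow\mathbb{G}$ does not; so ``it is immediate when $f$ is smooth'' is vacuous unless the approximants are smooth \emph{contact} maps, and no density theorem for smooth contact maps in $W^{1,Q}(\Omega,\mathbb{G})$ is available --- componentwise mollification destroys contactness, and non-density of smooth maps in Sobolev classes with group targets is a well-known obstruction. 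Consequently the claimed convergence $J_{f_k}\rightharpoonup J_f$ has no starting point. Moreover, the references you lean on (\cite{Fed69,FG95,OR09}) provide only the topological degree of continuous maps on $\mathbb{R}^N$; they say nothing about the identity between that degree and the integral of the formal (Pansu-type) Jacobian. What is actually needed to establish $(\star)$ is the change-of-variables/degree machinery for Sobolev mappings on Carnot groups from Vodopyanov's theory (a.e.\ $P$-differentiability, approximation by maps Lipschitz with respect to the Carnot--Carath\'eodory metric, the area formula), as in \cite{Vod07}; see also \cite{Vod99,BV23}. Replacing your mollification argument by a citation to (or reconstruction of) that theory closes the gap, and is presumably what the paper implicitly relies on when it asserts the lemma without proof.
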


We can also work  Lemma 3.5  in  \cite{KO221}  on the setting of  Carnot group of $H$-type. It allows us to make the Haar measure of a component of $f^{-1}B(y_{0},\varepsilon_{1})$ small enough as $\varepsilon_{1}$ tends to zero. The proof of Lemma 3.5  in  \cite{KO221}  use the fact that  $f^{-1}\{y_{0}\}$ is totally disconnected. Here we offer another method for proving this result on Carnot group of $H$-type.
\begin{lem}
 For a given $\varepsilon_{0}\in(0,1),$ we suppose that $\varepsilon_{0}/(1-\varepsilon_{0})<\varepsilon\leq\infty$ and $(1-\varepsilon_{0})(1+\varepsilon)/((1-\varepsilon_{0})\varepsilon-\varepsilon_{0})<p\leq\infty.$ Let $\mathbb{G}$ be a Carnot group of $H$-type,  $y_{0}\in\mathbb{G},$ $\Omega\subset\mathbb{G}$ be a domain and $f:\Omega\rightarrow \mathbb{G}$ be an nonconstant mapping of class $W_{loc}^{1,Q}(\Omega)$ satisfying  the following inequality
$$\vert D_{h}f(x)\vert^{Q}\leq K(x)J_{f}(x)+\Sigma(x)(\rho\circ l_{y_{0}^{-1}}\circ f(x))^{Q}$$
for almost every $x\in\Omega.$ Suppose that $K\in L_{loc}^{p}(\Omega),$  $\Sigma\in L_{loc}^{1+\varepsilon}(\Omega)$  and $K$ satisfies $(\mathcal{K},Q-1)$-spherical  condition on $\Omega.$ For all $\varepsilon_{1}>0,$ let $x_{0}\in f^{-1}\{y_{0}\}$ and $U_{\varepsilon_{1}}$ be the $x_{0}$-component of $f^{-1}B(y_{0},\varepsilon_{1}).$ Then
$\lim_{\varepsilon_{1}\rightarrow0^{+}}m(U_{\varepsilon_{1}})=0.$
\end{lem}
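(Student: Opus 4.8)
The plan is to combine the monotonicity of the components $\{U_{\varepsilon_1}\}$ with the measure-zero property of the fiber $f^{-1}\{y_0\}$ recorded in Lemma \ref{101209}, which lets us dispense with the total disconnectedness argument of \cite[Lemma 3.5]{KO221}. First I would note that the family is nested and decreasing as $\varepsilon_1\downarrow 0$: if $0<\varepsilon_1'\leq\varepsilon_1$, then $B(y_0,\varepsilon_1')\subset B(y_0,\varepsilon_1)$, so $f^{-1}B(y_0,\varepsilon_1')\subset f^{-1}B(y_0,\varepsilon_1)$, and since $U_{\varepsilon_1'}$ is a connected subset of the latter containing $x_0$, it must lie in its $x_0$-component $U_{\varepsilon_1}$. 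Each $U_{\varepsilon_1}$ is open (components of open sets in the locally connected group $\mathbb{G}$ are open) and hence measurable, and by Lemma \ref{9981} there is $\varepsilon_1^{\ast}>0$ with $\overline{U_{\varepsilon_1^{\ast}}}$ compactly contained in $\Omega$; since compact subsets of $\mathbb{G}$ carry finite Haar measure, $m(U_{\varepsilon_1})\leq m(\overline{U_{\varepsilon_1^{\ast}}})<\infty$ for all $0<\varepsilon_1\leq\varepsilon_1^{\ast}$.

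Next I would identify the intersection $U_0:=\bigcap_{\varepsilon_1>0}U_{\varepsilon_1}$. Any $x\in U_0$ satisfies $d_c(f(x),y_0)<\varepsilon_1$ for every $\varepsilon_1>0$, so $f(x)=y_0$, giving $U_0\subset f^{-1}\{y_0\}$. As $f$ is nonconstant, Lemma \ref{101209} provides $\mathcal{H}^{1}(f^{-1}\{y_0\})=0$. Since the homogeneous dimension of an $H$-type group satisfies $Q=\dim V_1+2\dim V_2\geq 4>1$ and the normalization $m(B(x,r))=r^{Q}$ makes $m$ Ahlfors $Q$-regular, hence comparable to the $Q$-dimensional Hausdorff measure associated with the Carnot--Carath\'{e}odory distance $d_c$, the elementary implication $\mathcal{H}^{1}(E)=0\Rightarrow\mathcal{H}^{Q}(E)=0$ (valid because $Q>1$) yields $m(f^{-1}\{y_0\})=0$, and therefore $m(U_0)=0$.

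Finally I would invoke continuity of the Haar measure from above. Working with $0<\varepsilon_1\leq\varepsilon_1^{\ast}$, where every measure is finite, the decreasing family gives $\lim_{\varepsilon_1\to 0^{+}}m(U_{\varepsilon_1})=m(U_0)=0$, the limit existing by the monotonicity of $\varepsilon_1\mapsto m(U_{\varepsilon_1})$ and being computed along any sequence $\varepsilon_1^{(k)}\downarrow 0$ (for which $\bigcap_k U_{\varepsilon_1^{(k)}}=U_0$). The one step deserving genuine care, and the only place where the $H$-type structure really enters, is the passage from $\mathcal{H}^{1}(f^{-1}\{y_0\})=0$ to $m(f^{-1}\{y_0\})=0$: one must correctly align the Carnot--Carath\'{e}odory Hausdorff measures with the Haar measure through the inequality $Q>1$. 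Once this is secured, the remainder is a routine application of continuity of measure and poses no real obstacle.
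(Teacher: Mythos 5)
Your proposal is correct, and its skeleton (nesting of the components, compact containment via Lemma \ref{9981}, identification of $\bigcap_{\varepsilon_1>0}U_{\varepsilon_1}$ with a subset of $f^{-1}\{y_0\}$, continuity of the measure from above) matches the paper's. The one substantive difference is how you obtain $m(f^{-1}\{y_0\})=0$, which is indeed the crux. The paper does \emph{not} use the stated conclusion $\mathcal{H}^1(f^{-1}\{y_0\})=0$ of Lemma \ref{101209}; instead it reaches back into the \emph{proof} of that lemma and uses the local integrability of $g=\log\log\frac{1}{\rho}\circ l_{y_0^{-1}}\circ f$: a locally integrable function is finite almost everywhere, and $g=+\infty$ exactly on $f^{-1}\{y_0\}$, so that fiber is Haar-null. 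You instead take the lemma's stated conclusion $\mathcal{H}^1(f^{-1}\{y_0\})=0$ and upgrade it via the standard monotonicity of Hausdorff measures ($\mathcal{H}^1(E)=0$ forces $\mathcal{H}^Q(E)=0$ when $Q>1$) together with Ahlfors $Q$-regularity of the Haar measure relative to $d_c$; since all homogeneous distances are bi-Lipschitz equivalent, the choice of metric underlying $\mathcal{H}^1$ is immaterial, so this step is sound. Your route buys modularity (you only quote lemma statements, not their proofs) at the cost of invoking the metric-measure comparison $m\asymp\mathcal{H}^Q$; the paper's route is more elementary but less self-contained. A further small point in your favor: you apply continuity from above only after restricting to the finite-measure sets $U_{\varepsilon_1}\subset U_{\varepsilon_1^{\ast}}$, whereas the paper bounds $m(U_{\varepsilon})$ by $m(f^{-1}B(y_0,\varepsilon))$, a quantity that need not be finite a priori and strictly speaking should also be intersected with a fixed finite-measure set before taking limits.
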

\begin{proof}
According to Lemma \ref{9981}, we see that there exists $\varepsilon_{1}>0$ such that  $U_{\varepsilon_{1}}$ is the $x_{0}$-component of $f^{-1}B(y_{0},\varepsilon_{1})$ and $\overline{U_{\varepsilon_{1}}}$ is compactly contained in $\Omega.$ Now, for all those $0<\varepsilon<\varepsilon_{1},$ we see that $m(U_{\varepsilon})\leq m(f^{-1}B(y_{0},\varepsilon)).$ Since both $U_{\varepsilon}$ and $f^{-1}B(y_{0},\varepsilon)$ are all decreasing as $\varepsilon$ decreases, we get 
$$\lim_{\varepsilon\rightarrow0^{+}}m(U_{\varepsilon})\leq \lim_{\varepsilon\rightarrow0^{+}}m(f^{-1}B(y_{0},\varepsilon)).$$
Hence, we have $\lim_{\varepsilon\rightarrow0^{+}}m(U_{\varepsilon})=0$ if $m(f^{-1}\{y_{0}\})=0.$ The later one can be derived from the local integrability for real value $g=\log\log\frac{1}{\rho}\circ l_{y_{0}^{-1}}\circ f: B\rightarrow \mathbb{R};$ see from proof the of Lemma \ref{101209} on this local integrability.
\end{proof}
The following lemma is analogue to \cite[Lemma 4.1]{KO221} in the setting of Carnot group of $H$-type. We leave the proof for the readers.
\begin{lem} \label{987887}
 For a given $\varepsilon_{0}\in(0,1),$ we suppose that $\varepsilon_{0}/(1-\varepsilon_{0})<\varepsilon\leq\infty$ and $(1-\varepsilon_{0})(1+\varepsilon)/((1-\varepsilon_{0})\varepsilon-\varepsilon_{0})<p\leq\infty.$ Let $\mathbb{G}$ be a Carnot group of $H$-type,  $y_{0}\in\mathbb{G},$ $\Omega\subset\mathbb{G}$ be a domain and $f:\Omega\rightarrow \mathbb{G}$ be an nonconstant mapping of class $W_{loc}^{1,Q}(\Omega)$ satisfying  the following inequality
$$\vert D_{h}f(x)\vert^{Q}\leq K(x)J_{f}(x)+\Sigma(x)(\rho\circ l_{y_{0}^{-1}}\circ f(x))^{Q}$$
for almost every $x\in\Omega.$ Suppose that $K\in L_{loc}^{p}(\Omega),$  $\Sigma\in L_{loc}^{1+\varepsilon}(\Omega)$  and $K$ satisfies $(\mathcal{K},Q-1)$-spherical  condition on $\Omega.$ Suppose that $U$ is a non-empty component of $f^{-1}B(y_{0},\varepsilon_{1})$ for $\varepsilon_{1}>0$ is small enough such that $\overline{U}\subset\Omega.$ Then there is a constant $C=C(Q,K,\Sigma,\Omega)>0,$ such that ${\rm deg}\;(f,U)\geq0$ if $m(U)<C.$
\end{lem}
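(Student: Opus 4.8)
The plan is to derive the positivity of the degree from two ingredients: the Jacobian--degree identity of Lemma~\ref{lomlm}, and the fact that the topological degree is integer-valued. By Lemma~\ref{lomlm}, the present hypotheses guarantee
\[
{\rm deg}(f,U)=\frac{1}{\varepsilon_{1}^{Q}}\int_{U}J_{f},
\]
so it is enough to bound $\int_{U}J_{f}$ from below. Although $J_{f}$ is allowed to change sign on $U$, the distortion inequality in the hypothesis yields the pointwise estimate
\[
J_{f}(x)\geq\frac{\vert D_{h}f(x)\vert^{Q}}{K(x)}-\frac{\Sigma(x)}{K(x)}\bigl(\rho\circ l_{y_{0}^{-1}}\circ f(x)\bigr)^{Q}\geq-\Sigma(x)\bigl(\rho\circ l_{y_{0}^{-1}}\circ f(x)\bigr)^{Q},
\]
where in the last inequality I discard the nonnegative term $\vert D_{h}f\vert^{Q}/K$ and use $K\geq1$.

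First I would use that $U$ is a component of $f^{-1}B(y_{0},\varepsilon_{1})$, hence $f(U)\subset B(y_{0},\varepsilon_{1})$. Since all homogeneous norms on $\mathbb{G}$ are equivalent, there is a constant $C_{0}=C_{0}(\mathbb{G})$ with $\rho\circ l_{y_{0}^{-1}}\circ f(x)=d_{\rho}(f(x),y_{0})\leq C_{0}\,d_{c}(f(x),y_{0})<C_{0}\varepsilon_{1}$ for every $x\in U$. Integrating the pointwise bound over $U$ and substituting into the degree identity then gives, after the decisive cancellation of the factor $\varepsilon_{1}^{Q}$,
\[
{\rm deg}(f,U)\geq-\frac{C_{0}^{Q}\varepsilon_{1}^{Q}}{\varepsilon_{1}^{Q}}\int_{U}\Sigma=-C_{0}^{Q}\int_{U}\Sigma.
\]
The point of this step is that the (unknown, possibly small) scale $\varepsilon_{1}$ drops out entirely, leaving a bound that depends on $U$ only through the mass of $\Sigma$.

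It is precisely the higher integrability $\Sigma\in L_{loc}^{1+\varepsilon}$ that turns this into a bound in terms of $m(U)$. Fixing a compact set $F$ with $\overline{U}\subset F\subset\Omega$ and applying H\"older's inequality with exponents $1+\varepsilon$ and $(1+\varepsilon)/\varepsilon$, I obtain
\[
\int_{U}\Sigma\leq\Vert\Sigma\Vert_{L^{1+\varepsilon}(F)}\,m(U)^{\varepsilon/(1+\varepsilon)},
\]
with the usual convention $\varepsilon/(1+\varepsilon)=1$ when $\varepsilon=\infty$. Combining the last two displays gives ${\rm deg}(f,U)\geq-C_{0}^{Q}\Vert\Sigma\Vert_{L^{1+\varepsilon}(F)}\,m(U)^{\varepsilon/(1+\varepsilon)}$. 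Now I invoke integrality: since ${\rm deg}(f,U)\in\mathbb{Z}$, the weaker inequality ${\rm deg}(f,U)>-1$ already forces ${\rm deg}(f,U)\geq0$. It therefore suffices to pick $C=C(Q,K,\Sigma,\Omega)>0$ so small that $m(U)<C$ makes the right-hand side exceed $-1$; explicitly one may take $C=\bigl(C_{0}^{Q}\Vert\Sigma\Vert_{L^{1+\varepsilon}(F)}\bigr)^{-(1+\varepsilon)/\varepsilon}$, which completes the argument.

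I expect the conceptual heart of the argument to be the cancellation of $\varepsilon_{1}^{Q}$ together with the integrality of the degree; the remaining difficulty is one of bookkeeping, namely ensuring that the compact set $F$ over which $\Vert\Sigma\Vert_{L^{1+\varepsilon}}$ is measured, and hence the constant $C$, can be fixed independently of the specific component $U$. This is legitimate because the relevant components are all contained in a single set of the form $U_{\varepsilon_{1}}$ whose closure is compactly contained in $\Omega$, as provided by Lemma~\ref{9981}; one may take $F$ to be that closure. A minor auxiliary point, handled above via the equivalence of homogeneous metrics, is the reconciliation of the Carnot--Carath\'eodory ball $B(y_{0},\varepsilon_{1})$ defining $U$ with the homogeneous norm $\rho$ entering the distortion inequality.
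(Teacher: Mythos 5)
Your proof is correct and is precisely the argument the paper has in mind: the paper gives no proof of this lemma, deferring to the analogue \cite[Lemma 4.1]{KO221}, whose proof is exactly your combination of the degree identity of Lemma \ref{lomlm}, the pointwise lower bound $J_{f}\geq-\Sigma\,(\rho\circ l_{y_{0}^{-1}}\circ f)^{Q}$ on $U$, H\"older's inequality with exponents $1+\varepsilon$ and $(1+\varepsilon)/\varepsilon$, and the integrality of the degree. Your closing remark on fixing the compact set $F$ (and hence making $C$ uniform over the relevant components) addresses the only genuine imprecision in the paper's statement, and resolving it via Lemma \ref{9981} by taking $F=\overline{U_{\varepsilon_{1}}}$ is the right choice, since all components used later (e.g.\ in Theorem \ref{theorem9009}) lie inside that set.
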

Now, we will establish the following discreteness for mappings with quasiregular values on Carnot group of $H$-type.
\begin{thm}\label{theorem9009}
Let $K\geq1$ be a given constant. Suppose that $\mathbb{G}$ is a Carnot group of $H$-type,  $y_{0}\in\mathbb{G},$ $\Omega\subset\mathbb{G}$ is a domain and $f:\Omega\rightarrow \mathbb{G}$ is an nonconstant mapping of class $W_{loc}^{1,Q}(\Omega)$ satisfying  the following inequality
$$\vert D_{h}f(x)\vert^{Q}\leq KJ_{f}(x)+\Sigma(x)(\rho\circ l_{y_{0}^{-1}}\circ f(x))^{Q}$$
for almost every $x\in\Omega.$ Suppose that $\Sigma\in L_{loc}^{p}(\Omega)$ for some $p>1,$ then $f^{-1}\{y_{0}\}$ is discrete.
\end{thm}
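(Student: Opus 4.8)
The plan is to turn the statement into a degree-counting argument on the sublevel sets of $\rho\circ l_{y_0^{-1}}\circ f$ near $y_0$. First I would left-translate by $y_0^{-1}$ so as to assume $y_0=0$, and replace $f$ by the continuous representative provided by Corollary \ref{cor20}; since $f$ is nonconstant, Lemma \ref{101209} then shows that $F:=f^{-1}\{y_0\}$ is totally disconnected with $\mathcal H^1(F)=0$. Fix $x_0\in F$; the goal is that $x_0$ is isolated in $F$. By Lemma \ref{9981} choose $\varepsilon_1>0$ so that the $x_0$-component $U_{\varepsilon_1}$ of $f^{-1}B(y_0,\varepsilon_1)$ satisfies $\overline{U_{\varepsilon_1}}\Subset\Omega$, and shrink $\varepsilon_1$ (using $m(U_\varepsilon)\to0$) below the threshold of Lemma \ref{987887}; then $N:={\rm deg}(f,U_{\varepsilon_1})=\varepsilon_1^{-Q}\int_{U_{\varepsilon_1}}J_f$ by Lemma \ref{lomlm} is a fixed non-negative integer.

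For $0<\varepsilon<\varepsilon_1$ I would list the connected components $\{V_j\}$ of $f^{-1}B(y_0,\varepsilon)$ that are contained in $U_{\varepsilon_1}$; each is open with $\overline{V_j}\subset U_{\varepsilon_1}$ (since $\varepsilon<\varepsilon_1$ forces $S(y_0,\varepsilon_1)\cap\overline{B(y_0,\varepsilon)}=\emptyset$) and $f(\partial V_j)\subset S(y_0,\varepsilon)$, so $y_0\notin f(\partial V_j)$, and excision together with additivity of the degree (Lemma \ref{lem2.1}) give $\sum_j{\rm deg}(f,V_j)=N$. The components missing $F$ contribute $0$. Moreover distinct points of $F\cap U_{\varepsilon_1}$ fall into distinct $V_j$ once $\varepsilon$ is small: were $x\neq x'$ to share a component for every $\varepsilon$, then $\bigcap_\varepsilon\overline{V}$ would be a connected compact subset of $F$ meeting both, contradicting total disconnectedness. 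Consequently, if every fiber component can be shown to carry degree at least $1$, then at most $N$ points of $F$ lie in $U_{\varepsilon_1}$, giving that $x_0$ is isolated and hence $f^{-1}\{y_0\}$ discrete.

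The main obstacle is to upgrade Lemma \ref{987887} from ${\rm deg}(f,V)\geq0$ to ${\rm deg}(f,V)\geq1$ on a fiber component $V$, i.e. to prove $\int_V J_f>0$; this is precisely where the admissibility of negative $J_f$ blocks the classical reasoning and where the integrability of $\Sigma$ must be used. My plan is to extract from the higher integrability of $|D_hf|$ for quasiregular values a uniform energy lower bound on small fiber components. Writing $u:=\rho\circ l_{y_0^{-1}}\circ f$, which is continuous with $u(x_0)=0$ and $u\equiv\varepsilon$ on $\partial V$ while $|\nabla_h u|\leq|D_hf|$ and $\mathrm{diam}\,V\to0$, a Morrey-type Sobolev embedding at the self-improved exponent $Q+\delta$, combined with a reverse Hölder inequality, forces $\int_V|D_hf|^Q\geq c\,\varepsilon^Q$ with $c>0$ independent of $\varepsilon$. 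Inserting this into $\int_V|D_hf|^Q\leq K\int_V J_f+\varepsilon^Q\int_V\Sigma$ and using $\int_V\Sigma\leq\|\Sigma\|_{L^p}\,m(V)^{1-1/p}\to0$ yields $\int_V J_f\geq K^{-1}\varepsilon^Q\bigl(c-\int_V\Sigma\bigr)>0$ for $\varepsilon$ small; as the degree is an integer, ${\rm deg}(f,V)\geq1$. The delicate point is to run the energy estimate so that it genuinely concerns the component $V$ on which $u$ is pinned rather than an enclosing ball; once that is reconciled, the counting argument above closes the proof.
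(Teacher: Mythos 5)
Your outer counting scheme is sound and is essentially interchangeable with the paper's own argument for Theorem \ref{theorem9009}: the paper combines the same ingredients (Lemmas \ref{9981}, \ref{987887}, \ref{lomlm}, and total disconnectedness from Lemma \ref{101209}), arranged as a strictly descending chain of non-negative integer degrees rather than your ``at most $N$ fiber points in $U_{\varepsilon_1}$'' count. The entire weight of the theorem therefore rests on the step you defer to the last paragraph: upgrading ${\rm deg}(f,V)\geq 0$ to ${\rm deg}(f,V)\geq 1$ on a small fiber component $V$. That step is exactly the paper's Lemma \ref{lemma000829992}, and the route you sketch for it has a genuine gap.

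Two concrete problems. First, you invoke ``higher integrability of $\vert D_hf\vert$'' (self-improvement to an exponent $Q+\delta$ via a reverse H\"older inequality) as if it were an available tool for quasiregular values on Carnot groups. It is neither established in the paper nor easily obtainable: the only integration-by-parts identity at hand, Lemma \ref{l00}, carries the weight $\vert\nabla_h\rho\circ f\vert$, which by (\ref{0707}) degenerates where the horizontal part of $l_{y_0^{-1}}\circ f$ vanishes, so one cannot extract from it a Caccioppoli inequality for $\vert D_hf\vert$ itself and feed it to Gehring's lemma. Second, even granting $\vert D_hf\vert\in L_{loc}^{Q+\delta}$, Morrey embedding and reverse H\"older inequalities are ball-based statements: what they yield is an oscillation--energy estimate of the form $\varepsilon\lesssim\bigl(\int_{2B}\vert D_hf\vert^Q\bigr)^{1/Q}$ for a metric ball $2B$ reaching from the fiber point to $\partial V$, and on such a ball neither $\rho\circ l_{y_0^{-1}}\circ f\leq\varepsilon$ nor the degree formula of Lemma \ref{lomlm} holds, so the distortion inequality cannot be closed there. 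The localization to $V$ that you call ``delicate'' is in fact the whole difficulty: at the critical exponent $Q$ points have zero $Q$-capacity, so the pinning $u(x_V)=0$, $u=\varepsilon$ on $\partial V$ can never by itself force $\int_V\vert\nabla_h u\vert^Q\gtrsim\varepsilon^Q$. The paper circumvents all of this by arguing by contradiction from ${\rm deg}(f,U)=0$: the degree formula and a Fubini argument (Lemmas \ref{lomlm} and \ref{98788711}) then give $\int_U J_f/(\rho\circ l_{y_0^{-1}}\circ f)^Q=0$, hence $\int_U\vert D_hf\vert^Q/(\rho\circ l_{y_0^{-1}}\circ f)^Q\leq\int_U\Sigma/K<\infty$; this \emph{conditional} finiteness puts $\log\rho\circ l_{y_0^{-1}}\circ f$ in $W_{loc}^{1,Q}(U)$ (Lemma \ref{lemma3522}), and only then do a logarithmic Caccioppoli estimate from Lemma \ref{l00}, Gehring's lemma, and the Morrey-type Proposition \ref{09888888111} produce higher integrability of the weighted quantity $\vert\nabla_h\rho\circ f\vert\,\vert D_hf\vert/(\rho\circ l_{y_0^{-1}}\circ f)$ and local boundedness of the logarithm on $U$ --- contradicting $f(x_0)=y_0$. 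In short, the higher integrability the paper uses concerns the logarithmic derivative and is manufactured under the very hypothesis ${\rm deg}=0$ that one is refuting; your plan requires an unconditional substitute localized to $V$, and none is provided.
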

\begin{proof}
We should prove that for every two different points $x_{1,},x_{2}\in f^{-1}\{y_{0}\},$ there exist two neighborhoods $U_{1}$ of $x_{1}$ and $U_{2}$ of $x_{2},$ such that $U_{1}\cap U_{2}=\emptyset.$
We prove this by contradiction. Then, there exists a point $x_{0}\in f^{-1}\{y_{0}\},$ such that for every neighborhood $U$ of $x_{0},$ there always exists a point $x\in f^{-1}\{y_{0}\}\setminus\{x_{0}\},$ such that $x\in U.$

By Lemmas \ref{9981} and \ref{987887}, we see that it can chose a $\varepsilon_{0}>0,$ such that there is an non-empty $x_{0}$-component $U_{0}$ of $f^{-1}B(y_{0},\varepsilon_{0})$  satisfying the following conclusions:
 
 $(1)\;$$\overline{U_{0}}\subset\Omega;$
 
 $(2)\;$there exists a constant $C=C(Q,\mathcal{K},\Sigma,\Omega)>0$ with $m(U_{0})<C,$ such that  ${\rm deg}\;(f,U_{0})\geq0.$

Since there must exist 
$x_{1}\in U_{0}\setminus\{x_{0}\}$ and $ x_{1}\in f^{-1}\{y_{0}\}\setminus\{x_{0}\}.$ Hence, by the totally disconnectedness of $f^{-1}\{y_{0}\}$ from Lemma \ref{101209}, we can chose a $0<\varepsilon_{1}<\varepsilon_{0},$ such that $U_{1}\subset U_{0}$ is an neighborhood of $x_{0}$ satisfying that 
$x_{1}\notin U_{1}$ and $m(U_{1})\leq m(U_{0})< C.$ We let $U_{1,j}$ be the other components of $f^{-1}B(y_{0},\varepsilon_{1}),$ then there must exist a components $U_{1,j_{0}}$ of $x_{1},$ such that 
$m(U_{1,j_{0}})\leq m(U_{0})<C.$ Hence, by Lemma \ref{lemma000829992}, we have ${\rm deg}(f,U_{1,j_{0}})>0,$ which implies that 
$${\rm deg}(f,U_{0})={\rm deg}(f,U_{1})+{\rm deg}(f,U_{1,j_{0}})+\sum_{j\neq j_{0}}{\rm deg}(f,U_{1,j})>{\rm deg}(f,U_{1}).$$ 
By the same procedure, we get that there exist $U_{2},U_{3},\cdots,$ such that 
$${\rm deg}(f,U_{0})>{\rm deg}(f,U_{1})>{\rm deg}(f,U_{2})>{\rm deg}(f,U_{3})>\cdots.$$
This is impossible since  ${\rm deg}(f,U_{0})$  is not only a positive integer, but also has a upper bound controlled by the following integral value
$${\rm deg}(f,U_{0})=\frac{1}{\varepsilon_{0}^{Q}}\int_{U_{0}}J_{f}\leq\frac{1}{\varepsilon_{0}^{Q}}\int_{U_{0}}\vert Df\vert^{Q}\leq\frac{C}{\varepsilon_{0}^{Q}}\int_{U_{0}}\vert D_{h}f\vert^{Q}<\infty.$$ 
This finishes the proof.
\end{proof}
\section{Sense-preserving type result for quasiregular values}\label{sec6}
In this section, we will study the sense-preserving type result for quasiregular values on Carnot group of $H$-type. The following result is analogue to \cite[Lemma 5.4]{KO221} in the setting of Carnot group. The main tools to prove Lemma 5.4  in \cite{KO221} are the truncated logarithms defined by $\vert f\vert_{\lambda}(x):=\max\{\rho\circ f( x),\lambda\}$ for every $\lambda>0,$ and the Sobolev-Poincar\'{e} inequality on Euclidean space. In the case of Carnot group of $H$-type, we use similar truncated logarithms and Lemma \ref{101209}
 to establish the following result.
\begin{lem} \label{lemma3522}
For a given $\varepsilon_{0}\in(0,1),$ we suppose that $\varepsilon_{0}/(1-\varepsilon_{0})<\varepsilon\leq\infty$ and $(1-\varepsilon_{0})(1+\varepsilon)/((1-\varepsilon_{0})\varepsilon-\varepsilon_{0})<p\leq\infty.$ Let $\mathbb{G}$ be a Carnot group of $H$-type,  $y_{0}\in\mathbb{G},$ $\Omega\subset\mathbb{G}$ be a domain and $f:\Omega\rightarrow \mathbb{G}$ be an nonconstant mapping of class $W_{loc}^{1,Q}(\Omega)$ satisfying  the following inequality
$$\vert D_{h}f(x)\vert^{Q}\leq K(x)J_{f}(x)+\Sigma(x)(\rho\circ l_{y_{0}^{-1}}\circ f(x))^{Q}$$
for almost every $x\in\Omega.$ Suppose that $K\in L_{loc}^{p}(\Omega),$  $\Sigma\in L_{loc}^{1+\varepsilon}(\Omega)$  and $K$ satisfies $(\mathcal{K},Q-1)$-spherical  condition on $\Omega.$ If $\vert D_{h}f\vert^{Q}/K(\rho\circ l_{y_{0}^{-1}}\circ f)^{Q}\in L_{loc}^{1}(\Omega),$
then $\log\rho\circ l_{y_{0}^{-1}}\circ f\in W_{loc}^{1,q_{1}}(\Omega),$ where $q_{1}=pQ/(1+p)<Q.$ In particular, $\log\rho\circ l_{y_{0}^{-1}}\circ f\in W_{loc}^{1,Q}(\Omega)$ if $K(x)\equiv K\geq1.$
\end{lem}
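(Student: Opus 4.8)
The plan is to follow the truncated-logarithm scheme indicated just before the statement, reducing everything to two ingredients already available: the Poincar\'e inequality (Lemma \ref{-11}) and the measure-theoretic control of the fiber $f^{-1}\{y_0\}$ from Lemma \ref{101209}. First I would normalize $y_0=0$, so that $l_{y_0^{-1}}$ is the identity and $\rho\circ l_{y_0^{-1}}\circ f=\rho\circ f$, and pass to a fixed ball $B=B(x_0,r)$ with $\overline{B}\subset\Omega$; the assertion being local, it suffices to prove $\log(\rho\circ f)\in W^{1,q_1}(B)$. By Corollary \ref{cor201} the map $f$ has a continuous representative, and by Lemma \ref{101209} either $f\equiv y_0$ (excluded, as $f$ is nonconstant) or $\mathcal{H}^{1}(f^{-1}\{y_0\})=0$; in particular $m(f^{-1}\{y_0\})=0$, and $\rho\circ f$ is continuous, bounded above on $\overline{B}$, and strictly positive off a set of measure zero.

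The gradient is the easy half. Writing $u=\rho\circ f$, the chain rule together with the pointwise bound $|\nabla_h\rho|\le 1$ established in (\ref{0707}) gives $|\nabla_h\log u|\le |\nabla_h\rho\circ f|\,|D_hf|/(\rho\circ f)$ almost everywhere. Raising to the power $q_1$ and applying H\"older's inequality with exponents $Q/q_1=(1+p)/p$ and $1+p$, exactly as in (\ref{04}),
\begin{equation*}
\int_B|\nabla_h\log u|^{q_1}
\le\left(\int_B\frac{|\nabla_h\rho\circ f|^{Q}|D_hf|^{Q}}{K(\rho\circ f)^{Q}}\right)^{\frac{q_1}{Q}}\left(\int_B K^{p}\right)^{\frac{1}{1+p}},
\end{equation*}
which is finite because $(q_1/Q)(1+p)=p$, the first factor is controlled by the standing hypothesis $|D_hf|^{Q}/(K(\rho\circ f)^{Q})\in L^1_{loc}(\Omega)$ (after absorbing $|\nabla_h\rho\circ f|\le1$), and $K\in L^p_{loc}(\Omega)$. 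Hence $\nabla_h\log u\in L^{q_1}(B)$, and in the constant-$K$ case the $L^Q$ version follows directly from $|D_hf|^{Q}/(\rho\circ f)^{Q}\in L^1_{loc}$ without the H\"older splitting.

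The substantive step is $\log u\in L^{q_1}(B)$, which is delicate precisely because $\log u\to-\infty$ along $f^{-1}\{y_0\}$. For $\lambda>0$ I would introduce the truncated logarithms $v_\lambda:=\max\{\log(\rho\circ f),\log\lambda\}$. Each $v_\lambda$ is bounded, between $\log\lambda$ and $\log\sup_{\overline{B}}\rho\circ f$, hence lies in $W^{1,q_1}(B)$, and $|\nabla_h v_\lambda|\le|\nabla_h\log u|$, so the gradients are bounded in $L^{q_1}(B)$ uniformly in $\lambda$ by the previous paragraph. The Poincar\'e inequality then bounds $\int_B|v_\lambda-(v_\lambda)_B|^{q_1}$ uniformly in $\lambda$. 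To pin down the averages $(v_\lambda)_B$, I would use that $f$ is nonconstant with $m(f^{-1}\{y_0\})=0$ to fix $c>0$ small enough that $A:=\{x\in B:\rho\circ f(x)>c\}$ satisfies $m(A)\ge m(B)/2$; for $\lambda<c$ one has $\log c\le v_\lambda\le\log\sup_{\overline{B}}\rho\circ f$ on $A$, so $v_\lambda$ is bounded on $A$ uniformly in $\lambda$. Combining this with the Poincar\'e bound through $m(A)\,|(v_\lambda)_B|\le\int_A|v_\lambda|+\int_A|v_\lambda-(v_\lambda)_B|$ yields a uniform bound on $|(v_\lambda)_B|$, whence $\int_B|v_\lambda|^{q_1}$ is bounded independently of $\lambda$. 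Since $v_\lambda\to\log(\rho\circ f)$ pointwise almost everywhere as $\lambda\to0^+$, Fatou's lemma gives $\log u\in L^{q_1}(B)$, completing $\log u\in W^{1,q_1}(B)$; the same argument with $q_1$ replaced by $Q$ settles the constant-$K$ statement.

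The main obstacle I anticipate is exactly this control of the averages $(v_\lambda)_B$: the gradient estimate and the Poincar\'e inequality only govern oscillation, so the nonconstancy of $f$ (through the measure-zero fiber supplied by Lemma \ref{101209}) must be invoked to produce the good set $A$ on which $v_\lambda$ stays uniformly bounded. Everything else, namely the chain-rule gradient bound, the H\"older exponent bookkeeping, and the passage to the limit, is routine.
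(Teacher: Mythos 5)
Your proposal is correct and takes essentially the same approach as the paper's proof: the identical truncated logarithms $g_{\lambda}=\log\max\{\rho\circ f,\lambda\}$, the same H\"{o}lder splitting with exponents $(1+p)/p$ and $1+p$ giving a $\lambda$-uniform $L^{q_{1}}$ gradient bound, the Poincar\'{e} inequality of Lemma \ref{-11} for the oscillation, a positive-measure set where $\rho\circ f$ is pinned away from $0$ (ultimately supplied by Lemma \ref{101209}) to control the averages $(g_{\lambda})_{B}$, and a limit theorem to conclude. The only cosmetic differences are that the paper selects the good set as $\{t_{0}<\rho\circ f<t_{1}\}$ rather than $\{\rho\circ f>c\}$ and invokes monotone convergence instead of Fatou's lemma.
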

\begin{proof}
Without loss of generality, we here only consider the case for $y_{0}=0,$ $K\in L^{p}(\Omega),$  $\Sigma\in L^{1+\varepsilon}(\Omega)$ and $\vert D_{h}f\vert^{Q}/K(\rho\circ l_{y_{0}^{-1}}\circ f)^{Q}\in L^{1}(\Omega).$ By letting $g_{\lambda}:=\log\vert f\vert_{\lambda}:\Omega\rightarrow\mathbb{R}$ for every $\lambda>0,$ we see that $g_{\lambda}\in W^{1,Q}(\Omega).$ For every fixed $x_{0}\in\Omega,$ we chose $B=B(x_{0},r)$ which compactly contained in $\Omega.$ Since 
\begin{equation}\label{1009982887}
\int_{B}\vert\nabla_{h}g_{\lambda}\vert^{q_{1}}\leq\int_{B}\frac{\vert D_{h}f\vert^{q_{1}}}{(\rho\circ f)^{q_{1}}}=\int_{B}\left(\frac{\vert D_{h}f\vert^{Q}}{K(\rho\circ f)^{Q}}\right)^{\frac{q_{1}}{Q}}K^{\frac{q_{1}}{Q}}\leq C,
\end{equation}
where $C=C(Q,q_{1},\Omega,\mathcal{K}, \| \vert D_{h}f\vert^{Q}/K(\rho\circ l_{y_{0}^{-1}}\circ f)^{Q}\|_{L^{1}(\Omega)})$ is a constant does not depend on $\lambda,$ we see by Poincar\'{e} inequality in Lemma \ref{-11}  that 
 $$\int_{B}\vert g_{\lambda}-(g_{\lambda})_{B}\vert dx
\leq C(B)\int_{B}\vert g_{\lambda}-(g_{\lambda})_{B}\vert^{q_{1}} dx\leq C(B)\int_{B}\vert \nabla_{h}g_{\lambda}\vert^{q_{1}} dx\leq C,$$
 where $C=C(Q,q_{1},\Omega,\mathcal{K}, \| \vert D_{h}f\vert^{Q}/K(\rho\circ l_{y_{0}^{-1}}\circ f)^{Q}\|_{L^{1}(\Omega)})$ is a constant does not depend on $\lambda.$ 
 Without loss of generality, we may assume that $\rho\circ f(x)<1$ and $0<\lambda<1.$ In addition, from the proof of Lemma  \ref{101209},  we see that $\log\log\frac{1}{\rho}\circ f\in W_{loc}^{1,q_{1}}(\Omega),$ where $q_{1}=pQ/(1+p).$ Hence, $f^{-1}\{y_{0}\}$ has zero Haar measure. So we may assume that $\rho\circ f(x)$ is not identically $0$ on $B.$ Then, there exists $0<t_{0}<t_{1}<1,$ such that the set $A:=\{x\in B:t_{0}<\rho\circ f(x)<t_{1}<1\}$ has positive  measure. In addition, 
 \begin{equation}
\begin{aligned}
&\int_{B}\vert g_{\lambda}-(g_{\lambda})_{B}\vert dx\geq\int_{A}\vert g_{\lambda}-(g_{\lambda})_{B}\vert dx\\
\geq&\int_{A}(\vert (g_{\lambda})_{B}\vert-\vert g_{\lambda}\vert )dx\geq\int_{A}(\vert (g_{\lambda})_{B}\vert-\log\frac{1}{t_{0}})dx.
\end{aligned}
\end{equation}
  Hence,
  \begin{equation}\label{0009j}
  \int_{B}\vert g_{\lambda}\vert dx\leq C(m(A),B,t_{0},Q).
  \end{equation}
By monotone convergence theorem, we get that $\log\rho\circ f\in L^{1}(B).$ We next will show that $\log\rho\circ f\in  L^{q_{1}}(B).$ By Poincar\'{e} inequality   (\ref{0311888888000})  and inequalities (\ref{1009982887}) and (\ref{0009j}), we get 
\begin{equation}
\begin{aligned}
&\int_{B}\vert g_{\lambda}\vert^{q_{1}}dx\\
\leq& C(q_{1})\int_{B}\vert g_{\lambda}-(g_{\lambda})_{B}\vert^{q_{1}}dx+C(q_{1})\int_{B}\vert(g_{\lambda})_{B}\vert^{q_{1}}dx\\
\leq& C(B,q_{1})\int_{B}\vert \nabla_{h}g_{\lambda}\vert^{q_{1}}dx+C(B,q_{1})\left(\int_{B} \vert\log\rho\circ f\vert dx\right)^{q_{1}}\leq C,\\
\end{aligned}
\end{equation}
where $C$ depends only on $B,q_{1},Q $ and $\| \log\rho\circ f\|_{L^{1}(B)}.$ Hence, by  monotone convergence theorem again, we get that $\log\rho\circ f\in L^{q_{1}}(B).$ In addition,  monotone convergence Theorem  and inequality (\ref{1009982887}) make sure that 
$\nabla_{h}\log\rho\circ f\in L^{q_{1}}(B).$ Therefore, we have $\log\log\frac{1}{\rho}\circ f\in W^{1,q_{1}}(B),$ where $q_{1}=pQ/(1+p).$ This finishes the proof. 
\end{proof}

The following lemma is analogue to \cite[Lemma 4.2]{KO221} in the setting of Carnot group of $H$-type. We leave the proof for the readers.
\begin{lem} \label{98788711}
Let $\varepsilon_{0}\in(0,1)$ be a given constant. Suppose that $\mathbb{G}$ is a Carnot group of $H$-type,  $y_{0}\in\mathbb{G},$ $\Omega\subset\mathbb{G}$ is a domain and $f:\Omega\rightarrow \mathbb{G}$ is an nonconstant mapping of class $W_{loc}^{1,Q}(\Omega)$ satisfying  the following inequality
$$\vert D_{h}f(x)\vert^{Q}\leq K(x)J_{f}(x)+\Sigma(x)(\rho\circ l_{y_{0}^{-1}}\circ f(x))^{Q}$$
for almost every $x\in\Omega.$ Suppose that $K\in L_{loc}^{p}(\Omega),$  $\Sigma\in L_{loc}^{1+\varepsilon}(\Omega),$ where $(1-\varepsilon_{0})(1+\varepsilon)/((1-\varepsilon_{0})\varepsilon-\varepsilon_{0})<p\leq\infty$ and $\varepsilon_{0}/(1-\varepsilon_{0})<\varepsilon\leq\infty,$  and $K$ satisfies $(\mathcal{K},Q-1)$-spherical condition on $\Omega.$ Suppose that $U$ is a non-empty component of $f^{-1}B(y_{0},\varepsilon)$ for $\varepsilon>0$ is small enough such that $\overline{U}\subset\Omega$ with $m(U)<C,$ where $C=C(Q,K,\Sigma,\Omega)>0$ is a constant given by Lemma \ref{987887}. If  ${\rm deg}(f,U)=0,$ then we have
\begin{equation}
\int_{U\cap f^{-1}B(y_{0},r)}J_{f}=0
\end{equation}
for every $r\in(0,\varepsilon).$ 
\end{lem}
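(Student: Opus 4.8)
The plan is to reduce the statement to the Jacobian--degree formula of Lemma \ref{lomlm} applied componentwise, after a routine but careful piece of topological bookkeeping. Fix $r\in(0,\varepsilon)$ and set $V_r:=U\cap f^{-1}B(y_0,r)$. Since $f^{-1}B(y_0,r)\subset f^{-1}B(y_0,\varepsilon)$ and $U$ is a connected component of the latter, every connected component $W$ of the open set $f^{-1}B(y_0,r)$ is either contained in $U$ or disjoint from it; hence the connected components $\{W_i\}_{i}$ of $V_r$ are precisely those components of $f^{-1}B(y_0,r)$ that lie inside $U$. Each $W_i$ satisfies $\overline{W_i}\subset\overline{U}\subset\Omega$ (so $\overline{W_i}$ is compact), $m(W_i)\le m(U)<C$, and, by continuity of $f$ together with the local connectedness of $\mathbb{G}$, $f(\partial W_i)\subset S(y_0,r)$. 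In particular $f|_{\overline{W_i}}$ is proper and each triple $(f,W_i,y)$ with $y\in B(y_0,r)$ is admissible, so Lemma \ref{lomlm} applies to every $W_i$ and gives
$$\int_{W_i}J_f=r^{Q}\,{\rm deg}(f,W_i).$$

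Next I would relate $\sum_i{\rm deg}(f,W_i)$ to ${\rm deg}(f,U)$. Pick any $y\in B(y_0,r)$. Because $f(\partial U)\subset S(y_0,\varepsilon)$ and $B(y_0,\varepsilon)$ is connected, the first item of Lemma \ref{lem2.1} gives ${\rm deg}(f,U,y)={\rm deg}(f,U)$, and likewise ${\rm deg}(f,W_i,y)={\rm deg}(f,W_i)$ for each $i$. Since $y\in B(y_0,r)$ forces $f^{-1}\{y\}\cap U\subset V_r$, the excision property (second item of Lemma \ref{lem2.1}) yields ${\rm deg}(f,V_r,y)={\rm deg}(f,U,y)$, and the additivity property (third item) yields ${\rm deg}(f,V_r,y)=\sum_i{\rm deg}(f,W_i,y)$. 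The countable sum is in fact finite: as $\overline{U}$ is compact and $f$ is continuous, $f^{-1}\{y\}\cap\overline{U}$ is compact and therefore meets only finitely many of the disjoint open sets $W_i$, so all but finitely many degrees vanish. Combining these identities gives $\sum_i{\rm deg}(f,W_i)={\rm deg}(f,U)$.

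Finally, summing the componentwise formula over the disjoint $W_i$ (legitimate since $J_f\in L^1(\overline{U})$, so the set function $E\mapsto\int_E J_f$ is countably additive) and inserting the hypothesis ${\rm deg}(f,U)=0$ gives
$$\int_{V_r}J_f=\sum_i\int_{W_i}J_f=r^{Q}\sum_i{\rm deg}(f,W_i)=r^{Q}\,{\rm deg}(f,U)=0,$$
which is the claim. One may note that the hypothesis $m(W_i)\le m(U)<C$ together with Lemma \ref{987887} forces ${\rm deg}(f,W_i)\ge0$ for every $i$; combined with $\sum_i{\rm deg}(f,W_i)=0$ this shows each ${\rm deg}(f,W_i)=0$, so that in fact $\int_{W_i}J_f=0$ on every individual component, a slightly stronger conclusion than required.

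I expect the main obstacle to be the topological bookkeeping rather than any analysis: one must verify that the components of $V_r$ coincide with the components of $f^{-1}B(y_0,r)$ sitting inside $U$, that each resulting triple is admissible (so that the degree and Lemma \ref{lomlm} are both available), and---most delicately---that the additivity of the degree and of the integral over the possibly countable family $\{W_i\}$ is justified, which rests on the properness of $f$ on $\overline{U}$ to reduce the sum to a finite one. The remaining steps are then immediate consequences of the degree theory recalled in Lemma \ref{lem2.1} and the Jacobian formula of Lemma \ref{lomlm}.
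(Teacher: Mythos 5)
Your proof is correct and follows essentially the route the paper intends: the paper gives no proof of this lemma at all, deferring to the reader with a pointer to \cite[Lemma 4.2]{KO221}, and that argument is exactly your scheme --- decompose $U\cap f^{-1}B(y_{0},r)$ into components of $f^{-1}B(y_{0},r)$ lying in $U$, apply the Jacobian--degree identity of Lemma \ref{lomlm} on each component, and combine via the excision and additivity properties of Lemma \ref{lem2.1} to get $\sum_{i}{\rm deg}(f,W_{i})={\rm deg}(f,U)=0$. The only (harmless) difference is that the cited Euclidean proof first invokes the nonnegativity result (here Lemma \ref{987887}) to force each ${\rm deg}(f,W_{i})=0$ before integrating, whereas you sum the identities directly and record the per-component vanishing only as an optional remark.
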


By Lemmas \ref{987887} and \ref{98788711}, we have the following Lemma \ref{lemma000829992}, which is a Carnot group of $H$-type version of \cite[Lemma 4.4]{KO222}. The main tool to obtain Lemma 4.4 in \cite{KO222} is a beautifully Logarithmic isoperimetric inequality, which established by  Kangasniemi and Onninen in \cite[Lemma 6.1]{KO221}.  The proof of this Logarithmic isoperimetric inequality
in \cite[Lemma 6.1]{KO221} used  the following Sobolev embedding theorem on spheres 
\begin{equation}\label{shpere1}
{\rm osc}(f,\partial B(x,r))\leq C_{n}r^{\frac{1}{n}}\left(\int_{\partial B(x,r)}\vert Df\vert^{n}\right)^{\frac{1}{n}},
\end{equation}
  on the setting of Euclidean space. It seems that 
 it is interesting to obtain the analogous Sobolev embedding theorem on spheres (\ref{shpere1}) on the setting of Carnot group. In \cite[Corollary  1]{Vod96},  Vodoptyanov showed that if  
 $f\in W_{loc}^{1,Q}(\Omega),$ then the following inequality
 \begin{equation}\label{shpere2}
{\rm osc}(f,\partial B(x,r))\leq C_{Q}r^{\frac{1}{Q}}\left(\int_{\partial B(x,r)}M_{c_{1} r}^{Q}(\vert D_{h}f\vert)d\sigma\right)^{\frac{1}{Q}}
\end{equation}
  holds for almost every $r\in(0,{\rm dist}(x,\partial\Omega)).$ Here, $c_{1}$ is the constant in the triangle inequality in (\ref{00oo}) and $M_{\delta}(g)$ is the maximal function of a locally summable function $g$ which  defined by
 $$M_{\delta}g(x)=\sup\Bigg\{\frac{1}{m(B(r))}\int_{B(r)}\vert g(x)\vert dx: r\leq\delta\Bigg\}.$$
 Yet, there are some differences between the integrals in right side of inequality  (\ref{shpere2}) and inequality  (\ref{shpere1}). We here use another  method  to prove the following Lemma \ref{lemma000829992}. Our method bypasses the use of  Sobolev embedding theorem on spheres on the setting of Carnot group of $H$-type.
\begin{lem} \label{lemma000829992}
 Let $K\geq1$ be a given constant. Suppose that $\mathbb{G}$ is a Carnot group of $H$-type,  $y_{0}\in\mathbb{G},$ $\Omega\subset\mathbb{G}$ is a domain and $f:\Omega\rightarrow \mathbb{G}$ is an nonconstant mapping of class $W_{loc}^{1,Q}(\Omega)$ satisfying  the following inequality
$$\vert D_{h}f(x)\vert^{Q}\leq KJ_{f}(x)+\Sigma(x)(\rho\circ l_{y_{0}^{-1}}\circ f(x))^{Q}$$
for almost every $x\in\Omega.$ Suppose that $\Sigma\in L_{loc}^{p}(\Omega)$ for some $p>1$  and $U$ is a non-empty component of $f^{-1}B(y_{0},\varepsilon)$ for $\varepsilon>0$ is small enough such that $\overline{U}\subset\Omega$ with $m(U)<C,$ where $C=C(Q,K,\Sigma,\Omega)>0$ is a constant given by Lemma \ref{987887}. Then, ${\rm deg}\;(f,U)>0.$
\end{lem}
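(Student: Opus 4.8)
The plan is to argue by contradiction, and the heart of the matter is to convert the vanishing of the Jacobian flux into a weighted energy bound via a push-forward trick, thereby bypassing the Euclidean Sobolev-on-spheres estimate used in \cite{KO222}. After composing with the left translation $l_{y_0^{-1}}$, which is an automorphism of $\mathbb{G}$ preserving the distortion structure, I may assume $y_0=0$ and write $w=\rho\circ f$; shrinking $\varepsilon$ I arrange $0\le w<\varepsilon<1/e$ on $U$, so $\log(1/w)>1$ there. Lemma \ref{987887} already gives $\mathrm{deg}(f,U)\ge0$, so it suffices to exclude $\mathrm{deg}(f,U)=0$. Assuming this, Lemma \ref{98788711} yields $\int_{U\cap f^{-1}B(0,r)}J_f=0$ for every $r\in(0,\varepsilon)$. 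Fixing $x_0\in f^{-1}\{0\}\cap U$, I pass to the $x_0$-component $V_r$ of $f^{-1}B(0,r)$. Since $f$ is nonconstant, $f^{-1}\{0\}$ is totally disconnected with $\mathcal{H}^1(f^{-1}\{0\})=0$ by Lemma \ref{101209}, so $\bigcap_r V_r$ is a continuum contained in $f^{-1}\{0\}$, hence $\bigcap_r V_r=\{x_0\}$ and $m(V_r)\to0$. Writing $U\cap\{w<t\}$ as a disjoint union of components, the degree formula (Lemma \ref{lomlm}) together with the nonnegativity of every component's degree (Lemma \ref{987887}, valid once $m$ is small) forces $\mathrm{deg}(f,V_t)=0$, that is $\int_{V_r\cap\{w<t\}}J_f=0$ for all $t\le r$.

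The crucial step is to regard $J_f\,dx$ as a finite signed measure on $V_r$, finiteness coming from $\abs{J_f}\le\abs{Df}^Q\le C\abs{D_hf}^Q\in L^1$. The identity $\int_{V_r\cap\{w<t\}}J_f=0$ for all $t$ says exactly that the push-forward of this measure under the continuous map $w$ has identically vanishing distribution function, hence is the zero measure; consequently $\int_{V_r}\Phi(w)\,J_f=0$ for every bounded Borel $\Phi$. Multiplying the distortion inequality $\abs{D_hf}^Q/K\le J_f+\Sigma w^Q/K$ by the nonnegative bounded weight $\Phi_\delta(w)=\max\{w,\delta\}^{-Q}$ and integrating over $V_r$, the $J_f$-term drops out, and letting $\delta\to0$ by monotone convergence (here $K\ge1$ is a genuine constant) I obtain the key bound $\int_{V_r}\abs{D_hf}^Q/w^Q\le\int_{V_r}\Sigma$. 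Because $\Sigma\in L^p_{loc}$ with $p>1$, H\"older gives $\int_{V_r}\abs{D_hf}^Q/w^Q\le\|\Sigma\|_{L^p(\Omega)}\,m(V_r)^{1-1/p}\to0$. Note this estimate is stronger than the higher-integrability estimate (\ref{0000018}) precisely because the logarithmic weight has been removed; in particular $\log w\in W^{1,Q}_{loc}(U)$ and its $Q$-energy concentrates to zero at the preimage $x_0$.

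Finally I turn this vanishing energy into a capacity statement and contradict it geometrically. For $r/2<w<r$ the truncation $\varphi=\min\{1,\log(r/w)/\log 2\}$ is admissible for the ring condenser $(\overline{V_{r/2}},V_r)$, and since $\abs{\nabla_h\log w}\le\abs{D_hf}/w$ its energy is at most $(\log 2)^{-Q}\int_{V_r}\abs{D_hf}^Q/w^Q$, whence $\mathrm{Cap}_Q(\overline{V_{r/2}},V_r)\le C\|\Sigma\|_{L^p(\Omega)}\,m(V_r)^{1-1/p}\to0$. On the other hand $V_{r/2}$ and $V_r$ are genuine continua governed by the level structure $f(\partial V_s)\subset S(0,s)$, which forces $\mathrm{osc}_{V_s}w=s$, and on an Ahlfors $Q$-regular Loewner space such as a Carnot group of $H$-type the $Q$-capacity of a ring between two continua is bounded below once their diameters are comparable. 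The main obstacle is exactly to secure this comparability: I expect to select a sequence $r_k\to0$ along which $\mathrm{diam}\,V_{r_k/2}$ stays comparable to $\mathrm{diam}\,V_{r_k}$ (using the continuity of $s\mapsto V_s$ and the forced oscillation of $w$), so that the Loewner lower bound applies and contradicts the vanishing upper bound. This scale-matching, enabled by the unweighted gradient bound of the previous paragraph, is the device that replaces the Sobolev embedding on spheres of \cite{KO222}; once it is in place $\mathrm{deg}(f,U)=0$ is impossible and $\mathrm{deg}(f,U)>0$ follows.
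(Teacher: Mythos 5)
Your first half is sound, and in fact it reproduces the paper's Step 1 by a cleaner route: the paper derives $\int_{U}J_{f}/(\rho\circ l_{y_{0}^{-1}}\circ f)^{Q}=0$ and the bound $\int_{U}\vert D_{h}f\vert^{Q}/(K(\rho\circ l_{y_{0}^{-1}}\circ f)^{Q})\leq\int_{U}\Sigma/K$ from Lemmas \ref{lomlm} and \ref{98788711} via a Fubini--Tonelli computation with the kernel $Qr^{-Q-1}$, whereas you obtain the same identity by observing that the push-forward of the signed measure $J_{f}\,dx$ under $w=\rho\circ l_{y_{0}^{-1}}\circ f$ vanishes and then testing with $\Phi_{\delta}(w)=\max\{w,\delta\}^{-Q}$; both are correct and equivalent in substance. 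The divergence, and the genuine gap, is in your endgame. The Loewner lower bound for the condenser $(\overline{V_{r/2}},V_{r})$ requires the relative distance to be controlled, i.e.\ ${\rm diam}\,V_{r}\leq C\,{\rm diam}\,V_{r/2}$ along some sequence $r_{k}\to0$, and you explicitly leave this ``scale-matching'' as an expectation. The tools you cite for it --- continuity of $s\mapsto V_{s}$ and the forced oscillation ${\rm osc}_{\overline{V_{s}}}w=s$ --- do \emph{not} suffice: mere continuity is compatible with ${\rm diam}\,V_{s}$ decaying like $e^{-1/s}$, in which case ${\rm diam}\,V_{r}/{\rm diam}\,V_{r/2}\to\infty$ for \emph{every} sequence and the Loewner bound never applies, while the forced oscillation only says $w$ oscillates by $s$ over a set of diameter ${\rm diam}\,V_{s}$, which is a statement no stronger than continuity. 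Your closing remark that the unweighted gradient bound ``enables'' the scale-matching is not substantiated; that bound gives the capacity upper bound, not the diameter comparison.

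The gap is fillable, but only by importing a quantitative modulus of continuity: by Corollary \ref{cor20} together with the explicit H\"{o}lder exponents of Theorem \ref{thm20}/Corollary \ref{corroo}, $f$ is locally $\gamma$-H\"{o}lder near $x_{0}$ for some $\gamma>0$, whence $w=s$ on $\partial V_{s}$ forces $B(x_{0},(s/C)^{1/\gamma})\subset V_{s}$, i.e.\ ${\rm diam}\,V_{s}\geq c\,s^{1/\gamma}$; an iteration then shows that if ${\rm diam}\,V_{r}>M\,{\rm diam}\,V_{r/2}$ for all small $r$ with $M>2^{1/\gamma}$, the diameters would decay faster than $s^{1/\gamma}$, a contradiction, so scale-matching holds along a sequence. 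You would additionally need to invoke that a Carnot group of $H$-type is an Ahlfors $Q$-regular $Q$-Loewner space (true, but nowhere used or cited in the paper) and handle the connectivity of the outer plate of the condenser. The paper avoids all of this machinery: from the same key bound it runs a Caccioppoli estimate (Lemma \ref{l00}) and Gehring's lemma \cite[Theorem 3.3]{Gol05} to get higher integrability of $\vert\nabla_{h}\rho\circ f\vert\vert D_{h}f\vert/(\rho\circ f)$, converts this into a Morrey-type decay for $\nabla_{h}\log\rho\circ l_{y_{0}^{-1}}\circ f$, and applies Proposition \ref{09888888111} to conclude $\log\rho\circ l_{y_{0}^{-1}}\circ f\in L_{loc}^{\infty}(U)$, which is immediately contradicted by the blow-up of this function at $x_{0}\in f^{-1}\{y_{0}\}\cap U$. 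As written, your proposal does not constitute a proof; it becomes one only after the H\"{o}lder-based diameter estimate above is supplied.
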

\begin{proof}
According to Lemma \ref{987887}, we see that ${\rm deg}\;(f,U)\geq0.$ We further prove that ${\rm deg}(f,U)\neq0.$ We prove this by contradiction if ${\rm deg}(f,U)=0$ under the conditions of Lemma \ref{lemma000829992}. We split the details of proof into following three steps: 

${\rm \mathbf{Step}\;\mathbf{1}}$:  We conclude that
\begin{equation}\label{Lemmamma}
\int_{U}\frac{\vert D_{h}f\vert^{Q}}{(\rho\circ l_{y_{0}^{-1}}\circ f)^{Q}}<\infty\;\;\;\;\;\;\;\;{\rm and }\;\;\;\;\;\;\;\int_{U}\frac{J_{f}}{(\rho\circ l_{y_{0}^{-1}}\circ f)^{Q}}=0.
\end{equation}
The proof of those results are similar to the one given by \cite[Lemma 4.3]{KO222}. For the convenience of readers, we here provide the detailed proof of inequalities (\ref{Lemmamma}). By splitting $J_{f}$ into its positive and negative parts $J_{f}=J_{f}^{+}-J_{f}^{-},$ then we have
$$\frac{1}{K}\frac{\vert D_{h}f(x)\vert^{Q}}{(\rho\circ l_{y_{0}^{-1}}\circ f)^{Q}}+\frac{J_{f}^{-}}{(\rho\circ l_{y_{0}^{-1}}\circ f)^{Q}}\leq \frac{J_{f}^{+}}{(\rho\circ l_{y_{0}^{-1}}\circ f)^{Q}}+\frac{\Sigma}{K},$$
which implies that 
$$\frac{J_{f}^{-}}{(\rho\circ l_{y_{0}^{-1}}\circ f)^{Q}}\leq \frac{\Sigma}{K}.$$
Hence, we get
$$\int_{U}\frac{J_{f}^{-}}{(\rho\circ l_{y_{0}^{-1}}\circ f)^{Q}}\leq \int_{U}\frac{\Sigma}{K}<\infty.$$
Now, let $U_{r}=U\cap f^{-1}B(y_{0},r)$ for every $r\in(0,\varepsilon).$ Then, according to Lemma \ref{98788711}, we have
$$\int_{U_{r}}J_{f}^{+}=\int_{U_{r}}J_{f}^{-},$$
which gives that 
$$\int_{0}^{\varepsilon}Qr^{-Q-1}\int_{U_{r}}J_{f}^{+}(x)dxdr=\int_{0}^{\varepsilon}Qr^{-Q-1}\int_{U_{r}}J_{f}^{-}(x)dxdr.$$
By Fubini-Tonelli theorem, we get
$$\int_{U}J_{f}^{+}(x)\int_{\rho\circ l_{y_{0}^{-1}}\circ f(x)}^{\varepsilon}Qr^{-Q-1}drdx=\int_{U}J_{f}^{-}(x)\int_{\rho\circ l_{y_{0}^{-1}}\circ f(x)}^{\varepsilon}Qr^{-Q-1}drdx.$$
Namely,
$$\int_{U}\frac{J_{f}^{+}}{(\rho\circ l_{y_{0}^{-1}}\circ f)^{Q}}-\int_{U}\frac{J_{f}^{+}}{\varepsilon^{Q}}=\int_{U}\frac{J_{f}^{-}}{(\rho\circ l_{y_{0}^{-1}}\circ f)^{Q}}-\int_{U}\frac{J_{f}^{-}}{\varepsilon^{Q}}.$$
Combing this with Lemma \ref{lomlm} and ${\rm deg}\;(f,U)=0,$ we have
$$\int_{U}\frac{J_{f}^{+}}{(\rho\circ l_{y_{0}^{-1}}\circ f)^{Q}}=\int_{U}\frac{J_{f}^{-}}{(\rho\circ l_{y_{0}^{-1}}\circ f)^{Q}}.$$
So we have
$$\int_{U}\frac{J_{f}}{(\rho\circ l_{y_{0}^{-1}}\circ f)^{Q}}=0$$
and hence 
\begin{equation}\label{00001010101010190091}
\int_{U}\frac{1}{K}\frac{\vert D_{h}f\vert^{Q}}{(\rho\circ l_{y_{0}^{-1}}\circ f)^{Q}}\leq \int_{U}\frac{J_{f}}{(\rho\circ l_{y_{0}^{-1}}\circ f)^{Q}}+\int_{U}\frac{\Sigma}{K}=\int_{U}\frac{\Sigma}{K}<\infty.
\end{equation}
Hence, the proof of inequalities (\ref{Lemmamma}) is finished.

${\rm \mathbf{Step}\;\mathbf{2}}$: Showing that there exists $\beta_{1}>1,$ such that $$\frac{\vert D_{h}\rho\circ f\vert\vert D_{h}f\vert}{\rho\circ f}\in L_{loc}^{\beta_{1} Q}(U).$$

By donating $g=\log\rho\circ l_{y_{0}^{-1}}\circ f:U\rightarrow\mathbb{R}.$ Then, the inequality (\ref{00001010101010190091}) and Lemma \ref{lemma3522} provides us that $g\in W_{loc}^{1,Q}(U).$ Without loss of generality, we may assume that $g\in W^{1,Q}(U).$ For every fixed $x\in U,$ we let $0<r<2r<{\rm dist}(x,\partial U).$ Let $\varphi\in \mathcal{C}_{0}^{\infty}(B(x,2r))$ such that $\vert D_{h}\varphi \vert\leq1/r$ on $B(x,2r)$ and $\varphi=1$ on $B(x,r).$
By considering the following function
\begin{equation}
  \psi_{\varepsilon}(t)=\left\{
\begin{aligned}
&\vert\log(\varepsilon+t)-c\vert,      &   &    {\rm if}\;{t\geq a},\\
&\vert\log(\varepsilon+a)-c\vert,       &      &{\rm if}\; {t\leq a.}
\end{aligned} \right.
\end{equation}
Then, according to Lemma \ref{l00}, we get
\begin{equation}
\begin{aligned}
&\abs{\int_{B(x,2r)}\varphi\frac{\vert\nabla_{h}\rho\circ f\vert^{Q}}{(\rho\circ f)^{Q-1}(\varepsilon+\rho\circ f)}J_{f}}\\
\leq& 
\frac{C}{r}\abs{\int_{B(x,2r)}\frac{\vert\nabla_{h}\rho\circ f\vert^{Q-1}\vert D_{h}f\vert^{Q-1}}{(\rho\circ f)^{Q-1}} \vert\log(\varepsilon+\rho\circ f)-c\vert}.\\
\end{aligned}
\end{equation}
Then, by dominated convergence theorem, we have
\begin{equation}
\begin{aligned}
&\abs{\int_{B(x,2r)}\varphi\frac{\vert\nabla_{h}\rho\circ f\vert^{Q}}{(\rho\circ f)^{Q}}J_{f}}\leq\frac{C}{r}\abs{\int_{B(x,2r)}\frac{\vert\nabla_{h}\rho\circ f\vert^{Q-1}\vert D_{h}f\vert^{Q-1}}{(\rho\circ f)^{Q-1}} \vert\log(\rho\circ f)-c\vert}.\\
\end{aligned}
\end{equation}
By H\"{o}lder inequality and an embedding inequality \cite[Theorem 3.2]{VI07}, we get 
\begin{equation}
\begin{aligned}
&\abs{\int_{B(x,2r)}\varphi\frac{\vert\nabla_{h}\rho\circ f\vert^{Q}}{(\rho\circ f)^{Q}}J_{f}} \leq\frac{C}{r}\abs{\int_{B(x,2r)}\frac{\vert\nabla_{h}\rho\circ f\vert^{Q-1}\vert D_{h}f\vert^{Q-1}}{(\rho\circ f)^{Q-1}} \vert\log(\rho\circ f)-c\vert}\\
\leq& 
\frac{C}{r}\left(\int_{B(x,2r)}\left(\frac{\vert\nabla_{h}\rho\circ f\vert\vert D_{h}f\vert}{(\rho\circ f)}\right)^{\frac{Q^{2}}{Q+1}}\right)^{\frac{Q^{2}-1}{Q^{2}}} 
\cdot\left(\int_{B(x,2r)}\vert\log(\rho\circ f)-c\vert^{Q^{2}}\right)^{\frac{1}{Q^{2}}}\\
\leq& 
\frac{C}{r}\left(\int_{B(x,2r)}\left(\frac{\vert\nabla_{h}\rho\circ f\vert\vert D_{h}f\vert}{(\rho\circ f)}\right)^{\frac{Q^{2}}{Q+1}}\right)^{\frac{Q^{2}-1}{Q^{2}}} 
\cdot\left(\int_{B(x,2r)}\left(\frac{\vert\nabla_{h}\rho\circ f\vert\vert D_{h}f\vert}{(\rho\circ f)}\right)^{\frac{Q^{2}}{Q+1}}\right)^{\frac{1+Q}{Q^{2}}}\\
 =& 
\frac{C}{r}\left(\int_{B(x,2r)}\left(\frac{\vert\nabla_{h}\rho\circ f\vert\vert D_{h}f\vert}{(\rho\circ f)}\right)^{\frac{Q^{2}}{Q+1}}\right)^{\frac{Q+1}{Q}}.
\end{aligned}
\end{equation}
Therefore, we get 
\begin{equation}
\begin{aligned}
&\int_{B(x,r)}\left(\frac{\vert\nabla_{h}\rho\circ f\vert\vert D_{h}f\vert}{(\rho\circ f)}\right)^{Q}\\
\leq& \int_{B(x,2r)}\varphi\left(\frac{\vert\nabla_{h}\rho\circ f\vert\vert D_{h}f\vert}{(\rho\circ f)}\right)^{Q}\\
\leq& K\int_{B(x,2r)}\varphi\frac{\vert\nabla_{h}\rho\circ f\vert^{Q}}{(\rho\circ f)^{Q}}J_{f}+\int_{B(x,2r)}\Sigma\\
\leq& 
\frac{C(K)}{r}\left(\int_{B(x,2r)}\left(\frac{\vert\nabla_{h}\rho\circ f\vert\vert D_{h}f\vert}{(\rho\circ f)}\right)^{\frac{Q^{2}}{Q+1}}\right)^{\frac{Q+1}{Q}}+\int_{B(x,2r)}\Sigma.\\
\end{aligned}
\end{equation}
Namely,
\begin{equation}\label{e14}
\begin{aligned}
&\int_{B(x,r)}\!\!\!\!\!\!\!\!\!\!\!\!\!\!\!\!\!\!\!\!\; {}-{} \,\,\,\,\,\,\,\,\left(\frac{\vert\nabla_{h}\rho\circ f\vert\vert D_{h}f\vert}{(\rho\circ f)}\right)^{Q}\leq C(K)\left(\int_{B(x,2r)}\!\!\!\!\!\!\!\!\!\!\!\!\!\!\!\!\!\!\!\!\!\!\!\; {}-{} \,\,\,\,\,\,\,\,\,\,\,\left(\frac{\vert\nabla_{h}\rho\circ f\vert\vert D_{h}f\vert}{(\rho\circ f)}\right)^{\frac{Q^{2}}{Q+1}}\right)^{\frac{Q+1}{Q}}\\
&\,\,\,\,\,\,\,\,\,\,\,\,\,\,\,\,\,\,\,\,\,\,\,\,\,\,\,\,\,\,\,\,\,\,\,\,\,\,\,\,\,\,\,\,\,\,\,\,\,\,\,\,\,\,\,\,\,\,\,\,\,\,\,\,\,\,\,\,\,\,\,\,\,\,\,\,\,\,\,\,\,\,\,\,\,\,\,\,\,\,\,\,\,\,\,\,\,\,\,
\,\,\,\,\,\,\,\,\,\,\,\,+
\int_{B(x,2r)}\!\!\!\!\!\!\!\!\!\!\!\!\!\!\!\!\!\!\!\!\!\!\!\; {}-{} \,\,\,\,\,\,\,\,\,\,\,\Sigma.\\
\end{aligned}
\end{equation}
Let $$q=\frac{Q+1}{Q}, r_{0}=(1+\varepsilon)q>q, g=\left(\frac{\vert\nabla_{h}\rho\circ f\vert\vert D_{h}f\vert}{(\rho\circ f)}\right)^{\frac{Q^{2}}{Q+1}}\;\;\;{\rm and}\;\;\;f=\Sigma^{\frac{Q}{Q+1}}.$$
Then, $f\in L_{loc}^{r_{0}}(\Omega)$ and 
\begin{equation}
\begin{aligned}
&\int_{B(x,r)}\!\!\!\!\!\!\!\!\!\!\!\!\!\!\!\!\!\!\!\!\; {}-{} \,\,\,\,\,\,\,\,\,\,\,g^{q}\leq C(K)\left[\left(\int_{B(x,2r)}\!\!\!\!\!\!\!\!\!\!\!\!\!\!\!\!\!\!\!\!\!\!\!\; {}-{} \,\,\,\,\,\,\,\,\,\,\,\,\,\,g\right)^{q}+
\int_{B(x,2r)}\!\!\!\!\!\!\!\!\!\!\!\!\!\!\!\!\!\!\!\!\!\!\!\; {}-{} \,\,\,\,\,\,\,\,\,\,\,\,\,f^{q}\right].\\
\end{aligned}
\end{equation}
Hence, by a Gehring's lemma on metric measure space \cite[Theorem 3.3]{Gol05}, we see that   there exists $\beta_{1}>1,$ such that $\vert D_{h}\rho\circ f\vert\vert D_{h}f\vert/\rho\circ f\in L_{loc}^{\beta_{1} Q}(U).$ This finishes the proof.

${\rm \mathbf{Step}\;\mathbf{3}}$: Showing that $$g=\log\rho\circ l_{y_{0}^{-1}}\circ f\in L_{loc}^{\infty}(U).$$

By $\mathbf{Step\,2}$, we see that $\vert D_{h}\rho\circ f\vert\vert D_{h}f\vert/\rho\circ f\in L_{loc}^{\beta_{1} Q}(U).$ Without loss of generality, we assume that $\vert D_{h}\rho\circ f\vert\vert D_{h}f\vert/\rho\circ f\in L^{\beta_{1} Q}(U).$ Then, by virtue of  H\"{o}lder inequality, we get
\begin{equation}
\begin{aligned}
&\int_{B(x,r)}\vert \nabla_{h}g\vert^{Q}\\
\leq&\int_{B(x,r)}\frac{\vert D_{h}\rho\circ f\vert^{Q}\vert \nabla_{h}f\vert^{Q}}{(\rho\circ f)^{Q}}\\
\leq& 
\left(\int_{B(x,r)}\left(\frac{\vert D_{h}\rho\circ f\vert\vert \nabla_{h}f\vert}{(\rho\circ f)}\right)^{\beta_{1} Q}\right)^{\frac{1}{\beta_{1}}} \cdot r^{\frac{\beta_{1} Q}{\beta_{1}-1}}\\
\leq& C \cdot r^{\frac{\beta_{1} Q}{\beta_{1}-1}},\\
\end{aligned}
\end{equation}
where $C=\vert\vert\vert D_{h}\rho\circ f\vert\vert D_{h}f\vert/\rho\circ f\vert\vert_{L^{\beta_{1} Q}(U)}.$
Since $g\in W^{1,Q}(U),$ by using  Proposition \ref{09888888111} to $g:\Omega:\rightarrow \mathbb{R}$ for the case that $\alpha=\beta_{1}Q/(\beta_{1}-1)>0$ and $\beta=0,$ we see that $g$ is local H\"{o}lder continuous on $U$. Hence, we completes the proof that $g=\log\rho\circ l_{y_{0}^{-1}}\circ f\in L_{loc}^{\infty}(U).$ 

With above three steps, we see that the conclusion of Lemma \ref{lemma000829992} is correct. This is because  $g=\log\rho\circ l_{y_{0}^{-1}}\circ f\in L_{loc}^{\infty}(U)$ is impossible as $x_{0}\in U$ and hence $\lim_{x\rightarrow x_{0}}\log\rho\circ l_{y_{0}^{-1}}\circ f(x)=-\infty.$ This completely finished the proof of Lemma \ref{lemma000829992}.
\end{proof}

With Lemma \ref{lemma000829992} and the definition of local index. We can proceed to prove the following result.
\begin{thm}\label{900177}
 Let  $K\geq1$ be a given constant. Suppose that $\mathbb{G}$ is a Carnot group of $H$-type,  $y_{0}\in\mathbb{G},$ $\Omega\subset\mathbb{G}$ is a domain and $f:\Omega\rightarrow \mathbb{G}$ is an nonconstant mapping of class $W_{loc}^{1,Q}(\Omega)$ satisfying  the following inequality
$$\vert D_{h}f(x)\vert^{Q}\leq KJ_{f}(x)+\Sigma(x)(\rho\circ l_{y_{0}^{-1}}\circ f(x))^{Q}$$
for almost every $x\in\Omega.$ Suppose that $\Sigma\in L_{loc}^{p}(\Omega)$ for some $p>1,$ then for every $x\in f^{-1}\{y_{0}\},$ the local index $i(x,f)$ is well defined and is always positive.
\end{thm}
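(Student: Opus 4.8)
The plan is to derive the theorem directly from the discreteness of Theorem \ref{theorem9009} together with the degree positivity of Lemma \ref{lemma000829992}; the only genuine work is to identify the local index $i(x_{0},f)$ with the degree ${\rm deg}(f,U)$ over a suitable component $U$ of a small preimage ball. Throughout I work with the continuous representative of $f$ furnished by Corollary \ref{cor20}, which applies here because $K\geq1$ is constant and $\Sigma\in L_{loc}^{p}(\Omega)$ for some $p>1$.

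First I would settle well-definedness. Fix $x_{0}\in f^{-1}\{y_{0}\}$. By Theorem \ref{theorem9009} the set $f^{-1}\{y_{0}\}$ is discrete, so $x_{0}$ is isolated and one may choose $r>0$ with $V:=B(x_{0},r)$ satisfying $\overline{V}\subset\Omega$ and $\overline{V}\cap f^{-1}\{y_{0}\}=\{x_{0}\}$. In particular $y_{0}\notin f(\partial V)$, and since $\overline{V}$ is compact and $f$ is continuous, the restriction of $f$ to $\overline{V}$ is proper; hence the triple $(f,V,y_{0})$ is admissible and ${\rm deg}(f,V,y_{0})$ is defined. By the discussion of the local index in Section \ref{sec2222}, this value is independent of the admissible neighborhood, so $i(x_{0},f)$ is well defined and equals ${\rm deg}(f,V,y_{0})$.

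Next I would identify this value with a degree over a component of a preimage ball. Set $\delta:={\rm dist}(y_{0},f(\partial V))>0$, and use Lemma \ref{9981} together with the measure-shrinking lemma proved above it (the one asserting $\lim_{\varepsilon\to0^{+}}m(U_{\varepsilon})=0$ for the $x_{0}$-component $U_{\varepsilon}$ of $f^{-1}B(y_{0},\varepsilon)$) to choose $\varepsilon\in(0,\delta)$ so small that the $x_{0}$-component $U$ of $f^{-1}B(y_{0},\varepsilon)$ satisfies both $\overline{U}\subset\Omega$ and $m(U)<C$, where $C=C(Q,K,\Sigma,\Omega)$ is the constant of Lemma \ref{987887}. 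Since $\varepsilon<\delta$ we have $\partial V\cap f^{-1}B(y_{0},\varepsilon)=\emptyset$; as $U$ is connected, contains $x_{0}\in V$, and lies in $f^{-1}B(y_{0},\varepsilon)$, it cannot meet $\partial V$, whence $U\subset V$. Because $\overline{V}\cap f^{-1}\{y_{0}\}=\{x_{0}\}\subset U$, the excision property (second bullet of Lemma \ref{lem2.1}, applied with ambient set $V$ and interior open set $U$) gives ${\rm deg}(f,V,y_{0})={\rm deg}(f,U,y_{0})$. Finally, every boundary point $x\in\partial U$ satisfies $f(x)\notin B(y_{0},\varepsilon)$, since otherwise openness of $f^{-1}B(y_{0},\varepsilon)$ would force $x$ into the component $U$; hence $B(y_{0},\varepsilon)$ lies in a single path component of $\mathbb{G}\setminus f(\partial U)$, and the first bullet of Lemma \ref{lem2.1} yields ${\rm deg}(f,U,y_{0})={\rm deg}(f,U)$.

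Combining these identities gives $i(x_{0},f)={\rm deg}(f,U)$, and Lemma \ref{lemma000829992} applies to $U$ (nonempty component with $\overline{U}\subset\Omega$ and $m(U)<C$) to conclude ${\rm deg}(f,U)>0$, hence $i(x_{0},f)>0$. The main obstacle is not any single analytic estimate, since those are all absorbed into Lemma \ref{lemma000829992}; it is the topological bookkeeping of the three degree identities, and in particular the inclusion $U\subset V$ that legitimizes excision. The delicate point is to pick $\varepsilon$ small enough to control $m(U)$ below $C$ while simultaneously keeping $\varepsilon$ below ${\rm dist}(y_{0},f(\partial V))$, which is exactly what the measure-shrinking lemma makes possible.
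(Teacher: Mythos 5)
Your proposal is correct and follows essentially the same route as the paper: discreteness from Theorem \ref{theorem9009} plus the degree positivity of Lemma \ref{lemma000829992}, with the local index identified as the degree over a small $x_{0}$-component of $f^{-1}B(y_{0},\varepsilon)$. The only cosmetic difference is that the paper takes the component $U_{\varepsilon}$ itself (shrunk until $\overline{U_{\varepsilon}}\cap f^{-1}\{y_{0}\}=\{x_{0}\}$) as the neighborhood defining $i(x_{0},f)$, whereas you route through a ball $V$ and excision; both amount to the same bookkeeping.
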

\begin{proof}
According to Lemma \ref{lemma000829992}, we see that there exists a $\varepsilon>0,$ such that there is an non-empty $x$-component $U_{\varepsilon}$ of every $x\in f^{-1}\{y_{0}\},$  with which $U_{\varepsilon}$ is compactly contained in $\Omega$ and satisfying ${\rm deg}(f,U_{\varepsilon})>0.$ In addition, according to Theorem \ref{theorem9009},  we see that $f^{-1}\{y_{0}\}$ is discrete.
 Hence, we can take $\varepsilon$ to be small enough, such that $\overline{U_{\varepsilon}}\cap f^{-1}\{y_{0}\}=\{x\}.$ Hence, the local index $i(x,f)$ is well defined and $i(x,f)={\rm deg}(f,U_{\varepsilon})>0.$ This finishes the proof.
\end{proof}

\section{Openness type result for quasiregular values}\label{sec7}
In this section, we will show the  openness type result for quasiregular values on Carnot group of $H$-type. We have the following result.
\begin{thm}
Let $K\geq1$ be a given constant. Suppose that $\mathbb{G}$ is a Carnot group of $H$-type,  $y_{0}\in\mathbb{G},$ $\Omega\subset\mathbb{G}$ is a domain and $f:\Omega\rightarrow \mathbb{G}$ is an nonconstant mapping of class $W_{loc}^{1,Q}(\Omega)$ satisfying  the following inequality
$$\vert D_{h}f(x)\vert^{Q}\leq KJ_{f}(x)+\Sigma(x)(\rho\circ l_{y_{0}^{-1}}\circ f(x))^{Q}$$
for almost every $x\in\Omega.$ Suppose that $\Sigma\in L_{loc}^{p}(\Omega)$ for some $p>1,$ then every neighborhood $V$ of every  $x\in f^{-1}\{y_{0}\},$ we have  $y_{0}\in{\rm int}f(V).$
\end{thm}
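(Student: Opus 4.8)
The plan is to obtain openness as a direct consequence of the positivity of the local index established in Theorem \ref{900177}, combined with the elementary solvability property of the topological degree. Fix a point $x_{0}\in f^{-1}\{y_{0}\}$ and let $V$ be an arbitrary neighborhood of $x_{0}$. Since $f$ has a continuous representative by Corollary \ref{cor20} and $f^{-1}\{y_{0}\}$ is discrete by Theorem \ref{theorem9009}, I would first shrink $V$ to a smaller open neighborhood $W$ of $x_{0}$ with $W\subset V$, $\overline{W}\subset\Omega$, and $\overline{W}\cap f^{-1}\{y_{0}\}=\{x_{0}\}$; concretely one may take $W=B(x_{0},s)$ with $s$ small enough, using that $x_{0}$ is isolated in $f^{-1}\{y_{0}\}$. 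Because $\overline{W}$ is compact, $f|_{\overline{W}}$ is proper, and since $x_{0}\in W$ while no point of $\partial W$ lies in $f^{-1}\{y_{0}\}$, we have $y_{0}\notin f(\partial W)$. Thus the triple $(f,W,y_{0})$ is admissible, and by the definition of the local index recalled before Section \ref{sec255555}, $\operatorname{deg}(f,W,y_{0})=i(x_{0},f)$.

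Next I would invoke Theorem \ref{900177} to conclude that $\operatorname{deg}(f,W,y_{0})=i(x_{0},f)>0$. As $f(\partial W)$ is compact and $y_{0}\notin f(\partial W)$, there exists $r>0$ with $B(y_{0},r)\cap f(\partial W)=\emptyset$, so that $B(y_{0},r)$ is contained in a single path component of $\mathbb{G}\setminus f(\partial W)$. By the invariance of the degree on path components recorded in Lemma \ref{lem2.1}, the map $y\mapsto\operatorname{deg}(f,W,y)$ is constant there, whence
$$
\operatorname{deg}(f,W,y)=\operatorname{deg}(f,W,y_{0})=i(x_{0},f)>0
$$
for every $y\in B(y_{0},r)$.

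Finally I would use the solvability property of the degree: a nonzero value of $\operatorname{deg}(f,W,y)$ forces $f^{-1}\{y\}\cap W\neq\emptyset$. Indeed, if $y\notin f(\overline{W})$ one could choose an admissible smooth approximation with a regular value having empty preimage in $W$, giving degree zero; and since $y\notin f(\partial W)$, the only remaining possibility $y\in f(\overline{W})$ already yields $y\in f(W)$. Consequently every $y\in B(y_{0},r)$ belongs to $f(W)$, so
$$
B(y_{0},r)\subset f(W)\subset f(V),
$$
and therefore $y_{0}\in B(y_{0},r)\subset{\rm int}\,f(V)$, which is precisely the assertion. The only genuinely nontrivial ingredient is the strict positivity $i(x_{0},f)>0$ supplied by Theorem \ref{900177}; once that is available, the remaining steps are routine degree theory, and I expect no serious obstacle beyond carefully verifying the admissibility of the triples $(f,W,y)$ and the discreteness-based choice of $W$.
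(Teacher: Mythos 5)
Your proof is correct, and it is genuinely more direct than the paper's. The paper argues by contradiction: assuming some neighborhood $V_{0}$ of a point $x_{0}\in f^{-1}\{y_{0}\}$ has $B(y_{0},\varepsilon)\setminus f(V_{0})\neq\emptyset$ for all $\varepsilon>0$, it works with the $x_{0}$-components $U_{\varepsilon}$ of $f^{-1}B(y_{0},\varepsilon)$ (which, unlike your $W$, need not be contained in $V_{0}$), proves $f(U_{\varepsilon})=B(y_{0},\varepsilon)$ via exactly the same two degree facts you use (positivity from Theorem \ref{900177} and solvability, cited there from \cite[Theorem 2.1]{FG95}), and then derives the contradiction from a nested-compact-set argument, $\bigcap_{0<\varepsilon<\varepsilon_{1}}\overline{U_{\varepsilon}}\setminus V_{0}=\{x_{0}\}\setminus V_{0}=\emptyset$. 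You instead exploit the fact that the local index is independent of the choice of neighborhood: by discreteness (Theorem \ref{theorem9009}) you may take a metric ball $W=B(x_{0},s)\subset V$ with $\overline{W}\cap f^{-1}\{y_{0}\}=\{x_{0}\}$, identify $\operatorname{deg}(f,W,y_{0})=i(x_{0},f)>0$, and conclude by constancy of the degree on the path component $B(y_{0},r)$ of $\mathbb{G}\setminus f(\partial W)$ plus solvability. This buys a shorter argument with no contradiction and no compactness bookkeeping; the paper's component-based route, on the other hand, yields the sharper structural byproduct $f(U_{\varepsilon})=B(y_{0},\varepsilon)$ (images of preimage components are full balls), which matches how the earlier lemmas (e.g.\ Lemma \ref{lemma000829992}) are phrased. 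Two small points you implicitly use and should note: $B(y_{0},r)$ is path-connected because the Carnot--Carath\'{e}odory metric is a length metric, so Lemma \ref{lem2.1} applies; and the solvability step is exactly the statement the paper quotes from \cite[Theorem 2.1]{FG95}, so you can cite it rather than re-derive it by smooth approximation.
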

\begin{proof}
We must prove that for every neighborhood $V$ of every  $x\in f^{-1}\{y_{0}\},$ there exists a $\varepsilon>0,$ such that $B(y_{0},\varepsilon)\subset f(V).$ If there is no such $\varepsilon,$ then there must exist 
$x_{0}\in f^{-1}\{y_{0}\},$ such that for an neighborhood $V_{0}$ of $x_{0},$ we have $B(y_{0},\varepsilon)\setminus f(V_{0})\neq \emptyset$ for all $\varepsilon>0.$ 

On the other hand, according to Lemma \ref{lemma000829992}, we see that there exists a $\varepsilon_{1}>0,$ such that an non-empty $x_{0}$-component $U_{\varepsilon_{1}}$ of $f^{-1}B(y_{0},\varepsilon_{1})$ satisfying that $\overline{U_{\varepsilon_{1}}}\subset\Omega,$ $\overline{U_{\varepsilon_{1}}}\cap f^{-1}\{y_{0}\}=\{x_{0}\}$ and ${\rm deg}(f,U_{\varepsilon_{1}})>0.$ 

Now, for  $0<\varepsilon<\varepsilon_{1},$ suppose that $U\varepsilon$ is the $x_{0}$-componentvof $f^{-1}B(y_{0},\varepsilon)$ such that $U_{\varepsilon}$ is compactly contained in $\Omega.$ Then, we must have the equation
$f(U_{\varepsilon})=B(y_{0},\varepsilon).$ This is because for every $y_{1}\in B(y_{0},\varepsilon),$ we must have ${\rm deg}(f,U_{\varepsilon})={\rm deg}(f,U_{\varepsilon},y_{1})>0$ according to Theorem \ref{900177}.
Now, if $y_{1}\notin f(U_{\varepsilon}),$ then by \cite[Theorem 2.1]{FG95}, we must ${\rm deg}(f,U_{\varepsilon})\leq0.$ Hence, we must have $y_{1}\in f(U_{\varepsilon}).$ Namely, $f(U_{\varepsilon})=B(y_{0},\varepsilon).$

With above conclusions,  we have $B(y_{0},\varepsilon)\setminus f(V_{0})=f(U_{\varepsilon})\setminus f(V_{0})\neq \emptyset.$ So we have
$U_{\varepsilon}\setminus V_{0}\neq \emptyset.$ Hence, $\overline{U_{\varepsilon}}\setminus V_{0}$ is non-empty compact subset of $\Omega,$ where $0<\varepsilon<\varepsilon_{1},$ which implies that their
intersection is non-empty. This is contradictory since  $$\cap_{0<\varepsilon<\varepsilon_{1}}\overline{U_{\varepsilon}}\setminus V_{0}=\{x_{0}\}\setminus V_{0}= \emptyset.$$ This finishes the proof.
\end{proof}



\section{Acknowledge}
This work was supported by Guangdong Basic and Applied Basic Research Foundation (No. 2022A1515110967 and No. 2023A1515011809) and Supporting Foundation of Shenzhen Polytechnic University (No. 6024310047K).


\begin{flushleft}
 $\mathbf{Data\;availability}.$  This paper has no associated data.
\end{flushleft}

\end{document}